\numberwithin{equation}{section}
\newtheorem{thm}{Theorem}[section]
\newtheorem{lemma}[thm]{Lemma}
\newtheorem{prop}[thm]{Proposition}
\newtheorem{cor}[thm]{Corollary}
{\theorembodyfont{\rmfamily}
\newtheorem{defn}[thm]{Definition}

\newtheorem{rmk}[thm]{Remark}
}
\newcommand{\qed}{\hfill \mbox{\raggedright \rule{.07in}{.1in}}}
\newenvironment{proof}{\vspace{1ex}\noindent{\bf
Proof}\hspace{0.5em}}{\hfill\qed\vspace{1ex}}
\newenvironment{pfof}[1]{\vspace{1ex}\noindent{\bf Proof of
#1}\hspace{0.5em}}{\hfill\qed\vspace{1ex}}
\newcommand{\R}{{\mathbb R}}
\newcommand{\Z}{{\mathbb Z}}
\newcommand{\N}{{\mathbb N}}
\newcommand{\E}{{\mathbb E}}
\renewcommand{\P}{{\mathbb P}}
\newcommand{\cB}{{\mathcal B}}
\newcommand{\cG}{{\mathcal G}}
\newcommand{\cM}{{\mathcal M}}
\newcommand{\supI}{{\SMALL \sup_{[0,1]}}}
\newcommand{\tV}{{\widetilde V}}
\newcommand{\bW}{{\overline W}}
\newcommand{\eps}{\epsilon}
\newcommand{\Lip}{{\operatorname{Lip}}}
\newcommand{\dist}{\operatorname{dist}}
\newcommand{\SMALL}{\textstyle}
\newcommand{\BIG}{\displaystyle}
\title{Rate of Convergence in the Weak Invariance Principle
for Deterministic Systems}
\author{
Marios Antoniou
\thanks{Mathematics Institute, University of Warwick, Coventry, CV4 7AL, UK}
\and 
Ian Melbourne\thanks{Mathematics Institute, University of Warwick, Coventry, CV4 7AL, UK}
}
\date{4 June 2018, revised 15 October 2018}
\begin{document}

 \maketitle

\begin{abstract}
We obtain the first results on convergence rates in the Prokhorov metric for the weak invariance principle (functional central limit theorem) for deterministic dynamical systems.  Our results hold for uniformly expanding/hyperbolic (Axiom A) systems, as well as nonuniformly expanding/hyperbolic  systems such as dispersing billiards, H\'enon-like attractors, Viana maps and intermittent maps.  As an application, we obtain convergence rates for deterministic homogenization in multiscale systems.
\end{abstract}

 \section{Introduction} 
 \label{sec:intro}

There is considerable interest in proving statistical properties for large classes of dynamical systems.  The central limit theorem (CLT) was proved for uniformly hyperbolic (Axiom~A) diffeomorphisms and flows in~\cite{Ratner73} and for various nonuniformly expanding/hyperbolic maps in~\cite{HofbauerKeller82}.\footnote{For the remainder of the introduction, we write (non)uniformly hyperbolic rather than (non)uniformly expanding/hyperbolic.}
The latter reference also established the weak invariance principle (WIP), otherwise known as the functional CLT, generalizing the classical result of Donsker~\cite{Donsker51} for independent and identically distributed random variables.
See also~\cite{DenkerPhilipp84} for the WIP for uniformly hyperbolic flows.
More recently, the WIP was obtained for large classes of nonuniformly hyperbolic maps and flows~\cite{BMsub,Gouezel04,MN05,MV16,MZ15}.
The WIP was applied in~\cite{GM13b,MS11} to obtain results on homogenization for deterministic fast-slow systems.

An important question, especially bearing in mind applications to fast-slow systems, is to obtain convergence rates for these statistical limit laws.
In the case of the CLT, these convergence rates are called
Berry-Esseen estimates;   sharp results have been obtained for uniformly hyperbolic diffeomorphisms~\cite{CoehloParry90} and nonuniformly hyperbolic systems~\cite{Gouezel05}.
However, there are few results on convergence rates in the WIP for dependent random variables in the probability theory literature and apparently none in the dynamical systems literature.  
In this paper, we obtain convergence rates in the WIP for uniformly and nonuniformly hyperbolic dynamical systems.  

For uniformly hyperbolic dynamical systems (including Axiom~A diffeomorphisms but also maps with infinitely many branches such as the Gauss map), we obtain the convergence rate $n^{-(\frac14-\delta)}$ in the WIP for $\delta$ arbitrarily small.  This result also applies to certain nonuniformly hyperbolic systems (those modelled by a Young tower with exponential tails~\cite{Young98}) such as dispersing planar periodic billiards, unimodal maps, and H\'enon-like maps~\cite{BenedicksYoung00}.

More generally, for nonuniformly hyperbolic systems the rate depends on the degree of nonuniformity.  As an indicative example, we consider
Pomeau-Manneville intermittent maps~\cite{PomeauManneville80}, specifically the map
\begin{align} \label{eq:LSV}
T:[0,1]\to [0,1], \qquad T(x)=\begin{cases} x(1+2^\gamma x^\gamma) & x\in[0,\frac12) \\ 2x-1 & x\in[\frac12,1] \end{cases},
\end{align} 
 studied in~\cite{LSV99}.
Here $\gamma>0$ is a real parameter and there is a unique ergodic absolutely continuous invariant probability measure $\mu$ for $\gamma<1$.  Moreover, H\"older observables satisfy the CLT and WIP provided $\gamma<\frac12$.
By~\cite{Gouezel05}, the convergence rate in the CLT is $\begin{cases}  n^{-\frac12} &
\gamma\in(0,\frac13) \\ 
 n^{-(\frac{1}{2\gamma}-1)} &
\gamma\in(\frac13,\frac12) \end{cases}$.
For the WIP, 
we obtain the rate $n^{-(\frac14(1-2\gamma)-\delta)}$,
$\gamma\in(0,\frac12)$.

\begin{rmk}
(a) The closest previous result that we could find for dynamical systems is due to Grama {\em et al.}~\cite{GramaPagePeigne14}.  Their method, 
 is based on a result of Gou\"ezel~\cite{Gouezel10} which applies to dynamical systems with spectral gaps, so it is plausible that~\cite{GramaPagePeigne14} yields the convergence rate $n^{-(\frac14-\delta)}$ in the WIP for uniformly hyperbolic maps (though no such claim is explicitly made in their paper).
The results in~\cite{GramaPagePeigne14} do not apply to nonuniformly hyperbolic systems such as~\eqref{eq:LSV}.
\\[0.75ex]
(b) The convergence rates that we obtain are certainly not optimal, but this is the typical situation even in the probability literature as soon as one moves outside of the iid setting, see Remark~\ref{rmk:optimal} and references therein.
\end{rmk}

Next, we consider applications to fast-slow systems of the form
\begin{align} \label{eq:fastslow}
x_\eps(n+1)=x_\eps(n)+\eps^2a_\eps(x_\eps(n),y(n))+\eps b(x_\eps(n))v(y(n)), \quad x_\eps(0)=\xi, 
\end{align}
where $a_\eps:\R\times \Lambda\to\R$, $b:\R\to\R$, $v:\Lambda\to\R$ satisfy 
mild conditions, $\xi\in\R$, and the fast variables $y(n)\in\Lambda$ are generated by iterating a nonuniformly hyperbolic dynamical system.
In~\cite{GM13b}, it was shown that the slow variables $x_\eps(n)$, suitably scaled in time, converge to the solution of the stochastic differential equation (SDE)
given in~\eqref{eq:XSDE}.
In this paper, we obtain the first estimates of the rate of convergence.
For uniformly hyperbolic fast dynamics, we obtain the convergence rate $\eps^{\frac13-\delta}$ (note that~$\eps$ is identified with $n^{-\frac12}$  so this corresponds to $n^{-(\frac16-\delta)}$).  We also obtain convergence rates for nonuniformly hyperbolic fast dynamics including the intermittent map~\eqref{eq:LSV} for all $\gamma\in(0,\frac12)$.  Moreover, for 
$\gamma\in[\frac{1}{12}(11-\sqrt{73}),\frac12)$ 
we obtain the same rate $\eps^{\frac12(1-2\gamma)-\delta}$ as in the WIP.

\begin{rmk}
This paper is based on results of the first author in his Ph.\ D. thesis~\cite{AntoniouPhD} which concentrates on uniformly expanding maps but via a method which, as demonstrated in this paper, generalizes to nonuniformly hyperbolic transformations.  
\end{rmk}

The remainder of the paper is organized as follows.  In Section~\ref{sec:main}, we define a class of nonuniformly expanding maps and state precisely our results on convergence rates in the WIP and for homogenization of fast-slow systems.
In Section~\ref{sec:KKM}, we summarize some recent results of~\cite{KKM18} on martingale approximations.
The main result for the WIP is proved in Section~\ref{sec:WIP} and
the main result for fast-slow systems is proved in Section~\ref{sec:fastslow}.
The extension to nonuniformly hyperbolic systems is covered in Section~\ref{sec:NUH}.
In Section~\ref{sec:ex}, we discuss examples where our results apply.

\vspace{-2ex}
\paragraph{Notation}

We use ``big O'' and $\ll$ notation interchangeably, writing $a_n=O(b_n)$ or $a_n\ll b_n$
if there is a constant $C>0$ such that
$a_n\le Cb_n$ for all $n\ge1$.

Recall that $v:\Lambda\to\R$ is a H\"older observable on a metric space $(\Lambda,d)$ with exponent $\eta\in(0,1]$, written $v\in C^\eta(\Lambda)$, if
  $\|v\|_{\eta} = |v|_\infty + |v|_\eta<\infty$ where $|v|_\infty=\sup_\Lambda|v|$ and
$\BIG |v|_\eta= \sup_{x\neq x'} \frac{|v(x)-v(x')|}{d(x,x')^\eta}$.

We denote by $C[0,1]$ the Banach space of continuous functions on $[0,1]$ equipped with the supnorm.

\section{Statement of the main results}
\label{sec:main}

Let $(\Lambda,d)$ be a bounded metric space with Borel probability measure $\rho$ and let $T:\Lambda\to\Lambda$ be a nonsingular transformation (so $\rho(T^{-1}E)=0$ if and only if $\rho(E)=0$ for all Borel sets $E\subset \Lambda$).
We assume that $\rho$ is ergodic ($\rho(E)=0$ or $1$ for all Borel sets $E\subset \Lambda$ with $T^{-1}E=E$).

Suppose that $Y\subset \Lambda$ is a subset of positive measure, and that $\alpha$ is an at most countable measurable partition of $Y$. 
Let $\tau:Y\to\Z^+$ be an integrable function, constant on partition elements, and define $F(y)=T^{\tau(y)}(y)$.  We assume that $FY\subset Y$; then $\tau$ is called a {\em return time} and $F:Y\to Y$ is the corresponding {\em induced map}.

We suppose that that there are constants $\lambda>1$, $\eta\in(0,1]$, $C>0$ such that for all $a\in\alpha$, $y,y'\in a$, $0\le\ell\le\tau(a)-1$,
\begin{itemize}
\item[(i)] $F=T^\tau$ restricts to a measure-theoretic bijection from $a$ onto $Y$.
\item[(ii)] $d(Fy,Fy')\ge \lambda d(y,y')$.
\item[(iii)] $g=d\rho|_Y/d\rho|_Y\circ F$ satisfies 
$|\log g(y)-\log g(y')|\le Cd(Fy,Fy')^\eta$.
\item[(iv)] $d(T^\ell y,T^\ell y')\le C d(Fy,Fy')$.
\end{itemize}
Such a dynamical system $T$ is called {\em nonuniformly expanding}.  
We say that 
$T$ is {\em nonuniformly expanding of order $p$} if $\tau\in L^p$.
A standard consequence of (i)--(iii) is that there is a unique 
absolutely continuous ergodic $T$-invariant probability measure~$\mu$ on $\Lambda$.

Let $C^\eta_0(\Lambda)=\{v\in C^\eta(\Lambda):\int_\Lambda v\,d\mu=0\}$.
Given $v\in C^\eta_0(\Lambda)$, we
define $v_n=\sum_{j=0}^{n-1}v\circ T^j$ for $n\ge1$.  Also, define 
\[
W_n(t)=n^{-\frac12}v_{nt},\quad\text{for $\SMALL t=\frac{j}{n}$, $0\le j\le n$},
\]
and linearly interpolate to obtain a 
process $W_n\in C[0,1]$.
The following result is well-known,
see for example~\cite{Gouezel04,KKM18,MN05}:
\begin{lemma} \label{lem:CLT}
Suppose that $T:\Lambda\to\Lambda$ is nonuniformly expanding of order $2$ and let $v\in C^\eta_0(\Lambda)$.  
Then
\begin{itemize}
\item[(a)] 
The limit $\sigma^2=\lim_{n\to\infty}n^{-1}\int_\Lambda v_n^2\,d\mu$ exists.
If in addition $\gcd\{\tau(a):a\in\alpha\}=1$ (guaranteeing that $T$ is mixing), then $\sigma^2$ is given by the absolutely summable series
\begin{align} \label{eq:GK}
\sigma^2=\int_\Lambda v^2\,d\mu+2\sum_{n=1}^\infty 
\int_\Lambda v\,v\circ T^n\,d\mu.
\end{align}
\item[(b)] Typically $\sigma^2>0$.  Indeed, there is a closed subspace $S\subset C^\eta_0(\Lambda)$ of infinite codimension such that $\sigma^2>0$ whenever $v\not\in S$.
\item[(c)] The CLT holds: $n^{-\frac12}v_n\to_d N(0,\sigma^2)$ as 
$n\to\infty$ on the probability space $(\Lambda,\mu)$.
\item[(d)]  The WIP holds: $W_n\to_w W$ in $C[0,1]$ as 
$n\to\infty$ on the probability space $(\Lambda,\mu)$, where
$W$ is Brownian motion with variance $\sigma^2$. \qed
\end{itemize}
\end{lemma}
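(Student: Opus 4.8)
The plan is to reduce everything to the induced map $F=T^\tau\colon Y\to Y$, which conditions (i)--(iii) render a uniformly expanding Gibbs--Markov map (expansion by $\lambda$, bounded distortion from the regularity of $g$), establish the martingale CLT/WIP there, and transfer back to $T$ by the standard un-inducing argument, following \cite{Gouezel04,MN05}. Let $\mu_Y$ be the unique $F$-invariant probability measure on $Y$ absolutely continuous with respect to $\rho$; then $\mu$ is recovered from $\mu_Y$ by saturation and $\int_Y\tau\,d\mu_Y=\mu(Y)^{-1}$ by Kac's formula. Define the induced observable $\widetilde v\colon Y\to\R$, $\widetilde v(y)=\sum_{\ell=0}^{\tau(y)-1}v(T^\ell y)$; note $\int_Y\widetilde v\,d\mu_Y=0$. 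Since $|\widetilde v|\le|v|_\infty\tau$ and $\tau\in L^2(\mu_Y)$ (this is where ``order $2$'' enters), we get $\widetilde v\in L^2(\mu_Y)$, and condition (iv) together with $v\in C^\eta(\Lambda)$ gives the Lipschitz-type control of $\widetilde v$ along the refined partition $\bigvee_{k\ge0}F^{-k}\alpha$ that the Gibbs--Markov machinery requires.

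Because $F$ is Gibbs--Markov, its transfer operator $P$ (with respect to $\mu_Y$) has a spectral gap on a suitable space of Hölder/Lipschitz functions, so $\sum_{n\ge1}\|P^n\widetilde v\|_{L^2(\mu_Y)}<\infty$. Gordin's method then produces $\widetilde\chi=\sum_{n\ge1}P^n\widetilde v\in L^2(\mu_Y)$ and a martingale--coboundary decomposition $\widetilde v=\widetilde m+\widetilde\chi\circ F-\widetilde\chi$ with $P\widetilde m=0$, so that $\{\widetilde m\circ F^k\}_{k\ge0}$ is a stationary ergodic reverse martingale difference sequence with $\int_Y\widetilde m^2\,d\mu_Y=\widetilde\sigma^2$. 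Billingsley's martingale CLT, and its functional version, then give the WIP for $F$: the piecewise-linear process built from $\widetilde v_N=\sum_{k=0}^{N-1}\widetilde v\circ F^k$ converges weakly in $C[0,1]$ to Brownian motion of variance $\widetilde\sigma^2$ on $(Y,\mu_Y)$, and likewise $N^{-1/2}\widetilde v_N\to_d N(0,\widetilde\sigma^2)$.

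It remains to un-induce. Put $\sigma^2=\mu(Y)\,\widetilde\sigma^2$. Writing $\tau_N=\sum_{k=0}^{N-1}\tau\circ F^k$ and $N(n,y)=\#\{0\le k<n:T^ky\in Y\}$, the ergodic theorem gives $\tau_N/N\to\mu(Y)^{-1}$ and hence $N(n,\cdot)/n\to\mu(Y)$ a.e., so the time change matches. One then compares $W_n$ with the reparametrised induced process; the errors come from the fluctuation of $v$ across a single excursion, controlled by $\max_{k\le N}\tau\circ F^k$, and from the coboundary term $\widetilde\chi$. Since $\tau\in L^2$, a Borel--Cantelli argument gives $\max_{k\le N}\tau\circ F^k=o(\sqrt N)$ a.e., and since $\widetilde\chi\in L^2$ the analogous bound controls its contribution; this upgrades the induced WIP to $W_n\to_w W$ on $(Y,\mu_Y)$, and absolute continuity of $\mu$ (so $\mu|_Y\ll\mu_Y$, together with the fact that $\Lambda\setminus Y$ is swept into $Y$ under forward iteration) transfers the statement to $(\Lambda,\mu)$. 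This yields (a), (c), (d); the Green--Kubo formula \eqref{eq:GK} follows from the resulting summability of the correlations $\int_\Lambda v\,v\circ T^n\,d\mu$ once mixing is assumed via $\gcd\{\tau(a):a\in\alpha\}=1$. For (b), $\sigma^2=0$ forces $\widetilde m=0$, i.e.\ $\widetilde v$ is an $L^2$ coboundary for $F$, equivalently $v$ is (after lifting) an $L^2$ coboundary for $T$; the set $S\subset C^\eta_0(\Lambda)$ of such $v$ is a closed subspace, and it has infinite codimension because each periodic orbit of $T$ supplies a nontrivial bounded linear functional (the normalised orbit sum, a Livšic obstruction) vanishing on $S$, and distinct periodic orbits give linearly independent functionals.

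The main obstacle is the un-inducing step: quantitatively relating $W_n$ to the induced functional CLT requires that the longest excursion up to the $N$-th return be negligible at scale $\sqrt n$, i.e.\ $\max_{k\le N}\tau\circ F^k=o(\sqrt N)$, together with tightness of the reparametrised process. This is exactly where the order-$2$ hypothesis is consumed, and it is the one genuinely delicate point --- everything upstream (the Gibbs--Markov spectral gap, Gordin's decomposition, the martingale WIP) is by now standard.
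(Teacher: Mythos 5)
The paper gives no proof of this lemma---it is cited as well known from \cite{Gouezel04,KKM18,MN05}---and your sketch is precisely the standard inducing / Gordin martingale-coboundary / un-inducing route used in those references, so the approach is essentially the paper's and the outline is correct. The one point you elide that actually requires an argument is that $\widetilde v$ has cylinder-wise Lipschitz constant and sup-norm growing like $\tau(a)$, so $\sum_{n\ge1}\|P^n\widetilde v\|<\infty$ is not a direct corollary of the Gibbs--Markov spectral gap on bounded Lipschitz observables; one must first check that $P\widetilde v$ itself lies in that space, using the distortion bound (iii) and $\tau\in L^1$ to sum the contributions over cylinders, before the spectral gap can be invoked for $n\ge2$.
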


\subsection{Rates in the WIP}

So far, we assumed only that $\tau\in L^2$.
For $\tau\in L^p$, $p>2$, Gou\"ezel~\cite{Gouezel05} obtained convergence rates (Berry-Esseen estimates) in the CLT.  
Our first main result is a convergence rate in the WIP.

The Prokhorov metric $\pi_1$ is given by
\[
\pi_1(X,Y)=\inf\{\eps>0:\P(X\in A)\le \P(Y\in A^\eps)+\eps\, \;\text{for all closed $A\in\cB$}\}.
\]
Here, $\cB$ is the Borel $\sigma$-algebra on $C[0,1]$ and
$A^\eps$ is the $\eps$-neighbourhood of $A$.
The WIP in Lemma~\ref{lem:CLT}(d) can be rewritten as $\lim_{n\to\infty}\pi_1(W_n,W)=0$.

\begin{thm} \label{thm:WIP}
Let $T:\Lambda\to\Lambda$ be nonuniformly expanding of order \mbox{$p>2$} and
suppose that $v\in C^\eta_0(\Lambda)$.  
Then there is a constant $C>0$ such that
 $\pi_1(W_n,W)\le C n^{-r(p)}$ for all $n\ge1$,
where $r(p)=\frac{p-2}{4p}$.
\end{thm}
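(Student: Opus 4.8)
The plan is to combine the martingale approximation of~\cite{KKM18} (summarised in Section~\ref{sec:KKM}) with a quantitative martingale invariance principle obtained via Skorokhod embedding, carefully tracking the dependence on the moment exponent $p$. Since $T$ is nonuniformly expanding of order $p>2$, by~\cite{KKM18} there exist $m,\chi\in L^p(\Lambda,\mu)$ with $Pm=0$ ($P$ the transfer operator of $T$ with respect to $\mu$) such that $v=m+\chi\circ T-\chi$; consequently $v_n=M_n+\chi\circ T^n-\chi$ with $M_n=\sum_{j=0}^{n-1}m\circ T^j$, and after the standard time reversal $(M_k)_{0\le k\le n}$ is an $L^p$-martingale for each fixed $n$. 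Write $\widetilde W_n\in C[0,1]$ for the process built from $M$ exactly as $W_n$ is built from $v_n$; by the triangle inequality for $\pi_1$ it suffices to bound $\pi_1(W_n,\widetilde W_n)$ and $\pi_1(\widetilde W_n,W)$. For the first I would use the elementary consequence of Markov's inequality that $\pi_1(X,Y)\le\bigl\|\,\|X-Y\|_\infty\,\bigr\|_{L^p}^{p/(p+1)}$: since $v_k-M_k=\chi\circ T^k-\chi$ and the processes are piecewise linear, $\|W_n-\widetilde W_n\|_\infty\le 2n^{-1/2}\max_{0\le k\le n}|\chi\circ T^k|$, and the union bound in $L^p$ gives $\bigl\|\max_{0\le k\le n}|\chi\circ T^k|\bigr\|_{L^p}\ll n^{1/p}$; hence $\pi_1(W_n,\widetilde W_n)\ll n^{-(p-2)/(2(p+1))}\le n^{-r(p)}$, so this term is harmless.

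For the martingale term I would pass to an enlarged probability space and embed $(M_k)$ into a standard Brownian motion $B$ by stopping times $0=\mathfrak t_0\le\mathfrak t_1\le\cdots$ such that $(B(\mathfrak t_k))_k$ has the law of $(M_k)_k$, $\E[\mathfrak t_k-\mathfrak t_{k-1}\mid\cdot]=\E[(m\circ T^{k-1})^2\mid\cdot]$, and $\|\mathfrak t_k-\mathfrak t_{k-1}\|_{L^{p/2}}\ll\|m\|_{L^p}^2$ by Burkholder--Davis--Gundy. Realising the target Brownian motion as $W(t)=n^{-1/2}B(n\sigma^2 t)$ on the same space ($\sigma^2$ as in Lemma~\ref{lem:CLT}), one obtains --- up to a discretisation and interpolation error estimated exactly as above --- $\|\widetilde W_n-W\|_\infty\ll\omega_n(\Delta_n)$, where $\omega_n$ is the modulus of continuity of $W$ and $\Delta_n=\sup_{t\in[0,1]}\bigl|n^{-1}\mathfrak t_{\lfloor nt\rfloor}-\sigma^2 t\bigr|$. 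It then remains to control $\Delta_n$: write $\mathfrak t_k=\langle M\rangle_k+N_k$, where $\langle M\rangle_k=\sum_{j=1}^{k}h\circ T^{j}$ with $h=P(m^2)$ (so $\int_\Lambda h\,d\mu=\int_\Lambda m^2\,d\mu=\sigma^2$) and $N_k$ is a martingale with increments in $L^{p/2}$. Doob's inequality (together with the von Bahr--Esseen moment bound when $2<p<4$) controls $\bigl\|\max_{k\le n}|N_k|\bigr\|_{L^{p/2}}$, while $\langle M\rangle_k-k\sigma^2=\sum_{j=1}^{k}(h-\sigma^2)\circ T^j$ is a Birkhoff sum of the mean-zero observable $h-\sigma^2\in L^{p/2}$, which I would control --- via a further martingale--coboundary decomposition of $h-\sigma^2$ and the decay-of-correlations and maximal-inequality estimates for order-$p$ nonuniformly expanding maps recalled in Section~\ref{sec:KKM} (standard tower technology) --- by $\bigl\|\max_{k\le n}|\langle M\rangle_k-k\sigma^2|\bigr\|_{L^{p/2}}\ll n^{1-\theta(p)}$ for an explicit $\theta(p)>0$. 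Thus $\|\Delta_n\|_{L^{p/2}}\ll n^{-\theta(p)}$ up to logarithmic factors; choosing $\delta>0$ so that both $\P(\Delta_n>\delta')\ll n^{-\theta(p)\,p/2}\delta'^{-p/2}$ and the Gaussian tail bound $\P(\omega_n(\delta')>\delta)$ are at most $\tfrac12\delta$ with $\delta'\sim\delta^2/\log(1/\delta)$, and recalling that $\pi_1(\widetilde W_n,W)\le\delta$ whenever $\P(\|\widetilde W_n-W\|_\infty>\delta)\le\delta$, gives $\pi_1(\widetilde W_n,W)\ll n^{-r(p)}$ with $r(p)=(p-2)/(4p)$ after the arithmetic, completing the proof.

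I expect the main obstacle to be precisely the last control described: extracting a genuine power saving $n^{-\theta(p)}$ for the random time change $\sup_{t\in[0,1]}|n^{-1}\mathfrak t_{\lfloor nt\rfloor}-\sigma^2 t|$, equivalently for the Birkhoff sums of $P(m^2)-\sigma^2$. This requires the correct dynamical input --- decay of correlations, in the appropriate $L^{p/2}$ sense, for this only $L^{p/2}$-integrable (though suitably regular) observable over an order-$p$ nonuniformly expanding map --- together with a maximal inequality to make the estimate uniform in $t$; the remaining ingredients (the two martingale--coboundary decompositions, Doob's and Burkholder's inequalities, the Skorokhod embedding, the Gaussian modulus-of-continuity bound) are routine. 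It is the combination of the merely polynomial decay available at moment exponent $p$ with the two unavoidable square-root losses --- from the Brownian modulus of continuity and from Markov's inequality in passing to the Prokhorov metric --- that degrades the final exponent to $r(p)=(p-2)/(4p)$ rather than something larger, and one does not expect this to be optimal.
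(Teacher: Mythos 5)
Your plan shares the paper's two main structural ingredients --- the martingale--coboundary decomposition of~\cite{KKM18} and a Skorokhod-embedding--based quantitative invariance principle for the martingale part --- and in that sense it is essentially the same strategy, but with a different packaging that changes which step is the bottleneck.  The paper feeds the martingale difference array into Kubilius' theorem (Theorem~\ref{thm:Kub}) after a time change by the conditional variances $V_{n,k}/V_{n,n}$, and then has to match the random index $k$ defined by $V_{n,k}\le tV_{n,n}<V_{n,k+1}$ with the deterministic index $[nt]$ (Propositions~\ref{prop:k}, \ref{prop:Z}, Lemma~\ref{lem:Mn}); this matching step, not the Kubilius estimate, is what produces the final exponent $r(p)=\frac{p-2}{4p}$.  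You instead embed the raw martingale $(M_k)$ directly by stopping times $\mathfrak t_k$ and compare $n^{-1/2}B(\mathfrak t_{[nt]})$ with $n^{-1/2}B(n\sigma^2 t)$, which bypasses that matching step entirely and shifts the cost into controlling $\Delta_n=\sup_t|n^{-1}\mathfrak t_{[nt]}-\sigma^2 t|$.  That is a legitimate alternative organisation, and in fact if the quadratic-variation estimate came out as the paper's does, your route would plausibly yield a slightly \emph{better} exponent than $\frac{p-2}{4p}$; so be wary of asserting you land on $r(p)$ ``after the arithmetic'' without doing it.

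The genuine gap is the one you flag as the ``main obstacle'', and it is not merely a matter of routine decay-of-correlations input.  Controlling $\Delta_n$ requires controlling $\max_{k\le n}\bigl|\sum_{j<k}\bigl(\E(m^2\mid f^{-1}\cM)\circ f^j-\sigma^2\bigr)\bigr|$, and the obstacle is that $\E(m^2\mid f^{-1}\cM)-\sigma^2$ is not a H\"older observable on $\Lambda$ --- it is a conditional expectation of the square of a martingale increment, a priori only in $L^{p/2}$ --- so the maximal/moment machinery of Proposition~\ref{prop:moment} for $C^\eta_0$ observables does not apply, and a bare $L^{p/2}$ estimate (with the $n^{2/p}$ rather than $n^{1/2}$ maximal growth when $p<4$) feeds a poor exponent through Markov.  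The paper resolves this with Proposition~\ref{prop:second} (= \cite[Corollary~3.2]{KKM18}), which upgrades the bound to $L^{2(p-1)}$ with $n^{1/2}$ growth by exploiting the H\"older structure of $v$ through the tower (see the references there to \cite[Corollary~2.10, Remark~2.16, Proposition~3.1]{KKM18}); this is the single non-routine dynamical input, and your plan neither supplies it nor indicates that something stronger than $L^{p/2}$ is in fact available.  Two smaller issues: Proposition~\ref{prop:decomp} gives $\chi\in L^{p-1}(\Delta)$, not $L^p$, and the usable estimate is $|\max_{j\le n}|\chi\circ f^j-\chi||_p\ll n^{1/p}$ rather than an $L^p$ bound on $\chi$ alone; and the construction naturally lives on the tower extension $\Delta$ with the reversed filtration $\cG_{n,j}=f^{-(n-j)}\cM$, not directly on $(\Lambda,T,P)$, so you should carry the semiconjugacy $\pi_\Delta$ and the time-reversal functional $g(u)(t)=u(1)-u(1-t)$ explicitly as in the proof of Theorem~\ref{thm:WIP}.
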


\begin{rmk} \label{rmk:optimal}  The exact formula for the function $r:(2,\infty)\to(0,\frac14)$ can probably be improved slightly using more careful arguments.  However, it is known that the main feature, namely $\sup r=\frac14$, is essentially optimal under the methods used.

Specifically, we use a result of Kubilius~\cite{Kubilius94} which builds upon~\cite{Haeusler84,HallHeyde80}.  These results use the martingale version of the Skorokhod embedding theorem; by~\cite{Borovkov73,Sawyer72}, this method cannot yield rates better than $O(n^{-\frac14})$.   
\end{rmk}

\subsection{Rates for fast-slow systems}
\label{subsec:rates}

Next, we consider fast-slow dynamical systems of the form~\eqref{eq:fastslow}.
Here $x_\eps\in\R$ denotes the slow variables and the fast $y$-variables are generated by a nonuniformly expanding map $T:\Lambda\to\Lambda$, so
$y(n)=T^ny_0$ where $y_0$ is chosen randomly from $(\Lambda,\mu)$.

We continue to assume that $v\in C^\eta_0(\Lambda)$ and also that $a_\eps:\R\times \Lambda\to\R$ and $b:\R\to\R$, satisfy 
the following regularity conditions:
\vspace{-1ex}
\paragraph{Regularity conditions:}  
There are constants $C>0$, $\Lip\,a_0>0$, such that
\begin{itemize}

\parskip=-2pt
\item[(i)]
$\sup_\eps|a_\eps|_\infty<\infty$. \hspace{1em}
(ii)
$\,\sup_{x,y,\eps}|a_\eps(x,y)-a_0(x,y)|\le C\eps^\frac13$.
\item[(iii)] $a_0$ is Lipschitz in $x$ uniformly in $y$. That is,
$|a_0(x,y)-a_0(x',y)|\le \Lip\, a_0\,  |x-x'|$ for all
$x,x'\in\R$, $y\in\Lambda$.
\end{itemize}
\vspace{-1ex}
Moreover,
$b$ is $C^2$ and nonvanishing
with $b,b',b'',1/b\in L^\infty$.
(It is clear from the proof that $C^2$ can be reduced to $C^{\frac43}$.)

\vspace{1ex}
Let $\hat x_\eps(t)=x_\eps(t\eps^{-2})$ for $t=0,\eps^2,2\eps^2,\ldots$ and  linearly interpolate to obtain $\hat x_\eps\in C[0,1]$.
By~\cite[Theorem~1,3]{GM13b} (see also~\cite[Section~6]{KKM18}), $\hat x_\eps\to_w X$ in $C[0,1]$ for $T$ nonuniformly expanding of order $2$, where $X$ is the solution of the Stratonovich  SDE
\begin{align} \label{eq:XSDE}
dX=\Big\{
\bar a(X)-\frac12 b(X)b'(X)\int_\Lambda v^2\,d\mu
\Big\}\,dt+b(X)\circ dW, \quad X(0)=\xi.
\end{align} 
Here $W$ is Brownian motion with variance $\sigma^2$ as in Lemma~\ref{lem:CLT} and $\bar a(x)=\int_\Lambda a(x,y)\,d\mu(y)$.
Our second main result gives an estimate for the rate of convergence $\pi_1(\hat x_\eps,X)$ in the Prokhorov metric.

\begin{thm} \label{thm:NUEfs}
Let $T:\Lambda\to\Lambda$ be nonuniformly expanding of order $p>2$.
Suppose that $a_\eps$ and $b$ satisfy the above regularity conditions.
Suppose further that $v\in C^\eta_0(\Lambda)$ and that
$\sup_{x\in\R}|a_0(x,\cdot)|_\eta<\infty$.
Then there is a constant $C>0$ such that
\[
\pi_1(\hat x_\eps,X)\le \begin{cases} 
C\eps^{\frac{p-2}{2p}} & p\le p_* \\
C\eps^{\frac13\frac{2p-2}{2p-1}}(-\log\eps)^{\frac12(p-1)} & p>p_*
\end{cases},
\]
where $p_*=\frac14(11+\sqrt{73})\approx 4.89$.
\end{thm}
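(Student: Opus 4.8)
The plan is to follow the rough-path approach to deterministic homogenization from~\cite{GM13b,KKM18}, but tracking rates throughout. Write $n=\eps^{-2}$. In addition to the path $W_n$ of Section~\ref{sec:main}, introduce the iterated sums $\mathbb{W}_n(t)=n^{-1}\sum_{0\le i<j<nt}v(T^iy_0)\,v(T^jy_0)$, so that $\mathbf{W}_n=(W_n,\mathbb{W}_n)$ is a discrete level-$2$ rough path. A discrete Taylor expansion of the recursion~\eqref{eq:fastslow}, followed by summation by parts (to re-sum the fluctuating factors $b(x_\eps(j))$), shows that $\hat x_\eps=\Phi(\mathbf{W}_n)+E_\eps$, where $\Phi$ is the It\^o--Lyons solution map of the rough differential equation associated with~\eqref{eq:XSDE}, and the remainder $E_\eps$ gathers the drift mismatch $\eps^2\sum_{j<nt}(a_\eps-a_0)$, which is $O(\eps^{1/3})$ by regularity condition~(ii), the second-order remainder in the expansion of $b\in C^2$, and lower-order coboundary terms. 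By universality of $\Phi$ one has $X=\Phi(\mathbf{W})$ for $\mathbf{W}$ the Stratonovich Brownian rough path of variance $\sigma^2$; the second-level correction $\tfrac12 t\int_\Lambda v^2\,d\mu$ is exactly what turns the rough equation into the Stratonovich form~\eqref{eq:XSDE}. Thus
\[
\pi_1(\hat x_\eps,X)\le \pi_1\bigl(\Phi(\mathbf{W}_n),\Phi(\mathbf{W})\bigr)+O(\eps^{1/3}),
\]
and the task is to transfer a convergence rate from $\mathbf{W}_n$ to its image.

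For that I would use the standard device for Prokhorov bounds: if $Z,Z'$ live on a common probability space with $\P(\|Z-Z'\|_\infty>\delta)\le\delta$, then $\pi_1(Z,Z')\le\delta$. So I need to couple $\mathbf{W}_n$ with a genuine Brownian rough path $\mathbf{W}$ so that the two are close in a rough-path metric off a set of small measure. For the first component this is supplied by the proof of Theorem~\ref{thm:WIP}: the martingale Skorokhod/Kubilius embedding used there produces a Brownian motion $W$ with $\|W_n-W\|_\infty$ controlled at rate $n^{-r(p)}$. For the second component I would exploit the martingale--coboundary machinery summarized in Section~\ref{sec:KKM}: writing $v$ and the products $v\,v\circ T^k$ via iterated martingale--coboundary decompositions, the martingale part of $\mathbb{W}_n$ is handled by the same embedding (its quadratic variation being close to $\sigma^2 t$), while the corrector and cross terms are estimated using only $\tau\in L^p$. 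It is here that the exponent degrades from $\tfrac13$ to $\tfrac13\frac{2p-2}{2p-1}$, by balancing the blocking/discretization scale against the available moments, and here that the factor $(-\log\eps)^{\frac12(p-1)}$ enters, through the truncation needed to convert $L^p$ bounds into high-probability statements.

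Third, I would invoke local Lipschitz continuity of $\Phi$ in a rough-path topology (say $p'$-variation with $p'\in(2,3)$, or H\"older just below $\tfrac13$): $\|\Phi(\mathbf{W}_n)-\Phi(\mathbf{W})\|_\infty\le L(\|\mathbf{W}_n\|,\|\mathbf{W}\|)\,\varrho(\mathbf{W}_n,\mathbf{W})$, combined with a priori tail bounds on the homogeneous rough-path norms $\|\mathbf{W}_n\|$ (polynomial, from $\tau\in L^p$, uniformly in $\eps$) and $\|\mathbf{W}\|$ (Gaussian). Restricting to the event where both norms are at most $R$, with $R$ a slowly growing power of $\log(1/\eps)$, makes $L$ effectively a constant times $R^{O(p)}$, while the complementary event has negligible probability. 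Assembling these estimates and using the triangle inequality for $\pi_1$ through $\Phi(\mathbf{W}_n)$ and $\Phi(\mathbf{W})$ gives $\pi_1(\hat x_\eps,X)\ll n^{-r(p)}+\eps^{\frac13\frac{2p-2}{2p-1}}(-\log\eps)^{\frac12(p-1)}$. Since $n^{-r(p)}=\eps^{(p-2)/(2p)}$, comparing the two exponents amounts to $2p^2-11p+6=0$, i.e.\ the crossover at $p_*=\tfrac14(11+\sqrt{73})$, and one term dominates in each of the two ranges of $p$, yielding the stated estimate.

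The step I expect to be the main obstacle is the quantitative control of the area component $\mathbb{W}_n$: Theorem~\ref{thm:WIP} only delivers a rate for the path, so one must re-run the martingale-embedding argument while carrying the bilinear terms $v\,v\circ T^k$, and simultaneously absorb the discretization error between the area of the step-path and the continuous Stratonovich area. The balance between the discretization scale and the moment constraint $\tau\in L^p$ is precisely what produces the exponent $\tfrac13\frac{2p-2}{2p-1}$ and the logarithmic loss, and carrying it out without spurious $\delta$'s is delicate. A secondary difficulty is securing the a priori tail bounds on $\|\mathbf{W}_n\|$ uniformly in $\eps$, so that the local-Lipschitz step costs only a logarithmic rather than a polynomial factor.
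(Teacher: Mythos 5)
Your plan takes a genuinely different route from the paper, and it stalls at exactly the place you flag as the main obstacle. The paper never constructs, couples, or estimates the iterated-sum object $\mathbb{W}_n$, and it never invokes local Lipschitz continuity of an It\^o--Lyons map with rough-path tail bounds. Because the slow variable is scalar, the multiplicative noise is removed outright by the change of variables $z=\psi(x)$ with $\psi'=1/b$ (Subsection~\ref{sec:gen}): in the $z$-coordinate the recursion has additive noise, the It\^o--Stratonovich correction becomes an ordinary drift involving $v^2$, and the relevant solution map $\cG:u\mapsto (\text{solution of }\dot v=\bar a(v)+\dot u)$ is globally Lipschitz on $C[0,1]$ by Gronwall, so the transfer from $\pi_1(W_\eps+D_\eps+E_\eps,W)$ to $\pi_1(\hat x_\eps,X)$ is a single application of Whitt's Lipschitz mapping theorem with no second-level object and no localisation of norms. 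Your proposal, by contrast, leaves as an unresolved claim the quantitative coupling of $\mathbb{W}_n$ to the Brownian area via the Kubilius/Skorokhod embedding; that embedding gives a coupling only at the path level, and extending it to the bilinear functional $v\,v\circ T^k$ at a rate is not something the paper's ingredients (Section~\ref{sec:KKM}) deliver, nor is it attempted.

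Where your intuition is partly right is that the exponent $\frac13\frac{2p-2}{2p-1}$ and the factor $(-\log\eps)^{\frac12(p-1)}$ do come from a blocking scale versus moments trade-off with a truncation — but in the paper this occurs in controlling the \emph{drift} block sum $D_\eps(t)=\eps^{2/3}\sum_{n<t\eps^{-2/3}}J_\eps(n)$ (Proposition~\ref{prop:prelim}, Lemma~\ref{lem:Deps}), not the area. The drift is discretised in blocks of length $M=[\eps^{-4/3}]$, the $x$-dependence of $\tilde a(x_\eps(nM),\cdot)$ is handled by chaining over a finite net $S\subset[-Q,Q]$ of cardinality $|S|\approx\eps^{-1/3}(-\log\eps)^{1/4}$ on the high-probability event $\{\supI|\hat x_\eps|\le Q\}$ with $Q\asymp(-\log\eps)^{1/2}$ (Lemma~\ref{lem:Q}), and the resulting $L^q$ bound $\eps^{1/3}(-\log\eps)^{1/4}$ with $q=2(p-1)$ is converted to a Prokhorov estimate via Markov's inequality, $\eps_0\mapsto\eps_0^{q/(q+1)}$ (Proposition~\ref{prop:pi}(b)) — this is precisely where $\frac{q}{q+1}=\frac{2p-2}{2p-1}$ and $(-\log\eps)^{q/4}=(-\log\eps)^{(p-1)/2}$ appear. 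Your crossover computation $2p^2-11p+6=0$ giving $p_*=\frac14(11+\sqrt{73})$ is correct, but to make your argument go through you would have to supply the quantitative area coupling, which is a substantial missing piece and strictly unnecessary given the one-dimensional change of variables that the paper exploits.
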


\begin{rmk} \label{rmk:Ito}
When $T$ is mixing, we have the alternative representation of the SDE~\eqref{eq:XSDE} in It\^{o} form
\[
dX=\Big\{
\bar a(X)+b(X)b'(X)\sum_{n=1}^\infty \int_\Lambda v\,v\circ T^n\,d\mu
\Big\}\,dt+b(X)\, dW, \quad X(0)=\xi.
\]
This is immediate from formula~\eqref{eq:GK} and the It\^{o}-Stratonovich conversion formula.
The nature of the drift coefficient and the departure from It\^{o} or Stratonovich is discussed further in~\cite{GivonKupferman04,GM13b,KupfermanPavliotisStuart04}.  
 \end{rmk}

\section{Martingale approximation}
\label{sec:KKM}

In this section, we recall some results of Gordin-type~\cite{Gordin69} on martingale approximation from~\cite{KKM18}.
The key advantage of~\cite{KKM18} over other martingale approximation methods~\cite{Gordin69,Liverani96,TyranKaminska05} is that it gives good control over sums of squares of the approximating martingale, see Proposition~\ref{prop:second} below.

Recall that a measure-preserving transformation $f:\Delta\to\Delta$ on a probability space $(\Delta,\cM,\mu_\Delta)$ is called an {\em extension} of $T:\Lambda\to\Lambda$ if there is a measure-preserving map $\pi_\Delta:\Delta\to\Lambda$ such that
$\pi_\Delta \circ f=T\circ\pi_\Delta$.  The map $\pi_\Delta$ is called a {\em semiconjugacy}.

Throughout this section, we suppose that $T:\Lambda\to\Lambda$ is a nonuniformly expanding map of order $p\ge2$ and that $\eta\in(0,1]$.
By~\cite[Propositions~2.4 and~2.5, and Corollary~2.8]{KKM18}, we can
form the following ``martingale-coboundary decomposition'':

\begin{prop} \label{prop:decomp}
There is an extension $f:\Delta\to\Delta$ of $T:\Lambda\to\Lambda$ such that
for any $v\in C^\eta_0(\Lambda)$ there exists
$m\in L^p(\Delta)$ and $\chi\in L^{p-1}(\Delta)$ with
\[
v\circ\pi_\Delta=m+\chi\circ f-\chi, \qquad \E(m|f^{-1}\cM)=0.
\]
Moreover, there is a constant $C>0$ such that for all $v\in C^\eta_0(\Lambda)$, $n\ge1$,
\[
|m|_p\le C\|v\|_\eta, \quad\text{and}\quad
\Big|\max_{j\le n}|\chi\circ f^j-\chi|\Big|_p\le C\|v\|_\eta\, n^{\frac1p}
.\quad\footnote{By~\cite{KKM18}, $\chi\circ f^j-\chi\in L^p(\Delta)$ for $j\ge 1$
even though $\chi$ is generally only $L^{p-1}$.}
 \qquad
\qed
\]
\end{prop}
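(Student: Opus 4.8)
The plan is to pass to the induced map $F=T^{\tau}:Y\to Y$, which by (i)--(iii) is Gibbs--Markov, to perform a Gordin-type decomposition there via its transfer operator, and then to ``un-induce'' onto the Young tower
\[
\Delta=\{(y,\ell):y\in Y,\ 0\le\ell\le\tau(y)-1\},\qquad
f(y,\ell)=\begin{cases}(y,\ell+1)&\text{if }\ell<\tau(y)-1,\\ (Fy,0)&\text{if }\ell=\tau(y)-1,\end{cases}
\]
with semiconjugacy $\pi_\Delta(y,\ell)=T^{\ell}y$, base projection $\pi_Y(y,\ell)=y$, and level function $\ell(\cdot)$. Equip $\Delta$ with the $f$-invariant probability measure $\mu_\Delta$ got by normalising the product of the $F$-invariant measure $\mu_Y$ on $Y$ with counting measure on levels, so $\mu_\Delta(\ell>k)=\bar\tau^{-1}\sum_{i>k}\mu_Y(\tau>i)$ with $\bar\tau=\int_Y\tau\,d\mu_Y$; by Kac's formula $\pi_{\Delta*}\mu_\Delta=\mu$. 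Standard Gibbs--Markov theory supplies uniformly bounded distortion, an $L^{\infty}$ invariant density, and a spectral gap for the transfer operator $L_F$ of $F$ on $C^{\eta}(Y)$; recall that $L_F$ is a positive $L^{\infty}$-contraction with $L_F1=1$.

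On the base, set $V=v_\tau$, i.e.\ $V(y)=\sum_{\ell=0}^{\tau(y)-1}v(T^{\ell}y)$; by Kac, $\int_Y V\,d\mu_Y=\bar\tau\int_\Lambda v\,d\mu=0$. The key point is that although $V$ is only $L^{p}(Y)$, a single application of $L_F$ already produces a genuinely H\"older function: using (ii), (iv) and bounded distortion one gets $\|L_FV\|_{\eta}\le C\|v\|_{\eta}\sum_{a\in\alpha}\mu_Y(a)\tau(a)=C\bar\tau\|v\|_{\eta}$, the mechanism being that $V$ grows only linearly in $\tau$, which is then integrated out. Since $\int_Y L_FV\,d\mu_Y=0$, the spectral gap yields $|L_F^{\,n}V|_{\infty}\le C\bar\tau\,\theta^{\,n-1}\|v\|_{\eta}$ for some $\theta\in(0,1)$ and all $n\ge1$, so $\hat\chi:=\sum_{n\ge1}L_F^{\,n}V$ converges in $C^{\eta}(Y)$ with $|\hat\chi|_{\infty}\le C\|v\|_{\eta}$, and $\hat M:=V-\hat\chi\circ F+\hat\chi$ satisfies $L_F\hat M=0$ and $|\hat M|_{L^{p}(Y)}\le|v|_{\infty}|\tau|_{L^{p}(Y)}+2|\hat\chi|_{\infty}\le C\|v\|_{\eta}$.

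Now un-induce. Because $f$ is non-invertible and injective along fibres, the condition $\E(m\mid f^{-1}\cM)=0$ forces $m$ to vanish off the top levels, so the decomposition is essentially forced: take $m(y,\ell)=\hat M(y)$ for $\ell=\tau(y)-1$ and $m(y,\ell)=0$ otherwise, and $\chi(y,\ell)=\hat\chi(y)+v_\ell(y)$. A direct telescoping check gives $v\circ\pi_\Delta=m+\chi\circ f-\chi$, and $\E(m\mid f^{-1}\cM)=0$ reduces on the base to $L_F\hat M=0$, which holds. The first two estimates follow at once: $|m|_{p}^{p}=\bar\tau^{-1}|\hat M|_{L^{p}(Y)}^{p}\le C\|v\|_{\eta}^{p}$, while from $|\chi(y,\ell)|\le|\hat\chi|_{\infty}+\ell\,|v|_{\infty}$ together with $\int_\Delta\ell^{\,p-1}\,d\mu_\Delta=\bar\tau^{-1}\sum_{\ell\ge0}\ell^{\,p-1}\mu_Y(\tau>\ell)\asymp\bar\tau^{-1}\int_Y\tau^{p}\,d\mu_Y<\infty$ one gets $\chi\in L^{p-1}(\Delta)$ (and in general $\chi\notin L^{p}$, since that would need $\int_Y\tau^{p+1}\,d\mu_Y<\infty$).

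The remaining bound $\big|\max_{j\le n}|\chi\circ f^{j}-\chi|\big|_{p}\le C\|v\|_{\eta}n^{1/p}$ is the crux. Since $\chi=\hat\chi\circ\pi_Y+v_{\ell(\cdot)}(\pi_Y\cdot)$ and $\pi_Y\circ f^{j}=F^{N}\circ\pi_Y$, where $N=N(j,z)\le n$ is the number of returns in $j$ steps, for $z=(y,\ell)$ we have
\[
\chi(f^{j}z)-\chi(z)=\big(\hat\chi(F^{N}y)-\hat\chi(y)\big)+v_{\ell(f^{j}z)}(F^{N}y)-v_{\ell(z)}(y).
\]
I would split $\Delta$ according to whether the orbit leaves the fibre over $y$ within $n$ steps, i.e.\ $z\in E_n:=\{\tau(\pi_Yz)-\ell(z)\le n\}$ or $z\in E_n^{c}$. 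On $E_n^{c}$ every $f^{j}z$ ($j\le n$) stays in that fibre, so $|\chi(f^{j}z)-\chi(z)|=\big|\sum_{i=\ell}^{\ell+j-1}v(T^{i}y)\big|\le n|v|_{\infty}$; by Markov $\mu_\Delta(E_n^{c})=\bar\tau^{-1}\int_{\{\tau>n\}}(\tau-n)\,d\mu_Y\le\bar\tau^{-1}n^{1-p}|\tau|_{L^{p}(Y)}^{p}$, so this part contributes $\le n|v|_{\infty}\mu_\Delta(E_n^{c})^{1/p}\le C\|v\|_{\eta}n^{1/p}$. On $E_n$, using $\ell(z)<\tau(\pi_Yz)$, $\ell(f^{j}z)<\tau(F^{N}\pi_Yz)$ and $1\le N\le n$, one bounds $|\chi(f^{j}z)-\chi(z)|\le2|\hat\chi|_{\infty}+|v|_{\infty}\big(\tau(\pi_Yz)+\max_{1\le k\le n}\tau(F^{k}\pi_Yz)\big)$, and the $L^{p}$ estimate then comes from three facts: $\int_{E_n}\tau(\pi_Yz)^{p}\,d\mu_\Delta=\bar\tau^{-1}\int_Y\tau^{p}\min(\tau,n)\,d\mu_Y\le\bar\tau^{-1}n|\tau|_{L^{p}(Y)}^{p}$; for $k\ge1$, $\int_\Delta\tau(F^{k}\pi_Yz)^{p}\,d\mu_\Delta=\bar\tau^{-1}\int_Y(L_F^{\,k}\tau)\,\tau^{p}\,d\mu_Y\le\bar\tau^{-1}|L_F^{\,k}\tau|_{\infty}|\tau|_{L^{p}(Y)}^{p}\le C|\tau|_{L^{p}(Y)}^{p}$, since $|L_F^{\,k}\tau|_{\infty}\le|L_F\tau|_{\infty}\le C\bar\tau$ by bounded distortion; and $\big(\max_{1\le k\le n}\tau(F^{k}\cdot)\big)^{p}\le\sum_{k=1}^{n}\tau(F^{k}\cdot)^{p}$. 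Summing yields the claim. The routine parts are the telescoping identity, the inequality $\|L_FV\|_{\eta}\le C\bar\tau\|v\|_{\eta}$, and the elementary tower identities; the real work, and the main obstacle, is the last paragraph --- controlling in $L^{p}$ the maximal height of a tower fibre visited within $n$ steps, where a naive union bound over the $n$ times fails (as $\ell\notin L^{p}(\mu_\Delta)$) and the $n^{1/p}$ gain instead comes from using $\tau\in L^{p}$ twice: via the Markov estimate that makes $E_n^{c}$ have measure $O(n^{1-p})$, and via the uniform distortion bound $|L_F^{\,k}\tau|_{\infty}\le C\bar\tau$, which prevents the $p$-th moments of re-entered fibres from accumulating faster than linearly in $n$.
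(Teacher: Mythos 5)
The paper does not actually prove this proposition: it is quoted directly from \cite{KKM18} (their Propositions~2.4, 2.5 and Corollary~2.8), so there is no in-paper argument to compare against. Your proposal supplies a complete proof from first principles by the expected route---induce to the Gibbs--Markov base $F:Y\to Y$, perform the Gordin decomposition there using the spectral gap of $L_F$ on $C^\eta(Y)$ applied to $L_F V$ (the key observation being that $L_FV$ is genuinely H\"older and bounded even though $V=v_\tau$ is only in $L^p$), and then un-induce onto the Young tower $\Delta$; this is the same strategy as \cite{KKM18}. The telescoping identity, the reduction of $\E(m|f^{-1}\cM)=0$ to $L_F\hat M=0$, and the bound $|m|_p\ll\|v\|_\eta$ all check out, as does the observation that $\chi=\hat\chi\circ\pi_Y+v_{\ell(\cdot)}$ lies in $L^{p-1}$ but not $L^p$.

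The maximal estimate is where the real work is, and your argument is essentially sound and well targeted: the split into $E_n=\{\tau\circ\pi_Y-\ell\le n\}$ and its complement, the Markov bound $\mu_\Delta(E_n^c)\ll n^{1-p}$ that absorbs the $n|v|_\infty$ bound on the complement, and the control $|L_F^k\tau|_\infty\le|L_F\tau|_\infty\ll\bar\tau$ (via bounded distortion and the fact that $L_F$ is a positive $L^\infty$-contraction), combined with $\bigl(\max_k\tau\circ F^k\bigr)^p\le\sum_k\tau^p\circ F^k$, are exactly the mechanisms that turn $\tau\in L^p$ into the $n^{1/p}$ rate. Two very minor points: the pointwise bound on $E_n$ should read $2\tau(\pi_Yz)+\max_{1\le k\le n}\tau(F^k\pi_Yz)$ (or replace the max by $\max_{0\le k\le n}$) to cover the case $N(j,z)=0$, and one should note explicitly that $N(j,z)\le j\le n$ so the max over $k\le n$ is legitimate. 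Neither affects the estimate. Overall the proposal is correct and fills in the proof that the paper delegates to \cite{KKM18}.
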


\begin{rmk} The extension $\Delta$ is called the Young tower~\cite{Young99} associated to the nonuniformly expanding map $T$.  The definition and properties of $\Delta$ are not required in this paper; the current section summarizes all the results that we need.
\end{rmk}

Let
\[
\cG_{n,j}=f^{-(n-j)}\cM, \quad \xi_{n,j}=n^{-\frac12}\sigma^{-1}m\circ f^{n-j}.
\]
\begin{prop}
$\{\xi_{n,j},\cG_{n,j};\,1\le j\le n\}$ is a martingale difference array.
\end{prop}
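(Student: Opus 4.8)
The plan is to verify the three defining properties of a martingale difference array directly from the data supplied by Proposition~\ref{prop:decomp}, namely that $f$ preserves $\mu_\Delta$, that $m\in L^p(\Delta)$ with $p\ge2$, and that $\E(m\mid f^{-1}\cM)=0$. Fixing $n$, I need to check: (a) the $\sigma$-algebras $\cG_{n,j}=f^{-(n-j)}\cM$ are increasing in $j$; (b) each $\xi_{n,j}=n^{-\frac12}\sigma^{-1}m\circ f^{n-j}$ is $\cG_{n,j}$-measurable and lies in $L^1(\Delta)$; and (c) $\E(\xi_{n,j}\mid\cG_{n,j-1})=0$ for $2\le j\le n$, together with $\E\xi_{n,1}=0$ (equivalently, with $\cG_{n,0}$ the trivial $\sigma$-algebra).

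For (a) I would use only that $f$ is measurable, so that $f^{-1}\cM\subset\cM$ and hence $f^{-(k+1)}\cM\subset f^{-k}\cM$ for every $k\ge0$; since $n-j$ decreases in $j$ this gives $\cG_{n,1}\subset\cG_{n,2}\subset\cdots\subset\cG_{n,n}=\cM$. For (b), $m$ is $\cM$-measurable, so $m\circ f^{n-j}$ is $f^{-(n-j)}\cM=\cG_{n,j}$-measurable; and since $f$ preserves $\mu_\Delta$ and $m\in L^p(\Delta)$ with $p\ge2$, we have $m\circ f^{n-j}\in L^p(\Delta)\subset L^1(\Delta)$.

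The crux is (c). The key tool is the pull-back identity for conditional expectation under a measure-preserving map: for $g\in L^1(\Delta)$ and any sub-$\sigma$-algebra $\cB\subset\cM$, one has $\E(g\circ f\mid f^{-1}\cB)=\big(\E(g\mid\cB)\big)\circ f$, which is an immediate consequence of the change-of-variables formula for the $f$-invariant measure $\mu_\Delta$. Iterating it $k$ times gives $\E(g\circ f^k\mid f^{-k}\cB)=\big(\E(g\mid\cB)\big)\circ f^k$. I would apply this with $g=m$, $\cB=f^{-1}\cM$, and $k=n-j$, using the index identity $\cG_{n,j-1}=f^{-(n-j+1)}\cM=f^{-(n-j)}(f^{-1}\cM)$, to obtain
\[
\E(\xi_{n,j}\mid\cG_{n,j-1})=n^{-\frac12}\sigma^{-1}\,\E\big(m\circ f^{n-j}\mid f^{-(n-j)}(f^{-1}\cM)\big)=n^{-\frac12}\sigma^{-1}\big(\E(m\mid f^{-1}\cM)\big)\circ f^{n-j}=0
\]
for $2\le j\le n$, since $\E(m\mid f^{-1}\cM)=0$ by Proposition~\ref{prop:decomp}; the case $j=1$ follows from $\E m=\E\big(\E(m\mid f^{-1}\cM)\big)=0$.

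I do not anticipate a genuine obstacle: the statement is essentially bookkeeping. The only points needing mild care are that $f$ is not assumed invertible, so one must work throughout with the pre-image $\sigma$-algebras $f^{-k}\cM$ and with the pull-back identity in the form above rather than pushing measures forward, and the index shift $\cG_{n,j-1}=f^{-(n-j)}(f^{-1}\cM)$ that makes the single available relation $\E(m\mid f^{-1}\cM)=0$ directly applicable after $n-j$ iterations of the identity.
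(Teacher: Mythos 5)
Your proof is correct, and it fills in the direct verification that the paper itself simply relegates to the citation of \cite[Proposition~2.9]{KKM18}. The three bookkeeping checks (monotonicity of $\cG_{n,j}$, adaptedness and integrability of $\xi_{n,j}$, and the conditional centering) are exactly right, and the pull-back identity $\E(g\circ f\mid f^{-1}\cB)=\E(g\mid\cB)\circ f$ together with the index shift $\cG_{n,j-1}=f^{-(n-j)}(f^{-1}\cM)$ is precisely what makes the single hypothesis $\E(m\mid f^{-1}\cM)=0$ propagate to all $j$. One small remark: in the paper's indexing $\cG_{n,0}=f^{-n}\cM$ is \emph{not} the trivial $\sigma$-algebra, and indeed this $\sigma$-algebra is used later (e.g.\ in the definition of $V_{n,\ell}$); but your main computation already gives $\E(\xi_{n,1}\mid\cG_{n,0})=0$ by taking $k=n-1$, so there is no need to treat $j=1$ separately, and the proof goes through unchanged with the correct interpretation of $\cG_{n,0}$.
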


\begin{proof}  See for example~\cite[Proposition~2.9]{KKM18}.
\end{proof}

\begin{prop} \label{prop:moment}
There is a constant $C>0$ such that
\[
\big|\max_{j\le n}|v_j|\big|_{2(p-1)}\le C\|v\|_\eta\,n^\frac12
\quad\text{for all $v\in C^\eta_0(\Lambda)$, $n\ge1$.} 
\]
\end{prop}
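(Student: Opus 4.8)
The plan is to combine the martingale–coboundary decomposition of Proposition~\ref{prop:decomp} with the elementary bound $\max_{j\le n}|v_j|\le n|v|_\infty$ and a deviation estimate, and then to convert this into the moment bound by splitting the layer-cake integral $\E\big(\max_{j\le n}|v_j|\big)^{2(p-1)}=2(p-1)\int_0^\infty t^{2p-3}\,\mu\big(\max_{j\le n}|v_j|>t\big)\,dt$ at the scales $t\asymp n^{1/2}$ and $t\asymp n$. Since $\pi_\Delta$ is measure preserving it suffices to work with $v_j\circ\pi_\Delta=M_j+\chi\circ f^j-\chi$, $M_j=\sum_{i<j}m\circ f^i$. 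Applying Doob's maximal inequality to the martingale difference array $\{\xi_{n,j},\cG_{n,j}\}$ together with the Burkholder–Davis–Gundy inequality and $|m|_p\ll\|v\|_\eta$ yields $\big|\max_{j\le n}|M_j|\big|_p\ll\|v\|_\eta\,n^{1/2}$, while Proposition~\ref{prop:decomp} gives $\big|\max_{j\le n}|\chi\circ f^j-\chi|\big|_p\ll\|v\|_\eta\,n^{1/p}\le\|v\|_\eta\,n^{1/2}$; this already proves the proposition when $p=2$. For $p>2$ the target exponent $2(p-1)$ exceeds $p$, and the decomposition cannot be used term by term: $M_n$ need not lie in $L^{2(p-1)}$, since $m$ is only in $L^p$, and its large values are cancelled by those of the coboundary, so it is the uniform bound $|v_n|\le n|v|_\infty$ that must carry the argument.

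The key ingredient is a maximal deviation estimate of Fuk–Nagaev type,
\[
\mu\big(\max_{j\le n}|v_j|>t\big)\;\ll\;\exp\!\big(-ct^2/(n\|v\|_\eta^2)\big)\;+\;n\,\|v\|_\eta^{\,p}\,t^{-p}\qquad(1\le t\le n|v|_\infty),
\]
the Gaussian term coming from a Bernstein/Freedman bound for a suitably truncated version of the martingale and the polynomial term from the rare large values of the martingale differences and of the coboundary. Granting this, one bounds the layer-cake integral by $1$ on $[0,Cn^{1/2}]$ — contributing $\ll n^{p-1}$ — and uses the displayed bound on $[Cn^{1/2},n|v|_\infty]$: after the substitution $t=n^{1/2}\|v\|_\eta s$ the Gaussian term integrates to $\ll n^{p-1}\|v\|_\eta^{2(p-1)}$, and, since $p>2$, the polynomial term contributes $n\|v\|_\eta^{p}\int_{Cn^{1/2}}^{n|v|_\infty}t^{p-3}\,dt\ll n\|v\|_\eta^{p}(n|v|_\infty)^{p-2}\ll n^{p-1}\|v\|_\eta^{2(p-1)}$. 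Hence $\big|\max_{j\le n}|v_j|\big|_{2(p-1)}^{2(p-1)}\ll n^{p-1}\|v\|_\eta^{2(p-1)}$, which is the assertion; the arithmetic of this last integral is precisely what singles out $2(p-1)$ as the natural exponent. Alternatively one may quote the corresponding moment estimate directly from~\cite{KKM18}.

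The main obstacle is the displayed deviation estimate itself. The martingale inequalities used for the $L^p$ bound only ``see'' $m\in L^p$ and cannot reach the regime $t\gg n^{1/2}$; doing so requires controlling the conditional quadratic variation $\sum_{i<n}\E\big((m\circ f^i)^2\mid\cG_{n,i}\big)$ — essentially the sums-of-squares estimate recorded below in Proposition~\ref{prop:second} — together with a passage to the Gibbs–Markov induced map $F:Y\to Y$, where the induced observable $\sum_{\ell<\tau}v\circ T^\ell$ is locally H\"older but only $L^p$-integrable. One then splits that observable into a bounded part (return time $\tau\le L$), handled by a Bernstein bound with an $L$-dependent increment, and a tail part (return time $\tau>L$), controlled through $\|\tau\mathbf 1_{\{\tau>L\}}\|_p$, and balances $L$ and the truncation level against $t$ so that the two contributions acquire the shapes displayed above; transferring back from $F$ to $T$ requires moment control on $\max_k\tau\circ F^k$ and on the number of returns, which is routine but must be handled with care. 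The layer-cake bookkeeping itself is entirely elementary.
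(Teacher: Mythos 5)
The paper's own ``proof'' of Proposition~\ref{prop:moment} is a citation to~\cite{MN08,MTorok12} together with the formulation in~\cite[Corollary~2.10]{KKM18}, so there is no in-paper argument to compare against; your closing remark that one may simply quote~\cite{KKM18} is exactly what the authors do. Your opening diagnosis is correct and worth keeping: for $p>2$ the target exponent $2(p-1)$ exceeds $p$, so Doob plus Burkholder--Davis--Gundy applied to the martingale part (with $m\in L^p$ only), together with the coboundary bound from Proposition~\ref{prop:decomp}, reaches only $L^p$, and something genuinely new is needed; for $p=2$ the exponents coincide and the martingale argument alone suffices, as you say.

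However, the substantive content of your sketch --- the Fuk--Nagaev-type estimate
\[
\mu\big(\max_{j\le n}|v_j|>t\big)\ll \exp\!\big(-ct^2/(n\|v\|_\eta^2)\big)+n\|v\|_\eta^p\,t^{-p}
\]
--- is asserted rather than proved, and it carries essentially all of the weight; the layer-cake arithmetic downstream is elementary and you execute it correctly. Establishing such a deviation bound uniformly over the whole range $n^{1/2}\le t\le n|v|_\infty$ in the nonuniformly expanding setting is exactly the hard part: one must truncate the martingale increments, correct the conditional bias the truncation introduces, invoke a Freedman/Bernstein bound using control on the conditional variance (essentially Proposition~\ref{prop:second}), pass to the induced Gibbs--Markov system where the induced observable has a polynomial tail governed by $\tau\in L^p$, and transfer back to $T$ with control on the number of returns and on $\max_k\tau\circ F^k$. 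None of this is carried out, and the matching of the Gaussian and polynomial regimes near $t\asymp n^{1/2}$ is precisely where such arguments fail if the truncation level is chosen carelessly. So what you have is a correct diagnosis and a correctly bookkept outline of a plausible alternative route, but with its core lemma left entirely unproved --- a genuine gap, albeit one that you can (and the paper does) close simply by citation.
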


\begin{proof}
This result is due to~\cite{MN08,MTorok12}.  For the formulation stated here, see for example~\cite[Corollary~2.10]{KKM18}.
\end{proof}

\begin{prop} \label{prop:second}
There is a constant $C>0$ such that 
\[
\Big|n^{-1}\sum_{j=0}^{k-1}\E(m^2-\sigma^2|f^{-1}\cM)\circ f^j\Big|_{2(p-1)}
\le C\|v\|_\eta^2 \,n^{-\frac12} \quad\text{for all $v\in C^\eta_0(\Lambda)$, $n\ge1$.}
\]
\end{prop}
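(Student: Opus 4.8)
The plan is to recognise that $\psi:=\E(m^2-\sigma^2\mid f^{-1}\cM)$, although assembled from $m^2\in L^{p/2}(\Delta)$, is in fact a \emph{bounded} mean-zero observable on $\Delta$ that is H\"older for the symbolic metric of the tower, and then to apply the moment bound of Proposition~\ref{prop:moment} to it. \textbf{Step 1 (mean zero).} I would first check $\sigma^2=\int_\Delta m^2\,d\mu_\Delta$, so that $\int_\Delta\psi\,d\mu_\Delta=0$. Summing $v\circ\pi_\Delta=m+\chi\circ f-\chi$ gives $v_n\circ\pi_\Delta=m_n+\chi\circ f^n-\chi$ with $m_n=\sum_{j=0}^{n-1}m\circ f^j$; since $\E(m\mid f^{-1}\cM)=0$ the terms $m\circ f^j$ are pairwise orthogonal in $L^2(\Delta)$ (for $i<j$, $m\circ f^j$ is $f^{-(i+1)}\cM$-measurable while $\E(m\circ f^i\mid f^{-(i+1)}\cM)=\E(m\mid f^{-1}\cM)\circ f^i=0$), so $\int_\Delta m_n^2\,d\mu_\Delta=n\int_\Delta m^2\,d\mu_\Delta$. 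Using $|m_n|_2\le|v_n|_2+|\chi\circ f^n-\chi|_2\ll\|v\|_\eta\,n^{1/2}$ (Propositions~\ref{prop:moment} and~\ref{prop:decomp}) and $|\chi\circ f^n-\chi|_p\ll\|v\|_\eta\,n^{1/p}$ (Proposition~\ref{prop:decomp}), Cauchy--Schwarz gives $\int_\Lambda v_n^2\,d\mu=n\int_\Delta m^2\,d\mu_\Delta+O(n^{1/2+1/p})$; as $p>2$ forces $\tfrac12+\tfrac1p<1$, dividing by $n$, letting $n\to\infty$, and using Lemma~\ref{lem:CLT}(a) gives $\sigma^2=\int_\Delta m^2\,d\mu_\Delta$ (and in particular $\sigma^2\ll\|v\|_\eta^2$).

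\textbf{Step 2 (boundedness of $\psi$).} This is the heart of the matter and it needs the Young-tower construction behind Proposition~\ref{prop:decomp}, not merely its statement. Realise $\Delta$ as a tower over a Gibbs--Markov base map $F\colon Y\to Y$ with return time $\tau$; a feature of the construction of~\cite{KKM18} is that $m$ is H\"older on each level of $\Delta$, bounded by $C\|v\|_\eta$ off the set $R\subset\Delta$ of ``return levels'' (the top level of each column) and by $C\tau\|v\|_\eta$ on $R$. Let $P$ be the transfer operator of $f$ on $(\Delta,\mu_\Delta)$, so $\E(w\mid f^{-1}\cM)=(Pw)\circ f$. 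Off $R$ the map $f$ is one-to-one, so there $\E(m^2\mid f^{-1}\cM)=m^2$, which is bounded by $C\|v\|_\eta^2$; on a return level lying over $y\in Y$, $\E(m^2\mid f^{-1}\cM)$ is a $g$-weighted average over the (at most countably many) inverse branches $a\in\alpha$ of $F$ at $Fy$ of the top-level values of $m^2$, hence bounded by $C\|v\|_\eta^2\sum_{a\in\alpha}\tau(a)^2\rho(a)=C\|v\|_\eta^2\int_Y\tau^2\,d\rho$, which is finite precisely because $\tau\in L^p\subseteq L^2$. The dependence on $Fy$ is H\"older since $g$ and the top-level observable are H\"older on each branch and $F$ is Gibbs--Markov. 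Thus $\psi=\E(m^2\mid f^{-1}\cM)-\sigma^2$ is bounded and H\"older on $\Delta$ with $\|\psi\|_\eta\ll\|v\|_\eta^2$; equivalently, with $\psi_0:=P(m^2)-\sigma^2$ one has $\psi=\psi_0\circ f$ and, by Step~1, $\psi_0\in C^\eta_0(\Delta)$ with $\|\psi_0\|_\eta\ll\|v\|_\eta^2$.

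\textbf{Step 3 (conclusion).} The tower $(\Delta,f)$ --- inducing on $Y$ with return time $\tau\in L^p$ and with the symbolic metric --- is itself nonuniformly expanding of order $p$, so Proposition~\ref{prop:moment} applies verbatim to $\psi_0\in C^\eta_0(\Delta)$:
\[
\Bigl|\max_{1\le k\le n}\Bigl|{\textstyle\sum_{j=0}^{k-1}}\psi_0\circ f^j\Bigr|\Bigr|_{2(p-1)}\le C\|\psi_0\|_\eta\,n^{1/2}\ll\|v\|_\eta^2\,n^{1/2},\qquad n\ge1.
\]
Since $\sum_{j=0}^{k-1}\psi\circ f^j=\bigl(\sum_{j=0}^{k-1}\psi_0\circ f^j\bigr)\circ f$ and $f$ preserves $\mu_\Delta$, taking $n=k$ above gives $\bigl|\sum_{j=0}^{k-1}\E(m^2-\sigma^2\mid f^{-1}\cM)\circ f^j\bigr|_{2(p-1)}\ll\|v\|_\eta^2\,k^{1/2}$ for every $k\ge1$; dividing by $n$ and using $k^{1/2}\le n^{1/2}$ for $1\le k\le n$ gives the asserted estimate.

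\textbf{The main obstacle} is Step~2: the bound $m^2\in L^{p/2}$ is far too weak to feed into an $L^{2(p-1)}$ estimate, and boundedness of $\psi$ is invisible from Proposition~\ref{prop:decomp} alone. The mechanism is that the large, $\sim\tau^2$-sized part of $m^2$ is supported on the return levels, where $\E(\,\cdot\mid f^{-1}\cM)$ replaces it by an average over all countably many inverse branches of $F$ --- an average controlled by $\int_Y\tau^2<\infty$ rather than by $\|m^2\|_\infty=\infty$. Turning this, and the symbolic-H\"older regularity, into a clean argument requires the explicit martingale--coboundary construction of~\cite{KKM18}; in fact Proposition~\ref{prop:second} is one of the auxiliary estimates proved there, so within the present paper it may simply be cited.
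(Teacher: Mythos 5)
Your proposal correctly identifies the underlying mechanism: one must recognise that $\E(m^2-\sigma^2\mid f^{-1}\cM)$, unlike $m^2$ itself, factors through $f$ as a \emph{bounded}, symbolically H\"older observable on the tower, and then feed this observable into the $L^{2(p-1)}$ moment bound of Proposition~\ref{prop:moment}. This is exactly the route taken in~\cite{KKM18}, which the paper simply cites (the paper's proof is a one-line reference to~\cite[Corollary~3.2]{KKM18}, with a note that the argument in~\cite[Corollary~2.10, Remark~2.16 and Proposition~3.1]{KKM18} in fact delivers the $L^{2(p-1)}$ bound rather than the weaker $L^p$ bound stated there). You also correctly flag the crux: boundedness of the conditional expectation is not visible from the bare statement of Proposition~\ref{prop:decomp} --- it relies on structural features of the~\cite{KKM18} construction, namely that $m$ is of size $O(\tau)$ only on the return levels and that passing through the transfer operator replaces these large values by a $g$-weighted average controlled by $\int_Y\tau^2\,d\rho<\infty$. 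Your Step~1 ($\sigma^2=\int_\Delta m^2\,d\mu_\Delta$ via orthogonality of $m\circ f^j$ and the coboundary estimate $n^{1/2+1/p}=o(n)$) and Step~3 (applying Proposition~\ref{prop:moment} on the tower, which is itself nonuniformly expanding of order $p$, and then shifting by $f$ using invariance of $\mu_\Delta$) are both sound. Since the paper's own proof consists of the citation you propose at the end, your proposal and the paper's proof coincide; the extra detail you supply is a plausible and essentially accurate reconstruction of what lies behind that citation.
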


\begin{proof} 
This is~\cite[Corollary~3.2]{KKM18}.  
For some reason, the result is stated there only with an $L^p$ bound, but
the superior $L^{2(p-1)}$ bound here is 
immediate from the argument used in~\cite{KKM18} (specifically~\cite[Corollary~2.10, Remark~2.16 and Proposition~3.1]{KKM18}).
\end{proof}

\section{Convergence rates in the WIP}
\label{sec:WIP}

In this section, we prove Theorem~\ref{thm:WIP}.
Recall that $T:\Lambda\to\Lambda$ is a nonuniformly expanding map
of order $p>2$ and $v\in C^\eta_0(\Lambda)$.
Let $r(p)=\frac{p-2}{4p}$.

We let $C$ denote constants that may depend on~$p$ and $\|v\|_\eta$.
Also, $C'$ denotes constants that may depend on $p$ but not $v$.

Define $v\circ\pi_\Delta=m+\chi\circ f-\chi$ and
the martingale difference array
$\{\xi_{n,j},f^{-(n-j)}\cM\}$ as in Section~\ref{sec:KKM}.

For $\ell\ge1$, define $V_{n,\ell}=\sum_{j=1}^\ell \E(\xi_{n,j}^2|\cG_{n,j-1})$.
\begin{prop} \label{prop:V}
$\big|\max_{\,k\le n}|V_{n,k}-\frac{k}{n}|\big|_{2(p-1)}\le C'\|v\|_\eta^2 \,n^{-\frac12}$.
\end{prop}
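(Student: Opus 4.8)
The plan is to unwind the definitions and reduce everything to Proposition~\ref{prop:second}. We have $\xi_{n,j}^2=n^{-1}\sigma^{-2}(m\circ f^{n-j})^2$ and $\cG_{n,j-1}=f^{-(n-j+1)}\cM$, so
\[
\E(\xi_{n,j}^2|\cG_{n,j-1})=n^{-1}\sigma^{-2}\,\E\big((m\circ f^{n-j})^2\,\big|\,f^{-(n-j+1)}\cM\big)
=n^{-1}\sigma^{-2}\,\big(\E(m^2|f^{-1}\cM)\big)\circ f^{n-j},
\]
using that $f$ is measure-preserving and invertible, so conditional expectation commutes with composition by $f$ in the usual way. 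Summing over $1\le j\le\ell$ and reindexing $i=n-j$ (which runs over $n-\ell\le i\le n-1$),
\[
V_{n,\ell}=n^{-1}\sigma^{-2}\sum_{i=n-\ell}^{n-1}\big(\E(m^2|f^{-1}\cM)\big)\circ f^{i}.
\]
Hence $V_{n,\ell}-\frac{\ell}{n}=n^{-1}\sigma^{-2}\sum_{i=n-\ell}^{n-1}\big(\E(m^2-\sigma^2|f^{-1}\cM)\big)\circ f^{i}$, since $\frac{\ell}{n}=n^{-1}\sigma^{-2}\sum_{i=n-\ell}^{n-1}\sigma^2$.

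Next I would write this telescoping tail-sum as a difference of two sums starting from $i=0$. Setting $S_k=n^{-1}\sigma^{-2}\sum_{i=0}^{k-1}\big(\E(m^2-\sigma^2|f^{-1}\cM)\big)\circ f^i$, we have $V_{n,\ell}-\frac{\ell}{n}=S_n-S_{n-\ell}$ (with the convention $S_0=0$). Therefore
\[
\max_{k\le n}\Big|V_{n,k}-\tfrac{k}{n}\Big|\le \max_{k\le n}|S_n-S_{n-k}|\le |S_n|+\max_{0\le k\le n}|S_k|\le 2\max_{0\le k\le n}|S_k|.
\]
Taking $L^{2(p-1)}$ norms and applying Proposition~\ref{prop:second} (which bounds exactly $|S_k|_{2(p-1)}$, uniformly, by $C\|v\|_\eta^2 n^{-1/2}$ — note the bound there is uniform in $k\le n$), I still need to pass the norm inside the maximum over $k$. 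This is the one point requiring a small amount of care: a bound on $|S_k|_{2(p-1)}$ for each fixed $k$ does not immediately give a bound on $\big|\max_k|S_k|\big|_{2(p-1)}$. However, inspecting Proposition~\ref{prop:second} and its source in~\cite{KKM18}, the estimate there is in fact obtained via a maximal inequality, so the stronger statement $\big|\max_{k\le n}|S_k|\big|_{2(p-1)}\le C\|v\|_\eta^2 n^{-1/2}$ holds; alternatively one can crudely use $\max_{k\le n}|S_k|\le\sum_{k=1}^n|s_k|$ where $s_k$ is the $k$th summand, but that loses a factor and is wasteful. I would simply cite the maximal form. Combining, $\big|\max_{k\le n}|V_{n,k}-\frac{k}{n}|\big|_{2(p-1)}\le 2C\|v\|_\eta^2 n^{-1/2}$, which is the claim with $C'=2C$.

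The main obstacle, such as it is, is purely bookkeeping: getting the index shift $i=n-j$ correct so that the finite sum over $j$ becomes a genuine Birkhoff-type partial sum to which Proposition~\ref{prop:second} applies, and then recognizing the telescoping structure $V_{n,k}-\frac kn=S_n-S_{n-k}$ that lets a uniform-in-$k$ bound on $S_k$ control the maximum over $k$ of $|V_{n,k}-\frac kn|$. No new analytic input beyond Proposition~\ref{prop:second} is needed; in particular the exponent $2(p-1)$ and the rate $n^{-1/2}$ are inherited verbatim. One should also record that $\sigma^2>0$ is being assumed implicitly (so that $\sigma^{-2}$ makes sense); this is the standing hypothesis for the rate results, as in the statement of Theorem~\ref{thm:WIP}.
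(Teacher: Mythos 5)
Your proof is correct and follows the paper's strategy: unwind $V_{n,k}-\tfrac{k}{n}$ to a Birkhoff sum of $\E(m^2-\sigma^2\mid f^{-1}\cM)$ and invoke Proposition~\ref{prop:second}. You have, however, been more careful than the paper's one-line proof on two points, both worth recording. First, the paper's displayed chain of equalities ends with the head sum $\sum_{j=0}^{k-1}(\cdot)\circ f^j$, but after the substitution $i=n-j$ one actually obtains the \emph{tail} sum $\sum_{i=n-k}^{n-1}(\cdot)\circ f^i$; these agree in law for each fixed $k$ by stationarity, but not jointly in $k$, and the joint law is what matters once $\max_{k\le n}$ sits inside the $L^{2(p-1)}$ norm. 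Your telescope $V_{n,k}-k/n=S_n-S_{n-k}$, hence $\max_{k\le n}|V_{n,k}-k/n|\le 2\max_{\ell\le n}|S_\ell|$, is exactly the repair. Second, you are right that Proposition~\ref{prop:second}, read literally, bounds each $|S_k|_{2(p-1)}$ individually but not $\big|\max_{k\le n}|S_k|\big|_{2(p-1)}$, and that the maximal form is what is needed (and what the source provides, being deduced from the maximal Proposition~\ref{prop:moment}). Be aware, though, that the ``crude fallback'' you mention, $\max_k|S_k|\le\sum_k|s_k|$, is not merely wasteful: since $|s_k|_{2(p-1)}\asymp n^{-1}$, the triangle inequality yields only an $O(1)$ bound and loses the entire $n^{-1/2}$ gain, so the maximal inequality is genuinely indispensable, not a refinement. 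Your remark on $\sigma^2>0$ is fair bookkeeping; in the degenerate case $m=0$ and $v\circ\pi_\Delta$ is a coboundary, so the conclusion holds by a separate, easier argument.
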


\begin{proof}
We have
\begin{align*}
V_{n,k}-\frac{k}{n} &=
\sigma^{-2}n^{-1}\sum_{j=1}^k\E(m^2\circ f^{n-j}|f^{-(n-j+1)}\cM)-\frac{k}{n}
\\ & =\sigma^{-2}n^{-1}\sum_{j=0}^{k-1}\E(m^2-\sigma^2|f^{-1}\cM)\circ f^j.
\end{align*}
The result follows from Proposition~\ref{prop:second}.
\end{proof}

For each $n\ge1$, define
\begin{align} \label{eq:Xn}
X_n(t)=\sum_{j=1}^k \xi_{n,j}, \quad\text{for $t=V_{n,k}/V_{n,n}$, $0\le k\le n$},
\end{align}
 and linearly interpolate to obtain a process $X_n\in C[0,1]$.
We use the following result of~\cite{Kubilius94} for martingale difference arrays to estimate the rate of convergence of $X_n$ to the unit Brownian motion $B$.

\begin{thm}[ {Kubilius~\cite[Theorem 1]{Kubilius94}} ] \label{thm:Kub}
Let $\delta\in[0,\frac34]\cup\{1\}$.
There is a constant $C>0$ such that
$\pi_1(X_n,B)\le C\lambda|\log\lambda|$
where $\lambda=\lambda_1+\lambda_2$ and
\begin{align*}
\lambda_1 & =\inf_{0\le\eps\le1}\big\{\eps^{\frac12}+\bigl(\E\sum_{j=1}^n|\xi_{n,j}|^{2+2\delta}1_{\{|\xi_{n,j}|>\eps\}}\big)^{1/(3+2\delta)}\big\}, \\
\lambda_2 & =\inf_{0\le\eps\le1}\big\{\eps+\P(|V_{n,n}-1|>\eps^2)\big\}.
\end{align*}

\vspace{-5ex}
\qed
\end{thm}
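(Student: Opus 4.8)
The plan is to prove this statement via the Skorokhod-embedding method for martingales, following the line of Strassen, Hall--Heyde and Haeusler but carrying the bookkeeping through so as to produce exactly the truncated functionals $\lambda_1,\lambda_2$. Enlarging the probability space if necessary --- which changes nothing in $\pi_1(X_n,B)$ --- I would first realize the martingale inside a standard Brownian motion $B$: by a \emph{conditional} Skorokhod embedding, applied one increment at a time along the filtration $(\cG_{n,j})$, one obtains stopping times $0=T_{n,0}\le T_{n,1}\le\cdots\le T_{n,n}$ with $\sum_{i\le k}\xi_{n,i}=B(T_{n,k})$ for every $k$, with $\E(T_{n,k}-T_{n,k-1}\mid\cG_{n,k-1})=\E(\xi_{n,k}^2\mid\cG_{n,k-1})$, and with the conditional moment control $\E\bigl(|T_{n,k}-T_{n,k-1}|^r\mid\cG_{n,k-1}\bigr)\le C_r\,\E\bigl(|\xi_{n,k}|^{2r}\mid\cG_{n,k-1}\bigr)$ (a conditional Burkholder--Davis--Gundy estimate for the embedding). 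On this realization $X_n$ is the polygon through the points $\bigl(V_{n,k}/V_{n,n},\,B(T_{n,k})\bigr)$, so it suffices to bound $\|X_n-B\|_\infty$ pathwise on a large event and then use $\pi_1(X_n,B)\le\inf\{\eps>0:\P(\|X_n-B\|_\infty>\eps)\le\eps\}$.

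Second, I would split $\|X_n-B\|_\infty$ into three contributions: (a) the clock discrepancy $\max_{k\le n}|T_{n,k}-V_{n,k}|$; (b) the rescaling error, driven by $|V_{n,n}-1|$, that arises because $X_n$ is indexed by $V_{n,k}/V_{n,n}$ rather than $V_{n,k}$; and (c) the polygonal-interpolation error, governed by the mesh $\max_{k\le n}\E(\xi_{n,k}^2\mid\cG_{n,k-1})/V_{n,n}$ of the time partition together with L\'evy's modulus of continuity of $B$. For (a), observe that $T_{n,k}-V_{n,k}=\sum_{i\le k}\bigl((T_{n,i}-T_{n,i-1})-\E(\xi_{n,i}^2\mid\cG_{n,i-1})\bigr)$ is a martingale; by Doob's maximal inequality and the conditional moment bound its relevant norm is dominated by $\bigl(\E\sum_i|\xi_{n,i}|^{2r}\bigr)^{1/r}$-type quantities, which after truncation at level $\eps$ split into a power of $\eps$ (using $\E(\xi_{n,i}^2\mid\cdot)\le\eps^2+\E(\xi_{n,i}^2\mathbf{1}_{\{|\xi_{n,i}|>\eps\}}\mid\cdot)$ and $\E\sum_i\xi_{n,i}^2=\E V_{n,n}\approx1$) plus, on $\{|\xi_{n,i}|>\eps\}$ via $\xi^2\le\eps^{-2\delta}|\xi|^{2+2\delta}$, a power of $L_{n,\delta}(\eps):=\E\sum_i|\xi_{n,i}|^{2+2\delta}\mathbf{1}_{\{|\xi_{n,i}|>\eps\}}$. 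Item (b) contributes exactly the term appearing in $\lambda_2$. For (c), the mesh is at most $\eps^2/V_{n,n}+\max_k\E(\xi_{n,k}^2\mathbf{1}_{\{|\xi_{n,k}|>\eps\}}\mid\cG_{n,k-1})/V_{n,n}$, and a maximal/Markov argument again bounds the second piece by a power of $L_{n,\delta}(\eps)$; feeding a high-probability bound $h$ on the mesh (and likewise on the clock discrepancy) into the Brownian modulus of continuity converts it into a displacement of size $\sqrt{h\log(1/h)}$ on the typical event for $B$.

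Finally, I would intersect the good events --- $V_{n,n}$ close to $1$, small mesh, $B$ meeting its modulus of continuity --- collect the estimates, and optimize over the truncation level $\eps\in[0,1]$. Since every error term has been written as a fixed power of $\eps$ plus a fixed power of $L_{n,\delta}(\eps)$, taking the infimum over $\eps$ reproduces precisely $\lambda_1+\lambda_2=\lambda$, while the extra factor $|\log\lambda|$ is just the $\log(1/h)$ from the modulus of continuity, with $h$ polynomial in $\lambda$. The exponent $\tfrac1{3+2\delta}$ in $\lambda_1$ and the admissible range $\delta\in[0,\tfrac34]\cup\{1\}$ fall out of matching the powers in this optimization against the exponents available in the conditional moment control of $T_{n,k}-T_{n,k-1}$. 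In my view the main obstacle is twofold: constructing the sequential Skorokhod embedding with the sharp \emph{conditional} expectation and moment identities (so that $T_{n,k}-V_{n,k}$ is genuinely a martingale with the right predictable bracket), and then doing the truncation-and-optimization bookkeeping precisely enough to land on $C\lambda|\log\lambda|$ rather than a weaker rate --- these inherent inefficiencies being exactly why, as noted in Remark~\ref{rmk:optimal}, the method cannot break the $n^{-1/4}$ barrier.
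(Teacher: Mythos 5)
This statement is quoted verbatim as Kubilius's Theorem~1 and the paper gives no proof of it at all; the \verb|\qed| immediately after the display is the paper's convention for imported results that are used as black boxes (as with Lemma~\ref{lem:CLT} and Proposition~\ref{prop:moment}). So there is no internal proof here to compare your attempt against. That said, your sketch does reconstruct the broad outline of the method behind Kubilius's result, and that method is indeed the martingale Skorokhod embedding pushed through Hall--Heyde and Haeusler --- exactly what the paper itself attributes it to in Remark~\ref{rmk:optimal}. The ingredients you name are the right ones: sequential Skorokhod embedding with conditional time identity $\E(T_{n,k}-T_{n,k-1}\mid\cG_{n,k-1})=\E(\xi_{n,k}^2\mid\cG_{n,k-1})$ and conditional moment control; decomposition of $\|X_n-B\|_\infty$ into clock discrepancy $\max_k|T_{n,k}-V_{n,k}|$, the $|V_{n,n}-1|$ rescaling error, and the polygonal interpolation error against the Brownian modulus of continuity; truncation at level $\eps$ with $\xi^2\mathbf{1}_{\{|\xi|>\eps\}}\le\eps^{-2\delta}|\xi|^{2+2\delta}$; and optimization over $\eps\in[0,1]$ to produce $\lambda_1$, with the $|\log\lambda|$ coming from the modulus of continuity. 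As a blind reconstruction this is faithful to the known strategy.

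It is, however, a strategy outline rather than a proof, and the pieces you yourself flag as ``the main obstacle'' --- building the conditional embedding with the sharp moment inequality $\E(|T_{n,k}-T_{n,k-1}|^r\mid\cG_{n,k-1})\le C_r\E(|\xi_{n,k}|^{2r}\mid\cG_{n,k-1})$, and carrying the truncation/optimization bookkeeping precisely enough to land on the exponent $\tfrac1{3+2\delta}$ and the admissible set $\delta\in[0,\tfrac34]\cup\{1\}$ --- are precisely where all the work of~\cite{Kubilius94,Haeusler84} lives, and they are only asserted here. In particular, the claim that these constraints ``fall out of matching the powers'' is not demonstrated; this is the part one cannot wave at. Since the paper deliberately treats Theorem~\ref{thm:Kub} as external input, the appropriate course is to cite Kubilius (as the paper does) rather than to reprove it, but as an exercise in reconstructing the argument your sketch identifies the correct architecture while leaving the genuinely hard estimates unproven.
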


\begin{lemma} \label{lem:Xn}
$\pi_1(X_n,B)\le C n^{-r(p)}$.
\end{lemma}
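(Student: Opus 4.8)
The plan is to apply Theorem~\ref{thm:Kub} directly, so the whole task reduces to bounding $\lambda_1$ and $\lambda_2$ by a multiple of $n^{-r(p)}$ (up to the harmless logarithmic factor, which I will absorb into the constant by slightly decreasing the exponent, or keep explicit if the statement can afford it). The bound on $\lambda_2$ is the easy half: by Proposition~\ref{prop:V} applied with $k=n$, $|V_{n,n}-1|$ has small $L^{2(p-1)}$-norm, namely $\ll n^{-1/2}$, so by Markov's inequality $\P(|V_{n,n}-1|>\eps^2)\ll \eps^{-2(p-1)}n^{-(p-1)}$; optimizing $\eps + \eps^{-2(p-1)}n^{-(p-1)}$ over $\eps$ gives $\lambda_2\ll n^{-(p-1)/(2p-1)}$, which is comfortably smaller than $n^{-r(p)}=n^{-(p-2)/(4p)}$ for $p>2$, so $\lambda_2$ is never the bottleneck.

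For $\lambda_1$, I would take $\delta$ to be the largest admissible value, i.e. $\delta=\frac34$, so that the truncated moment is $\E\sum_{j=1}^n |\xi_{n,j}|^{7/2}1_{\{|\xi_{n,j}|>\eps\}}$ and the exponent $1/(3+2\delta)=2/9$. Actually the cleanest route is to first bound the truncated sum by a genuine higher moment: since $|\xi_{n,j}|^{2+2\delta}1_{\{|\xi_{n,j}|>\eps\}}\le \eps^{-(q-2-2\delta)}|\xi_{n,j}|^{q}$ for any $q\ge 2+2\delta$, and recalling $\xi_{n,j}=n^{-1/2}\sigma^{-1}m\circ f^{n-j}$ with $|m|_p\ll\|v\|_\eta$ (Proposition~\ref{prop:decomp}), we get $\E\sum_{j=1}^n|\xi_{n,j}|^{q}= n^{1-q/2}\sigma^{-q}\,n\,|m|_q^q$ — wait, that needs $q\le p$. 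So I would choose $q=p$ and $\delta$ with $2+2\delta=p$ if $p\le\frac{11}{2}$, i.e. $\delta=\frac{p-2}{2}\le\frac34$ exactly when $p\le\frac72$; for larger $p$ one simply fixes $\delta=\frac34$ and uses $q=p$ (allowed since $p>\frac72>2+2\delta$ is false — let me not over-optimize here). The point is: with $q=p$ one gets $\E\sum_j|\xi_{n,j}|^{2+2\delta}1_{\{|\xi_{n,j}|>\eps\}}\ll \eps^{-(p-2-2\delta)} n^{1-p/2}\cdot n = \eps^{-(p-2-2\delta)}n^{2-p/2}$, hence $\lambda_1\ll \inf_\eps\{\eps^{1/2}+(\eps^{-(p-2-2\delta)}n^{2-p/2})^{1/(3+2\delta)}\}$. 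Optimizing this $\eps$-trade-off and then optimizing over the free choice of $\delta\in[0,\frac34]$ should produce exactly the exponent $r(p)=\frac{p-2}{4p}$; I expect the optimal choice to be $\delta=\frac34$ for all $p>2$ in the relevant range, giving $\lambda_1\ll n^{-(p-2)/(4p)}$ after the algebra, with the constraint $\delta\le\frac34$ being precisely what caps $\sup r$ at $\frac14$ (consistent with Remark~\ref{rmk:optimal}).

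Combining, $\lambda=\lambda_1+\lambda_2\ll n^{-r(p)}$, and Theorem~\ref{thm:Kub} yields $\pi_1(X_n,B)\ll \lambda|\log\lambda|\ll n^{-r(p)}\log n$; to get the clean bound $n^{-r(p)}$ one replaces $r(p)$ by $r(p)-\delta'$ for arbitrary $\delta'>0$, or more precisely one notes that the constant $C$ is allowed to absorb a fixed shift — but since the theorem statement has a hard exponent, I would actually prove the slightly stronger $\lambda\ll n^{-r(p)}\cdot(\text{no log})$ by being a hair more generous in the $\eps$-optimization, or simply carry the $\log$ and note it's subsumed. The main obstacle I anticipate is purely bookkeeping: making the joint optimization over $\eps$ and $\delta$ land on the stated $r(p)$ with the correct behaviour across the whole range $p\in(2,\infty)$, and verifying that the $L^p$ control of $m$ (rather than $L^{2(p-1)}$) is the binding moment bound so that $\lambda_1$ — not $\lambda_2$ — is what determines the rate. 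No deep idea is needed beyond the inputs already assembled in Sections~\ref{sec:KKM} and the present section.
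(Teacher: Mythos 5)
Your plan — apply Theorem~\ref{thm:Kub}, show $\lambda_2$ is subdominant, and optimize the truncated-moment bound for $\lambda_1$ over $\eps$ and $\delta$ — is indeed the paper's strategy, and your direct truncation inequality $|\xi|^{2+2\delta}1_{\{|\xi|>\eps\}}\le\eps^{-(q-2-2\delta)}|\xi|^q$ with $q=p$ gives the same estimate as the paper's H\"older--plus--Markov step. However, several details in your sketch are off in ways that matter.

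First, the stationarity computation has a spurious factor of $n$: since $\xi_{n,j}=n^{-1/2}\sigma^{-1}m\circ f^{n-j}$ and there are $n$ terms, $\E\sum_{j=1}^n|\xi_{n,j}|^p = n\cdot n^{-p/2}\sigma^{-p}|m|_p^p = n^{1-p/2}\sigma^{-p}|m|_p^p$; your ``$n^{1-q/2}\sigma^{-q}\,n\,|m|_q^q$'' double-counts the $n$, and carrying that forward gives the wrong power $n^{2-p/2}$ inside the infimum (for $p<4$ this even makes the bound useless). Second, Theorem~\ref{thm:Kub} allows $\delta\in[0,\tfrac34]\cup\{1\}$, not merely $\delta\le\tfrac34$; the paper takes $\delta=\tfrac{p-2}{2}$ for $2<p\le\tfrac72$, $\delta=\tfrac34$ for $\tfrac72\le p<4$, and $\delta=1$ for $p\ge4$. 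You overlook $\delta=1$ entirely, and you also cannot take $\delta=\tfrac34$ for $p<\tfrac72$ since the constraint $2+2\delta\le p$ fails there. Third — and this is the conceptual gap — the optimization does \emph{not} land exactly on $r(p)=\tfrac{p-2}{4p}$. With the correct arithmetic one gets, piecewise in $\delta$, the exponent $r_1(p)=\tfrac{p-2}{2(2p-1-2\delta)}$, which equals $\tfrac{p-2}{2p+2}$, $\tfrac{p-2}{4p-5}$, or $\tfrac{p-2}{4p-6}$ in the three regimes; each of these is strictly larger than $r(p)$. That strict inequality is not incidental: it is precisely what lets $n^{-r_1(p)}\log n\ll n^{-r(p)}$, so the logarithmic loss from $\lambda|\log\lambda|$ is absorbed cleanly rather than being ``subsumed'' by hand-waving or by weakening the exponent to $r(p)-\delta'$ (which would not prove the stated hard exponent). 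Your remark that the $\delta\le\tfrac34$ cap is ``precisely what caps $\sup r$ at $\tfrac14$'' is also not quite right: the paper uses $\delta=1$ for large $p$, and $r_1(p)=\tfrac{p-2}{4p-6}\to\tfrac14$ as $p\to\infty$; the $\tfrac14$ barrier is the Skorokhod-embedding obstruction (Remark~\ref{rmk:optimal}), not the restriction on $\delta$. Finally, a smaller slip: Markov on $|V_{n,n}-1|$ at level $\eps^2$ with the $L^{2(p-1)}$ bound gives $\eps^{-4(p-1)}n^{-(p-1)}$, not $\eps^{-2(p-1)}n^{-(p-1)}$; the optimized $\lambda_2\ll n^{-(p-1)/(4p-3)}$ is still subdominant, so the conclusion survives, but the exponent you quoted is wrong.
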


\begin{proof}
Let $\lambda=\lambda_1+\lambda_2$ be as in Theorem~\ref{thm:Kub}.
We claim that
\begin{align*}
\lambda_1 & \ll \|v\|_\eta^{r'} n^{-r_1(p)},  \qquad 
\lambda_2  \ll \|v\|_\eta^{(4p-4)/(4p-3)} n^{-(p-1)/(4p-3)},
\end{align*}
where $r_1(p)=\begin{cases} \frac{p-2}{2p+2} & 2< p\le \frac72 \\
\frac{p-2}{4p-5} & \frac72\le p<4 \\ \frac{p-2}{4p-6} & p\ge4
\end{cases}$.
Then $\lambda_2\ll\lambda_1$ and
it follows from Theorem~\ref{thm:Kub} that
$\pi_1(X_n,B)\ll n^{-r_1(p)}\log n$.
In all cases, $r_1(p)>r(p)$ so the result follows.

First, we verify the estimate for $\lambda_1$.
Choose $\delta\in[0,\frac34]\cup\{1\}$ greatest such that $2+2\delta\le p$.
In other words,
$\delta=\begin{cases}   \frac{p-2}{2} & 2\le p\le \frac72 \\ \frac34 & \frac72\le p<4 \\ 1 & p\ge 4 \end{cases}$.
By stationarity,
\[
\E\sum_{j=1}^n|\xi_{n,j}|^{2+2\delta}1_{\{|\xi_{n,j}|\ge\eps\}}
=\sigma^{-(2+2\delta)}n^{-\delta}\E(|m|^{2+2\delta}1_{\{|m|\ge \eps\sigma n^{1/2}\}}).
\]
By H\"older's inequality, and then Markov's inequality,
\begin{align*}
\sigma^{2+2\delta}\E\sum_{j=1}^n|\xi_{n,j}|^{2+2\delta}1_{\{|\xi_{n,j}|\ge\eps\}}
& \le n^{-\delta} |m|_p^{2+2\delta}\, \mu(|m|\ge \eps\sigma n^{1/2})^{(p-2-2\delta)/p}
\\ & \le n^{-\delta} |m|_p^{2+2\delta}\, \Big(\frac{|m|_p^p}{\eps^p\sigma^p n^{p/2}}\Big)^{(p-2-2\delta)/p}
\\ & =\sigma^{-(p-2-2\delta)}|m|_p^p\,\eps^{-(p-2-2\delta)}n^{-(p-2)/2}.
\end{align*}
Hence
\begin{align*}
\lambda_1 & \ll \inf_{0\le\eps\le1}\{\eps^{1/2}+\|v\|_\eta^{p/(3+2\delta)}\eps^{-(p-2-2\delta)/(3+2\delta)}n^{-(p-2)/(6+4\delta)}\}
\\ & \le 2\|v\|_\eta^{p/(2p-2\delta-1)}n^{-(p-2)/(4p-4\delta-2)}
 = \|v\|_\eta^{p/(2p-2\delta-1)}n^{-r_1(p)}.
\end{align*}

Second, we verify the estimate for $\lambda_2$.
By Proposition~\ref{prop:V} and Markov's inequality,
$\P(|V_{n,n}-1|>\eps^2)\ll \|v\|_\eta^{4(p-1)}\eps^{-4(p-1)}n^{-(p-1)}$.
Hence
\[
\lambda_2\ll \inf_{0\le\eps\le1}\{\eps+\|v\|_\eta^{4(p-1)}\eps^{-4(p-1)}n^{-(p-1)}\}
\le 2\|v\|_\eta^{(4p-4)/(4p-3)}n^{-(p-1)/(4p-3)},
\]
completing the proof.
\end{proof}

The integer $k$ in~\eqref{eq:Xn} is a random variable
$k=k_{n,t}:\Delta\to\{0,\dots,n\}$ given by 
\[
V_{n,k}\le t V_{n,n} < V_{n,k+1}.
\]

\begin{prop}  \label{prop:k}
$\big|\supI|k-[nt]|\big|_{2(p-1)} \le Cn^{\frac12}$.
\end{prop}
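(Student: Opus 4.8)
The plan is to derive everything from Proposition~\ref{prop:V}. Set $A_n=\max_{k\le n}\big|V_{n,k}-\tfrac kn\big|$, so that Proposition~\ref{prop:V} reads $|A_n|_{2(p-1)}\le C'\|v\|_\eta^2\,n^{-\frac12}$. The key structural observations are that $V_{n,k}$ is non-decreasing in $k$ (since $\E(\xi_{n,j}^2\mid\cG_{n,j-1})\ge0$), that it stays within $A_n$ of the linear profile $\tfrac kn$, and that $V_{n,n}$ stays within $A_n$ of $1$; hence the random time change $k=k_{n,t}$, defined by $V_{n,k}\le tV_{n,n}<V_{n,k+1}$, cannot stray far from $[nt]$, with the discrepancy controlled by $A_n$ uniformly in $t$.

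First I would fix a point of $\Delta$ (suppressed from the notation) together with $t\in[0,1]$, and abbreviate $V_k=V_{n,k}$, $a=A_n$. From the left-hand inequality defining $k$ and $t\le1$, $\tfrac kn-a\le V_k\le tV_{n,n}\le t(1+a)\le t+a$, so $k\le nt+2na$. From the right-hand inequality, $t-a\le t(1-a)\le tV_{n,n}<V_{k+1}\le\tfrac{k+1}{n}+a$, so $k>nt-1-2na$. Thus $|k-nt|\le 1+2na$, and therefore $|k-[nt]|\le 2+2na$ --- a bound that does not depend on $t$. Taking the supremum over $t\in[0,1]$ gives $\supI|k-[nt]|\le 2+2nA_n$ pointwise on $\Delta$, and then $\big|\supI|k-[nt]|\big|_{2(p-1)}\le 2+2n|A_n|_{2(p-1)}\le 2+2C'\|v\|_\eta^2\,n^{\frac12}\le Cn^{\frac12}$, as required.

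There is no genuinely hard step here; the points requiring a little care are: (i) checking that $V_{n,\cdot}$ is non-decreasing, so that $k_{n,t}$ is well defined, together with the fact that $V_{n,n}>0$ almost surely (which is where $\sigma^2>0$ enters --- the degenerate case $\sigma^2=0$ being trivial, since then $v\circ\pi_\Delta$ is a coboundary and $W_n\to0$ with the rate coming directly from Proposition~\ref{prop:decomp}); (ii) ensuring that the $t$-dependence disappears before one passes to $L^{2(p-1)}$ norms, which works precisely because the pointwise bound $1+2na$ is uniform in $t$; and (iii) the harmless endpoint conventions $k_{n,0}=0=[0]$ and $k_{n,1}=n=[n]$, for which the stated inequality is immediate.
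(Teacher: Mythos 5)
Your proof is correct and follows essentially the same route as the paper's, which sets $\tV_{n,j}=nV_{n,j}-j$ (so $\max_{j}|\tV_{n,j}|=nA_n$ in your notation), squeezes $k-nt$ between bounds of order $\max_j|\tV_{n,j}|$ obtained from the two defining inequalities for $k$, and then invokes Proposition~\ref{prop:V}. If anything you are slightly more careful than the published argument about the constant factors and the edge cases ($t=1$, and the degenerate case $\sigma^2=0$).
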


\begin{proof}
Set $\tV_{n,j}=nV_{n,j}-j$.
Then
$\tV_{n,k}+k\le t\tV_{n,n}+nt < \tV_{n,k+1}+k+1$,
and it follows that $k-nt$ satisfies the inequalities
\[
k-nt\le t\tV_{n,n}-\tV_{n,k}\le \tV_{n,n},
\]
and
\[
k-nt>
t\tV_{n,n}-\tV_{n,k+1}-1\ge
-\tV_{n,k+1}-1.
\]
Hence
\[
|k-[nt]|\le |k-nt|+1\le \max_{j\le n+1}|\tV_{n,j}|+2
=n\max_{j\le n+1}|V_{n,j}-{\SMALL\frac{j}{n}}|+2,
\]
and so the result follows from Proposition~\ref{prop:V}.
\end{proof}

\subsection{Passing from $X_n$ to $W_n$}

\begin{prop} \label{prop:pi}
Let $Y$, $Y'\in C[0,1]$ be random elements defined on a common probability space, and let $\eps_0,\,\eps_1>0$, $q\ge1$.
\\[.75ex]
(a) If 
$\P\big(\supI|Y-Y'|\ge\eps_0)\le\eps_1$, then 
$\pi_1(Y,Y')\le\max\{\eps_0,\eps_1\}$.
\\[.75ex]
(b)
If 
$\big|\supI|Y-Y'|\big|_q\le \eps_0$,
then 
$\pi_1(Y,Y')\le \eps_0^{q/(q+1)}$.
\end{prop}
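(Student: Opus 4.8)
The plan is to unwind the definition of the Prokhorov metric $\pi_1$ and exploit the fact that two processes which are uniformly close on $[0,1]$ must assign nearly the same probability to neighbourhoods of closed sets. For part (a), let $A\in\cB$ be closed and let $\Omega_0=\{\supI|Y-Y'|<\eps_0\}$, so $\P(\Omega^c_0)\le\eps_1$ by hypothesis. On $\Omega_0$, the event $\{Y\in A\}$ forces $\{Y'\in A^{\eps_0}\}$, since $Y'$ lies within supnorm distance $\eps_0$ of a point of $A$. Hence $\P(Y\in A)\le\P(\{Y'\in A^{\eps_0}\}\cap\Omega_0)+\P(\Omega_0^c)\le\P(Y'\in A^{\eps_0})+\eps_1$. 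Writing $\eps=\max\{\eps_0,\eps_1\}$, this gives $\P(Y\in A)\le\P(Y'\in A^{\eps})+\eps$ for every closed $A$ (monotonicity of $A^\eps$ in $\eps$ handles the case $\eps_0<\eps$), which is exactly the condition defining $\pi_1(Y,Y')\le\eps$; a symmetric argument is not even needed since the infimum in the definition of $\pi_1$ is symmetric in the roles of $Y,Y'$ once the inequality holds for all closed $A$ and all such $A$. Actually, to be safe, I would note the definition as stated is one-sided and remark that the same computation with $Y,Y'$ interchanged yields the matching bound, so $\pi_1(Y,Y')\le\max\{\eps_0,\eps_1\}$.

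For part (b), the idea is to convert the $L^q$ bound into a tail bound and then invoke part (a) with a well-chosen truncation level. By Markov's inequality applied to the nonnegative random variable $\supI|Y-Y'|$, for any $t>0$ we have
\[
\P\big(\supI|Y-Y'|\ge t\big)\le t^{-q}\,\big|\supI|Y-Y'|\big|_q^q\le t^{-q}\eps_0^q.
\]
Now apply part (a) with $\eps_0$ replaced by $t$ and $\eps_1$ replaced by $t^{-q}\eps_0^q$, giving $\pi_1(Y,Y')\le\max\{t,\,t^{-q}\eps_0^q\}$ for every $t>0$. Optimising, the two terms balance when $t=t^{-q}\eps_0^q$, i.e. $t^{q+1}=\eps_0^q$, so $t=\eps_0^{q/(q+1)}$, and the common value is $\eps_0^{q/(q+1)}$. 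This yields $\pi_1(Y,Y')\le\eps_0^{q/(q+1)}$, as claimed.

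I do not anticipate a serious obstacle here; the only point requiring a little care is the direction of the Prokhorov inequality and making sure the truncation argument in (b) is valid for the boundary value $t=\eps_0^{q/(q+1)}$ (the Markov bound and part (a) both hold with $\ge$ versus $>$ up to the usual harmless adjustments, and one may take a limit $t\downarrow\eps_0^{q/(q+1)}$ if a strict inequality is needed). A minor subtlety is that part (a) requires $\eps_0,\eps_1>0$, so in (b) one should restrict to $t>0$ with $t^{-q}\eps_0^q>0$, which is automatic unless $\eps_0=0$, in which case $Y=Y'$ a.s. and $\pi_1(Y,Y')=0$ trivially.
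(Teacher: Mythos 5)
Your proof is correct and follows essentially the same route as the paper: part (a) is exactly the "immediate from the definition" argument that the paper leaves to the reader, and part (b) is the same Markov-inequality-plus-optimal-threshold reduction to part (a), with the threshold $t=\eps_0^{q/(q+1)}$ chosen so that the two terms in $\max\{t,\,t^{-q}\eps_0^q\}$ balance. The only cosmetic difference is that you introduce a free parameter $t$ and optimise, whereas the paper directly posits the balancing value of $\eps_1$; the content is identical.
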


\begin{proof}
(a) 
This is immediate from the definition of $\pi_1$.
\\[.75ex]
(b)
By Markov's inequality,
$\P(\supI|Y-Y'|\ge \eps)\le \eps^{-q}\eps_0^q$.
In particular, $\P(\supI|Y-Y'|\ge \eps_1)\le\eps_1$
if $\eps_0$ satisfies $\eps_1^{-q}\eps_0^q =\eps_1$.
In other words,
$\eps_1 =\eps_0^{q/(q+1)}$.
Hence the result follows from part~(a).
\end{proof}

\begin{prop} \label{prop:Z}
\footnote{This estimate was suggested to us by Alexey Korepanov.}
For $n\ge1$, define $Z_n=\max_{\,0\le i,\ell\le n^{\frac12}}|v_\ell|\circ T^{i[n ^{\frac12}]}$.
Then
\begin{itemize}
\item[(a)] $|\sum_{j=a}^{b-1}v\circ T^j|\le Z_n((b-a)(n^{\frac12}-1)^{-1}+3)$
for all $0\le a<b\le n$.
\item[(b)] 
 $|Z_n|_{2(p-1)}\le C'\|v\|_\eta\, n^{\frac14+\frac{1}{4(p-1)}}$ for all $n\ge1$.
\end{itemize}
\end{prop}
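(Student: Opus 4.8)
The plan is to prove the two parts separately, using the structure of $Z_n$ as a maximum over a sparse grid of points in $[0,n]$ combined with partial-sum bounds at those points.

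For part (a), I would first write $[a,b) \subset \bigcup_i [i[n^{1/2}], (i+1)[n^{1/2}])$ where $i$ ranges over those integers with $i[n^{1/2}] < b$ and $(i+1)[n^{1/2}] > a$. The number of such blocks is at most $(b-a)/[n^{1/2}] + 2 \le (b-a)(n^{1/2}-1)^{-1} + 2$ (using $[n^{1/2}] \ge n^{1/2}-1$, and being a little careful about the two boundary blocks which may be only partially contained in $[a,b)$). On each such block — whether full or truncated — the corresponding partial sum $\sum_{j} v\circ T^j$ restricted to $[a,b)$ is of the form $(v_\ell \circ T^{i[n^{1/2}]})$ for some $0\le \ell \le [n^{1/2}] \le n^{1/2}$, hence bounded in absolute value by $Z_n$. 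Summing over the blocks and collecting constants (the ``$+3$'' absorbs the two partial end-blocks plus rounding slack) gives the claimed bound. The one point requiring care is the exact bookkeeping of how many grid translates $T^{i[n^{1/2}]}$ appear and that $\ell \le n^{1/2}$ always, so that each piece is genuinely dominated by $Z_n$; this is routine.

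For part (b), the key is that $Z_n$ is a maximum over at most $(n^{1/2}+1)^2 \le 4n$ random variables, each of the form $|v_\ell|\circ T^{i[n^{1/2}]}$ with $\ell \le n^{1/2}$. Since $T$ preserves $\mu$, each such term has the same distribution as $|v_\ell|$ with $\ell \le n^{1/2}$, and $\max_{\ell \le n^{1/2}}|v_\ell| \le \max_{\ell \le [n^{1/2}]+1}|v_\ell|$, whose $L^{2(p-1)}$ norm is $\ll \|v\|_\eta (n^{1/2})^{1/2} = \|v\|_\eta n^{1/4}$ by Proposition~\ref{prop:moment} (applied with $n$ replaced by roughly $n^{1/2}$). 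Then I would use the standard bound $|\max_i |Y_i|\,|_q \le (\sum_i |Y_i|_q^q)^{1/q} = N^{1/q}\max_i |Y_i|_q$ for $N$ identically-distributed-up-to-measure-preservation terms, with $N \ll n$ and $q = 2(p-1)$, giving $|Z_n|_{2(p-1)} \ll n^{1/(2(p-1))}\cdot \|v\|_\eta n^{1/4} = \|v\|_\eta n^{1/4 + 1/(4(p-1))}$, as required; the constant depends only on $p$, not $v$, so it is a $C'$ constant.

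The main obstacle — such as it is — is purely cosmetic: part (a) needs an honest count of how the interval $[a,b)$ decomposes into translated grid blocks, making sure the truncated first and last blocks are each still controlled by a single term $|v_\ell|\circ T^{i[n^{1/2}]}$ with $\ell \le [n^{1/2}]$, and that the total count does not exceed $(b-a)(n^{1/2}-1)^{-1}+3$. Part (b) is an immediate consequence of Proposition~\ref{prop:moment} and the crude $\ell^q$-bound for the maximum of $O(n)$ terms; no real difficulty arises there.
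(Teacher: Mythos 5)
Your high-level plan for both parts is the right one — blocks of length $[n^{1/2}]$ anchored at the grid points $i[n^{1/2}]$, plus Proposition~\ref{prop:moment} — and this matches the paper. But there are two concrete errors.

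In part (a), the claim that ``the truncated first and last blocks are each still controlled by a single term $|v_\ell|\circ T^{i[n^{1/2}]}$'' is false for the first block. If $a$ is not a grid point, the leftmost truncated block contributes $\sum_{j=a}^{(\ell_1+1)[n^{1/2}]-1}v\circ T^j = v_{(\ell_1+1)[n^{1/2}]-a}\circ T^{a}$, whose base point is $T^a$, not $T^{\ell_1[n^{1/2}]}$; this is not of the form $v_\ell\circ T^{i[n^{1/2}]}$ and is not dominated by $Z_n$. (The last truncated block is fine, since its left endpoint \emph{is} a grid point.) The correct fix — and what the paper does, in effect — is to extend the sum back to the previous grid point and subtract: write
\[
\sum_{j=a}^{b-1}v\circ T^j=\sum_{j=\ell_1[n^{1/2}]}^{b-1}v\circ T^j-\sum_{j=\ell_1[n^{1/2}]}^{a-1}v\circ T^j,
\]
so the first block is controlled by \emph{two} terms of the right form, giving a total of $\ell_2-\ell_1+2$ terms; combined with $\ell_2-\ell_1\le (b-a)(n^{1/2}-1)^{-1}+1$ this gives exactly the ``$+3$''. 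As written, your argument treats the first truncated block as one term, which does not go through.

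In part (b), your calculation does not give the claimed exponent. You take the maximum over $N\ll n$ pairs $(i,\ell)$, each with $L^q$-norm $\ll\|v\|_\eta n^{1/4}$, and invoke $|\max_i|Y_i||_q\le N^{1/q}\max_i|Y_i|_q$ with $q=2(p-1)$; this yields $\|v\|_\eta\, n^{1/4+1/(2(p-1))}$, not $\|v\|_\eta\, n^{1/4+1/(4(p-1))}$ (the line ``$n^{1/(2(p-1))}\cdot n^{1/4}=n^{1/4+1/(4(p-1))}$'' is an arithmetic slip). The extra factor $n^{1/(4(p-1))}$ is not harmless: it would destroy the estimate in Lemma~\ref{lem:Mn} for $p\le3$. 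The correct approach, which the paper uses, is to group the $\ell$-maximum \emph{first}: write
\[
Z_n=\max_{0\le i\le n^{1/2}}\Big(\max_{0\le\ell\le n^{1/2}}|v_\ell|\Big)\circ T^{i[n^{1/2}]},
\]
which is a maximum over only $N\ll n^{1/2}$ identically distributed random variables, each equal in law to $\max_{\ell\le n^{1/2}}|v_\ell|$. Proposition~\ref{prop:moment} controls the $L^{2(p-1)}$-norm of this inner maximum directly (that is what the proposition gives you), so you pay only $(n^{1/2})^{1/(2(p-1))}=n^{1/(4(p-1))}$ for the outer maximum, recovering the stated bound.
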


\begin{proof}
(a)  Choose $0\le\ell_1\le\ell_2\le \sqrt n$ greatest such that
$\ell_1[\sqrt n]\le a$ and $\ell_2[\sqrt n]\le b$.
Then 
$\ell_2-\ell_1\le 
\frac{b-a}{\sqrt n-1}+1$.
It follows that
\begin{align*}
\Big|\sum_{j=a}^{b-1}v\circ T^j\Big|
\le (\ell_2-\ell_1+2)Z_n \le \Big(\frac{b-a}{\sqrt n-1}+3\Big)Z_n,
\end{align*}
as required.
\\[.75ex]
(b) We have
\begin{align*}
\int_\Delta|Z_n|^{2(p-1)}\,d\mu_\Delta & \le 
\sum_{i\le n^{1/2}}\int_\Delta\max_{\ell\le n^{1/2}} |v_\ell|^{2(p-1)}\,d\mu_\Delta
 \ll n^{\frac12}\int_\Delta\max_{\ell\le n^{1/2}} |v_\ell|^{2(p-1)}\,\mu_\Delta.
\end{align*}
By Proposition~\ref{prop:moment},
\begin{align*}
|Z_n|_{2(p-1)} & \ll n^{\frac{1}{4(p-1)}}\Big|\max_{\ell\le n^{1/2}} |v_\ell|\Big|_{2(p-1)}
 \ll \|v\|_\eta \,n^{\frac14+\frac{1}{4(p-1)}}. 
\end{align*}
\end{proof}

Following~\cite[Lemma~4.8]{KM16}, we
define the linear functional 
\[
g:C[0,1]\to C[0,1], \qquad g(u)(t)=u(1)-u(1-t).
\]

\begin{lemma}  \label{lem:Mn}
$\pi_1(g\circ W_n\circ\pi_\Delta,\sigma X_n) \le Cn^{-r(p)}$.
\end{lemma}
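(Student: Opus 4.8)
The plan is to compare the martingale-sum process $\sigma X_n$ with the (reversed, via $g$) interpolated Birkhoff process $g\circ W_n\circ\pi_\Delta$ on the common probability space $(\Delta,\mu_\Delta)$, and to invoke Proposition~\ref{prop:pi}(b) with $q=2(p-1)$. For this I need an $L^{2(p-1)}$ bound on $\supI|g\circ W_n\circ\pi_\Delta - \sigma X_n|$ of order $n^{-\beta}$ with $\beta$ large enough that $\beta\cdot\frac{2p-2}{2p-1}\ge r(p)=\frac{p-2}{4p}$; in fact the natural bound will turn out to be of order $n^{-1/2}$ up to lower-order $n^{1/(4(p-1))}$-type factors coming from Proposition~\ref{prop:Z}(b), which comfortably beats what is needed.

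First I would unwind the definitions. Using the martingale--coboundary decomposition $v\circ\pi_\Delta=m+\chi\circ f-\chi$, the Birkhoff sum along the extension telescopes: $(v\circ\pi_\Delta)_\ell = \sum_{i=0}^{\ell-1}m\circ f^i + \chi\circ f^\ell-\chi = \sigma\sqrt n\,\sum_{j=n-\ell+1}^{n}\xi_{n,j} \cdot(\text{reindex}) + (\chi\circ f^\ell-\chi)$. Comparing with $X_n(t)=\sum_{j=1}^{k}\xi_{n,j}$ at $t=V_{n,k}/V_{n,n}$, the reversal functional $g$ is exactly what aligns the partial sums $\sum_{j=1}^k\xi_{n,j}$ with $\sigma^{-1}n^{-1/2}$ times a \emph{suffix} sum of $m\circ f^i$, i.e.\ with $\sigma^{-1}n^{-1/2}(v_\ell\circ\pi_\Delta$-type terms read from the top$)$. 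So at the grid points $t$ determined by $V_{n,k}$, the two processes differ by $\sigma^{-1}n^{-1/2}$ times a coboundary term $\chi\circ f^{(\cdot)}-\chi$, which by Proposition~\ref{prop:decomp} has $L^p$-norm (hence $L^{2(p-1)}$-norm, since $2(p-1)\ge p$ is false in general — I must be careful here and use $L^p\supseteq L^{2(p-1)}$ only when $2(p-1)\le p$, i.e.\ $p\le2$; for $p>2$ I instead use the $\max_{j\le n}$ bound $\big|\max_{j\le n}|\chi\circ f^j-\chi|\big|_p\le C\|v\|_\eta n^{1/p}$ and work in $L^p$, which is enough after Markov).

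The genuine work is in two places. (1) The grid on which $X_n$ is defined is the \emph{random} time-change $t=V_{n,k}/V_{n,n}$, not $t=k/n$; and $W_n\circ\pi_\Delta$ lives on the grid $t=j/n$. So at a given $t\in[0,1]$, $X_n(t)$ involves $\sum_{j=1}^k\xi_{n,j}$ with $k=k_{n,t}$, while $g\circ W_n\circ\pi_\Delta$ at that $t$ involves $v_{[nt]}\circ\pi_\Delta$ read from the top — and $k_{n,t}$ differs from $[nt]$ by $\big|\supI|k-[nt]|\big|_{2(p-1)}\le Cn^{1/2}$ (Proposition~\ref{prop:k}). The discrepancy between $\sum_{j=1}^{k}\xi_{n,j}$ and $\sum_{j=1}^{[nt]}\xi_{n,j}$ is $\sigma^{-1}n^{-1/2}$ times a Birkhoff sum of $m$ over an interval of length $|k-[nt]|$, which (re-telescoping back to $v$) I bound using Proposition~\ref{prop:Z}(a): a block sum $\big|\sum_{j=a}^{b-1}v\circ T^j\big|\le Z_n\big((b-a)(\sqrt n-1)^{-1}+3\big)$. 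With $b-a=|k-[nt]|\le Cn^{1/2}$ this is $O(Z_n)$, and $|Z_n|_{2(p-1)}\le C'\|v\|_\eta n^{1/4+1/(4(p-1))}$ by Proposition~\ref{prop:Z}(b); dividing by $\sqrt n$ gives $O(n^{-1/4+1/(4(p-1))})$ in $L^{2(p-1)}$. (2) Linear interpolation: both processes are linear between their grid points, and on each process the oscillation within one grid cell is controlled by the same block-sum estimate, so the interpolation contributes nothing worse. Assembling, $\big|\supI|g\circ W_n\circ\pi_\Delta-\sigma X_n|\big|_{2(p-1)}\ll \|v\|_\eta\,n^{-1/4+1/(4(p-1))}$ (plus the coboundary term, handled via $L^p$ and Markov, which is even smaller after optimizing). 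Feeding $\eps_0= Cn^{-1/4+1/(4(p-1))}$ and $q=2(p-1)$ into Proposition~\ref{prop:pi}(b) yields $\pi_1\le \eps_0^{(2p-2)/(2p-1)}\ll n^{-r(p)}$, since a short computation shows $\big(\frac14-\frac1{4(p-1)}\big)\frac{2p-2}{2p-1}\ge\frac{p-2}{4p}=r(p)$ for all $p>2$.

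The main obstacle I anticipate is bookkeeping the reversal $g$ correctly: one must check that $g$ turns the ``suffix sums of $m\circ f^i$'' that appear in $X_n$ into genuine forward Birkhoff sums $v_\ell\circ\pi_\Delta$ of the observable, so that Propositions~\ref{prop:Z} and~\ref{prop:moment} (stated for $v_\ell$) are applicable, and simultaneously that the random time-change $k_{n,t}$ is being compared against the right deterministic index $[nt]$. A secondary subtlety is that the coboundary $\chi$ is only in $L^{p-1}$, not $L^p$, so any step that needs an $L^p$ (or $L^{2(p-1)}$) bound on $\chi$ itself is not allowed — only the differences $\chi\circ f^j-\chi$ and their running maxima have the good moments, which is exactly why the telescoping must be arranged so that only such differences appear.
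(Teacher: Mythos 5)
Your approach is essentially the paper's: unwind $g\circ W_n\circ\pi_\Delta-\sigma X_n$ via the martingale--coboundary decomposition so that only differences $\chi\circ f^j-\chi$ appear, reduce the remaining discrepancy to a block sum of $v$ over an interval of random length $|k-[nt]|$, control that via Propositions~\ref{prop:k} and~\ref{prop:Z}, and finish with Proposition~\ref{prop:pi}(b). Your identification of the two sources of error (coboundary tails and the random time change) and the care with $\chi\in L^{p-1}$ are all correct.

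There is one genuine gap, in the moment exponent. When you write ``with $b-a=|k-[nt]|\le Cn^{1/2}$ this is $O(Z_n)$ \dots\ in $L^{2(p-1)}$,'' you are treating the $L^{2(p-1)}$ bound $\big|\supI|k-[nt]|\big|_{2(p-1)}\le Cn^{1/2}$ from Proposition~\ref{prop:k} as though it were an almost sure bound. It is not: on the event where $|k-[nt]|$ is atypically large, the block sum is not $O(Z_n)$. Since $Z_n\big(n^{-1/2}\supI|k-[nt]|+3\big)$ is a product of two random factors, each of which has moments only up to order $2(p-1)$ (Propositions~\ref{prop:Z}(b) and~\ref{prop:k}), the best Hölder splitting is Cauchy--Schwarz, which yields an $L^{p-1}$ bound, not an $L^{2(p-1)}$ one:
\[
\Big|Z_n\big(n^{-\frac12}\supI|k-[nt]|+3\big)\Big|_{p-1}\le |Z_n|_{2(p-1)}\,\big|n^{-\frac12}\supI|k-[nt]|+3\big|_{2(p-1)}\ll n^{\frac14+\frac1{4(p-1)}}.
\]
Dividing by $n^{1/2}$ gives $\big|\supI|g\circ W_n\circ\pi_\Delta-\sigma X_n|\big|_{p-1}\ll n^{-\frac14\frac{p-2}{p-1}}$, and Proposition~\ref{prop:pi}(b) must then be applied with $q=p-1$ (not $q=2(p-1)$), yielding $\pi_1\ll n^{-\frac14\frac{p-2}{p-1}\cdot\frac{p-1}{p}}=n^{-r(p)}$. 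So the conclusion is recovered exactly, with no slack; the apparent improvement $n^{-\frac{p-2}{2(2p-1)}}$ you computed is an artifact of the unjustified $L^{2(p-1)}$ claim. (Relatedly, the parenthetical remark about $L^p\supseteq L^{2(p-1)}$ has the inclusion backwards --- $2(p-1)\ge p$ holds for $p\ge2$, so on a probability space $L^{2(p-1)}\subseteq L^p$ --- but your decision to bound the coboundary term in $L^p$ is correct and suffices, since $L^p\subset L^{p-1}$.)
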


\begin{proof}
Define the piecewise constant
process $V_n'(t)=n^{-\frac12}\sum_{n-[nt]}^{n-k-1}v\circ T^j$.
Then
\begin{align*}
g & \circ W_n(t)  \circ\pi_\Delta-\sigma X_n(t)
 = n^{-\frac12}\sum_{j=n-[nt]}^{n-1}v\circ T^j\circ\pi_\Delta-n^{-\frac12}\sum_{j=1}^k m\circ f^{n-j}+E_n(t)
\\ 
 &=n^{-\frac12}\sum_{j=n-[nt]}^{n-1}v\circ T^j\circ\pi_\Delta - n^{-\frac12}
\Big(\sum_{j=n-k}^{n-1}v\circ \pi_\Delta\circ f^j-\chi\circ f^n+\chi\circ f^{n-k}\Big) +E_n(t)
\\ &
 =V_n'(t)\circ \pi_\Delta+
n^{-\frac12}(\chi\circ f^{n-k}-\chi\circ f^n)+ E_n(t),
\end{align*}
where $|E_n(t)|_\infty\le n^{-\frac12}|v|_\infty$.
By Proposition~\ref{prop:decomp},
\[
\big|n^{-1/2}\supI|\chi\circ f^{n-k}-\chi\circ f^n|\big|_p
\le 2n^{-\frac12}\big|\max_{j\le n}|\chi\circ f^j-\chi|\big|_p
\ll n^{-(\frac12-\frac1p)}.
\]
By Propositions~\ref{prop:k} and~\ref{prop:Z}, and Cauchy-Schwarz,
\begin{align*}
 \big|\supI|V_n'(t)\circ\pi_\Delta|\big|_{p-1} & =
 \big|\supI|V_n'(t)|\big|_{p-1}
  \le n^{-\frac12}\big|Z_n\big( n^{-\frac12}\supI|[nt]-k|+3\big)\big|_{p-1}
\\ & \le n^{-\frac12}|Z_n|_{2(p-1)}\,\big( n^{-\frac12}\big|\supI|[nt]-k|\big|_{2(p-1)}+3\big)
\\ & \ll n^{-\frac12}|Z_n|_{2(p-1)} \ll n^{-(\frac14-\frac{1}{4(p-1)})}
 = n^{-\frac14\frac{p-2}{p-1}}.
\end{align*}
Combining these estimates, we obtain $\big|\supI|g\circ W_n\circ\pi_\Delta-\sigma X_n|\big|_{p-1}
\ll n^{-\frac14\frac{p-2}{p-1}}$.
Now apply Proposition~\ref{prop:pi}(b).
\end{proof}

\begin{pfof}{Theorem~\ref{thm:WIP}}
It is easy to see that $g(W)=_d W=\sigma B$.
Since $\pi_\Delta$ is a semiconjugacy, $W_n\circ\pi_\Delta=_d W_n$.
Hence combining Lemma~\ref{lem:Xn} and Lemma~\ref{lem:Mn},
\begin{align*}
\pi_1(g\circ W_n,g\circ W) & =\pi_1(g\circ W_n\circ\pi_\Delta,\sigma B)
\\ & \le
\pi_1(g\circ W_n\circ\pi_\Delta,\sigma X_n)+\pi_1(\sigma X_n,\sigma B)
\ll n^{-r(p)}.
\end{align*}

Now $g\circ g={\rm Id}$.  Also, 
$g:C[0,1]\to C[0,1]$ is Lipschitz with $\Lip\, g\le 2$. That is $\supI|g(u)-g(v)|\le 2\,\supI|u-v|$ for all $u,v\in C[0,1]$.
Hence it follows from the Lipschitz mapping theorem~\cite[Theorem~3.2]{Whitt74} that
$\pi_1(W_n,W)=\pi_1(g(g\circ W_n),g(g\circ W))
\le 2\pi_1(g\circ W_n,g\circ W)\ll n^{-r(p)}$.
\end{pfof}

\section{Convergence rates for homogenization}
\label{sec:fastslow}

In this section, we prove Theorem~\ref{thm:NUEfs}.
We begin by proving an abstract result, Theorem~\ref{thm:fastslow} below, before specializing to the case where $T$ is nonuniformly expanding.

Let $T:\Lambda\to\Lambda$ be a map with ergodic invariant probability measure $\mu$.
Define $y(n)=T^ny_0$ where $y_0$ is chosen randomly from $(\Lambda,\mu)$, and fix $\xi\in\R$.
We consider fast-slow systems of the form~\eqref{eq:fastslow}
where $a_\eps:\R\times \Lambda\to\R$ and $b:\R\to\R$ satisfy 
the regularity assumptions listed in Section~\ref{subsec:rates} and $v\in L^\infty(\Lambda)$.

Define
\[
\SMALL \bar a(x)=\int_\Lambda a_0(x,y)\,d\mu(y),
\qquad \tilde a(x,y)=a_0(x,y)-\bar a(x).
\]
Let $\tilde a_u(y)=\tilde a(u,y)$.  We suppose that there exist $C>0$, $q\ge1$ such that
\begin{align} \label{eq:au}
 \SMALL |\sum_{j=0}^{n-1}\tilde a_u\circ T^j|_q & \le Cn^{\frac12}
\quad\text{for all $n\ge1$, $u\in\R$.}
\\
 \SMALL |\sum_{j=0}^{n-1}v\circ T^j|_q & \le Cn^{\frac12}
\quad\text{for all $n\ge1$.}
\label{eq:vn}
\end{align}
When $b\not\equiv1$, we require in addition that
\begin{align} \label{eq:v2}
\SMALL |\sum_{j=0}^{n-1}v^2\circ T^j-n\int_\Lambda v^2\,d\mu|_q\le Cn^{\frac12}
\quad\text{for all $n\ge1$.}
\end{align}

Define 
$W_\eps(t)=\eps v_{t\eps^{-2}}$ for $t=0,\eps^2,2\eps^2,\ldots$
and linearly interpolate (and restrict to $[0,1]$) to obtain $W_\eps\in C[0,1]$.
We assume that
$W_\eps\to_w W$ in $C[0,1]$ where $W$ is a one-dimensional Brownian motion with variance~$\sigma^2$.
Let $\hat x_\eps(t)=x_\eps(t\eps^{-2})$ for $t=0,\eps^2,2\eps^2,\ldots$ and  linearly interpolate to obtain $\hat x_\eps\in C[0,1]$.
By~\cite[Theorem~1,3]{GM13b}, $\hat x_\eps\to_w X$ in $C[0,1]$ where 
$X$ is the solution of the Stratonovich  SDE~\eqref{eq:XSDE}.

\begin{thm}  \label{thm:fastslow}
Suppose that $\pi_1(W_\eps,W)=O(\eps^r)$ for some $r>0$.
Then $\pi_1(\hat x_\eps,X)\le 
C( \eps^r+\eps^{\frac13\frac{q}{q+1}} (-\log\eps)^{\frac{q}{4}})$.
\end{thm}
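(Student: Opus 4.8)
The plan is to realise both the limit $X$ and the rescaled slow process $\hat x_\eps$ as the image, under one fixed \emph{globally Lipschitz} map $G\colon C[0,1]\to C[0,1]$, of the driving processes $W$ and $W_\eps$ (the latter only up to a small error), and then transfer the given rate via the Lipschitz mapping theorem~\cite{Whitt74}. Concretely I would write
\[
\pi_1(\hat x_\eps,X)\le \pi_1(\hat x_\eps,G(W_\eps))+\pi_1(G(W_\eps),G(W)),
\]
bound the second term by $\Lip(G)\,\pi_1(W_\eps,W)=O(\eps^r)$, and reduce the theorem to showing $\pi_1(\hat x_\eps,G(W_\eps))\le C\eps^{\frac13\frac{q}{q+1}}(-\log\eps)^{q/4}$.

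The map $G$ is the Doss--Sussmann representation of the Stratonovich SDE~\eqref{eq:XSDE} (equivalently, a Lamperti-type change of variables). Let $\Phi(w,y)$ solve $\partial_w\Phi=b(\Phi)$, $\Phi(0,y)=y$, and let $Y$ solve $\dot Y=\tfrac{b(Y)}{b(\Phi(W,Y))}\,c(\Phi(W,Y))$, $Y(0)=\xi$, with $c=\bar a-\tfrac12 bb'\int_\Lambda v^2\,d\mu$; then $G(W):=\Phi(W,Y)$ and, by Doss--Sussmann/Wong--Zakai, $X=G(W)$ almost surely. The key point is that $G$ is globally Lipschitz: since $b,1/b\in L^\infty$ one has the identity $\partial_y\Phi(w,y)=b(\Phi(w,y))/b(y)$, which is bounded above and below, so $\Phi$ is globally Lipschitz and bi-Lipschitz in $y$; and a short computation (using $b,b',b''\in L^\infty$ and the Lipschitz regularity of $\bar a$ from regularity condition~(iii)) shows the right-hand side of the $Y$-equation is globally Lipschitz in both arguments. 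A Gr\"onwall estimate then gives that $W\mapsto Y$, and hence $W\mapsto X=\Phi(W,Y)$, is globally Lipschitz $C[0,1]\to C[0,1]$.

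For the bulk term I would run the same transformation at the discrete level: Taylor-expanding $\Phi$, one checks that $x_\eps(n)=\Phi\big(W_\eps(n\eps^2),\hat Y_\eps(n)\big)$ up to errors, where $\hat Y_\eps$ is an Euler-type scheme for the $Y$-equation driven by $W_\eps$. Matching the $\eps\,v(y(n))$ terms is automatic (because $\partial_w\Phi=b(\Phi)$), and matching at order $\eps^2$ forces the increments of $\hat Y_\eps$ to involve $a_\eps$ and $v^2$. Propagating the resulting $\sup$-norm error through a Gr\"onwall estimate with the (bounded) linearised dynamics, the error has four sources: (i) the quadratic/cubic Taylor remainders, which are $O(\eps)$ deterministically since $v\in L^\infty$; (ii) replacing $a_\eps$ by $a_0$, which is $O(\eps^{1/3})$ deterministically by regularity condition~(ii) --- the dominant contribution; (iii) the noise-type drift term $\eps^2\sum_{j<k}\phi_j\,\tilde a(\Phi_j,y(j))$ with slowly varying weight $\phi_j=b(\hat Y_\eps(j))/b(\Phi_j)$, bounded by summation by parts together with a discretisation of the range of $\Phi_j$ (bounded with overwhelming probability, controlled via~\eqref{eq:au}--\eqref{eq:vn}) to reduce to the fixed-$u$ bound~\eqref{eq:au}, and a maximal inequality converting $|\cdot|_q\le Cn^{1/2}$ into a bound on $\max_{k\le n}$; and (iv) the variance-type term $\eps^2\sum_{j<k}\psi_j\big(v^2(y(j))-\int_\Lambda v^2\,d\mu\big)$, handled the same way via~\eqref{eq:v2}. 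Terms (iii)--(iv) are $o(\eps^{1/3})$ up to logarithmic factors, and the Euler-to-ODE comparison $\hat Y_\eps\to Y(W_\eps)$ is of the same order, so altogether $\big|\,\supI|\hat x_\eps-G(W_\eps)|\,\big|_q\le C\eps^{1/3}(-\log\eps)^{(q+1)/4}$; Proposition~\ref{prop:pi}(b) with exponent $q$ then yields $\pi_1(\hat x_\eps,G(W_\eps))\le C\eps^{\frac13\frac{q}{q+1}}(-\log\eps)^{q/4}$.

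I expect the main obstacle to be steps~(iii)--(iv): controlling sums $\sum_{j<k}(\text{slowly varying weight}_j)\cdot(\text{mean-zero observable along the orbit}_j)$ where the observable index $\Phi_j$ itself depends on $j$. A naive summation by parts is too lossy, because the weights have total variation of order $\eps^{-1}$; one must exploit that the weight increments are themselves of the form $\eps\cdot(\text{slowly varying})\cdot v(y(j))+O(\eps^2)$ --- i.e. iterate the summation by parts, or otherwise use the near-orthogonality built into the hypotheses --- in order to beat the worst-case bound. The logarithmic factors in the final estimate are precisely the price of the maximal inequalities needed to pass from the fixed-time $L^q$ bounds~\eqref{eq:au},~\eqref{eq:vn},~\eqref{eq:v2} to bounds on $\max_{k\le n}$ of the partial sums, uniformly over the discretisation of the slow variable.
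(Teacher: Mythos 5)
Your overall strategy matches the paper's: realise $X$ (and, up to a small error, $\hat x_\eps$) as the image of the driving process under a fixed globally Lipschitz map, transfer the given rate via Whitt's Lipschitz mapping theorem, and then work to control the error term. There are two substantive divergences, and it is the second one where the proposal has a real gap.

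First, a cosmetic difference. You use the Doss--Sussmann map $G(W)=\Phi(W,Y)$ directly with general $b$; the paper instead performs a Lamperti change of variables $z=\psi(x)$ (with $\psi'=1/b$) \emph{once at the start} to reduce to the case $b\equiv 1$, and then uses the much simpler ODE functional $\cG(u)(t)=$ solution of $v=\xi+\int_0^t\bar a(v)\,ds+u$. The two are morally equivalent (indeed $\Phi(w,y)=\psi^{-1}(\psi(y)+w)$), but the Lamperti reduction is cleaner: after it, the ``slowly varying weight'' $\phi_j=b(\hat Y_\eps(j))/b(\Phi_j)$ you would otherwise carry around is identically $1$, and the variance-type term (your item (iv)) is absorbed into the new drift coefficient $A_\eps$. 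So the paper only has to control one family of drift sums, namely $\sum_{j<k}\tilde a\big(x_\eps(j),y(j)\big)$, under hypothesis~\eqref{eq:au}.

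Second, and this is the genuine gap: your handling of items (iii)--(iv) is not going to close, and you are honest about this (``the main obstacle''). The paper does \emph{not} use summation by parts at all, and does not need to iterate it or invoke any near-orthogonality; the hypotheses~\eqref{eq:au}--\eqref{eq:v2} are fixed-$u$ moment bounds of order $n^{1/2}$ and encode no orthogonality that would make such an argument viable. What the paper does instead --- and what your sketch is missing --- is a \emph{temporal block decomposition}: the time range $[0,\eps^{-2}]$ is split into blocks of length $M=[\eps^{-4/3}]$, and within each block the slow variable is \emph{frozen} at its value $x_\eps(nM)$. The error from freezing over one block is $O(\eps M^{1/2})=O(\eps^{1/3})$ by~\eqref{eq:vn}, while the $L^q$ norm of each (frozen) block sum is $O(\eps^{4/3}M^{1/2})=O(\eps^{2/3})$ by~\eqref{eq:au}. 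Summing by the triangle inequality over the $\eps^{-2/3}$ blocks gives an $\eps^0$ loss, which is then beaten by the spatial discretisation: on the high-probability event $B_\eps=\{\supI|\hat x_\eps|\le Q\}$ with $Q\sim(-\log\eps)^{1/2}$, one replaces the frozen value $x_\eps(nM)$ by a nearest point in a finite net $S\subset[-Q,Q]$ of cardinality $|S|=[\eps^{-1/3}(-\log\eps)^{1/4}]$, incurring an error $\Lip(a_0)\,Q/|S|$ per block and a factor $|S|$ on the moment bound. The choice $|S|=[\eps^{-1/3}(-\log\eps)^{1/4}]$ balances $|S|\,\eps^{2/3}$ against $Q/|S|$, and the choice $M=[\eps^{-4/3}]$ balances the freezing error against the block contribution; both are forced and both produce $\eps^{1/3}$ up to the stated logarithms. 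Your discretisation of the range is the right idea, but without the preceding temporal freezing the weights still vary from step to step and the fixed-$u$ hypothesis~\eqref{eq:au} does not apply to the block sums.

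Finally, your attribution of the logarithmic factors to ``maximal inequalities needed to pass from fixed-time $L^q$ bounds to $\max_{k\le n}$'' is not where they come from. No such maximal inequality is used: the paper passes from blocks to the supremum by a plain triangle inequality, already subsumed in the block bookkeeping. The logarithms come entirely from the high-probability truncation radius $Q=(-32\sigma^2\log\eps)^{1/2}$ (Lemma~\ref{lem:Q}, via the reflection principle and the hypothesis $\pi_1(W_\eps,W)=O(\eps^r)$) and the ensuing optimisation over $|S|$ in Lemma~\ref{lem:Deps}. Relatedly, the $L^q$ bound on the error term must be taken on the event $B_\eps$ only; the spatial net argument breaks down off $B_\eps$, and the contribution of $B_\eps^c$ is absorbed into the $\eps^r$ term via Proposition~\ref{prop:pi}(a), not via Proposition~\ref{prop:pi}(b) as you propose.
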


\begin{rmk} It is easily seen from the proof that in the special case $a_\eps(x,y)\equiv \bar a(x)$, $b\equiv1$, we obtain the same rate
$\pi_1(\hat x_\eps,X)=O(\eps^r)$ as in the WIP.
\end{rmk}

\begin{pfof}{Theorem~\ref{thm:NUEfs}}
In the definitions of $W_n$ and $W_\eps$, notice that $\eps$ is identified with $n^{-\frac12}$.  Hence it follows from Theorem~\ref{thm:WIP} that
$\pi_1(W_\eps,W)=O(\eps^{\frac{p-2}{2p}})$.

Next, we verify the moment conditions~\eqref{eq:au}--\eqref{eq:v2} with $q=2(p-1)$.  Since $v$ and $v^2-\int_\Lambda v^2\,d\mu$ lie in $C^\eta_0(\Lambda)$, 
conditions~\eqref{eq:vn} and~\eqref{eq:v2} follow from Proposition~\ref{prop:moment}.  The assumption on $a_0$ implies that $\tilde a_u\in C^\eta_0(\Lambda)$ for all $u\in\R$ and that $\sup_u \|\tilde a_u\|_\eta<\infty$, so~\eqref{eq:au} also follows from Proposition~\ref{prop:moment}.

We have now verified all of the hypotheses of Theorem~\ref{thm:fastslow} and it follows that
$\BIG\pi_1(\hat x_\eps,X)\le C \big\{\eps^{\frac{p-2}{2p}}
+\eps^{\frac13\frac{2p-2}{2p-1}}(-\log\eps)^{\frac12(p-1)}\big\}$.
\end{pfof}

In Subsection~\ref{sec:b=1}, we prove Theorem~\ref{thm:fastslow} when $b\equiv1$.  The general case is proved in Subsection~\ref{sec:gen} by reducing to the case $b\equiv1$.

\subsection{The case $b\equiv1$}
\label{sec:b=1}

In this subsection, we prove Theorem~\ref{thm:fastslow} in the special case where $b\equiv1$.
In this case, the limiting SDE takes the form
\begin{align} \label{eq:SDEb=1}
dX=\bar a(X)\,dt+dW. \qquad X(0)=\xi.
\end{align}

\begin{thm} \label{thm:b=1}
Suppose that $\pi_1(W_\eps,W)=O(\eps^r)$ for some $r>0$.
Then $\pi_1(\hat x_\eps,X)\le 
C( \eps^r+\eps^{\frac13\frac{q}{q+1}} (-\log\eps)^{\frac{q}{4}})$.
\end{thm}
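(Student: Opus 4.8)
The plan is to compare $\hat x_\eps$ with the solution $X$ of~\eqref{eq:SDEb=1} by exploiting that both can be expressed as images, under a fixed Lipschitz solution map (the It\^o/Lyons map for an ODE driven by a continuous path), of processes that are close in the Prokhorov metric. Concretely, iterating~\eqref{eq:fastslow} with $b\equiv1$ gives
\[
x_\eps(n)=\xi+\eps^2\sum_{k=0}^{n-1}a_\eps(x_\eps(k),y(k))+\eps\sum_{k=0}^{n-1}v(y(k)),
\]
so after the time rescaling $t\mapsto t\eps^{-2}$, the interpolated path $\hat x_\eps$ solves an integral equation of the form $\hat x_\eps(t)=\xi+\int_0^t \bar a(\hat x_\eps(s))\,ds + W_\eps(t)+(\text{error terms})$, while $X(t)=\xi+\int_0^t\bar a(X(s))\,ds+W(t)$. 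First I would make this rigorous: split $a_\eps=\bar a+\tilde a+(a_\eps-a_0)$, absorb $\eps^2\sum\bar a(x_\eps(k))$ into the drift integral (controlling the Riemann-sum discretization error by $O(\eps^2)\cdot O(\eps^{-2})=O(\eps)$-type bounds using $|a_\eps|_\infty<\infty$ and Lipschitzness of $\bar a$), bound the $(a_\eps-a_0)$ contribution by $C\eps^{1/3}$ using regularity condition (ii), and — this is the crux — control the fluctuating term $\eps^2\sum_{k=0}^{t\eps^{-2}-1}\tilde a(x_\eps(k),y(k))$.

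The main obstacle is exactly that last term: $\tilde a(x_\eps(k),y(k))$ is a Birkhoff-type sum but with the \emph{slow variable $x_\eps(k)$ itself} inside the first argument, so it is not a genuine Birkhoff sum of a fixed observable. The standard device (as in~\cite{GM13b,KKM18}) is a "freezing" argument: partition $[0,1]$ into blocks of length $\sim\eps^{2/3}$ (so $\sim\eps^{-4/3}$ steps each), on each block replace $x_\eps(k)$ by its value $u$ at the left endpoint, and estimate $\eps^2|\sum_{k\in\text{block}}\tilde a_u(y(k))|$ using hypothesis~\eqref{eq:au}: this is $O(\eps^2)\cdot O((\eps^{-4/3})^{1/2})=O(\eps^{2/3})$ in $L^q$ per block. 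The error from freezing is controlled by the modulus of continuity of $x_\eps$ over a block (of size $\eps^{2/3}$) times $\Lip_x a_0$ and the block length, again $O(\eps^{2/3})$-ish; summing over the $\sim\eps^{-2/3}$ blocks and taking a maximum over blocks costs a factor $(\#\text{blocks})^{1/q}=O(\eps^{-2/(3q)})$, or a $(-\log\eps)$-type factor if one instead uses a union bound with Markov. Assembling, one gets $\big|\sup_{[0,1]}|\eps^2\sum_{k<t\eps^{-2}}\tilde a(x_\eps(k),y(k))|\big|_q\ll \eps^{2/3}(-\log\eps)^{q/4}$-scale bound, and then Proposition~\ref{prop:pi}(b) converts this $L^q$ bound into a Prokhorov bound $O(\eps^{\frac13\frac{q}{q+1}}(-\log\eps)^{q/4})$.

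Having shown $\hat x_\eps = \Phi(W_\eps) + (\text{remainder of Prokhorov-size } O(\eps^{\frac13\frac{q}{q+1}}(-\log\eps)^{q/4}))$ and $X=\Phi(W)$, where $\Phi$ is the map sending a path $w\in C[0,1]$ to the solution of $z(t)=\xi+\int_0^t\bar a(z(s))\,ds+w(t)$, I would finish by invoking that $\Phi$ is Lipschitz on $C[0,1]$ (Gr\"onwall: $\sup_{[0,1]}|\Phi(w)-\Phi(w')|\le e^{\Lip\bar a}\sup_{[0,1]}|w-w'|$) together with the Lipschitz mapping theorem~\cite[Theorem~3.2]{Whitt74}, so that $\pi_1(\Phi(W_\eps),\Phi(W))\le C\,\pi_1(W_\eps,W)=O(\eps^r)$. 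A triangle-inequality step (as in the proof of Theorem~\ref{thm:WIP}, combining the Prokhorov bound on $\hat x_\eps$ vs.\ $\Phi(W_\eps)$ with the one on $\Phi(W_\eps)$ vs.\ $X$) then yields $\pi_1(\hat x_\eps,X)\le C(\eps^r+\eps^{\frac13\frac{q}{q+1}}(-\log\eps)^{q/4})$, as claimed. The only points requiring care beyond bookkeeping are: justifying that the discretization/interpolation errors really are of lower order than $\eps^r$ and $\eps^{\frac13\frac{q}{q+1}}$, choosing the block length $\eps^{2/3}$ so as to balance the two error sources in the freezing argument, and keeping track of the $(-\log\eps)$ powers produced when passing from moment bounds on a maximum over $\eps^{-2/3}$ blocks to a single tail bound.
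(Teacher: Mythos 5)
Your high-level plan is correct and matches the paper's: decompose $a_\eps=\bar a+\tilde a+(a_\eps-a_0)$, absorb $\bar a$ into a drift integral, bound the Riemann-sum and $(a_\eps-a_0)$ errors in $L^\infty$, freeze $x_\eps$ on blocks of $M\sim\eps^{-4/3}$ steps, and finish by writing $\hat x_\eps=\cG(W_\eps+D_\eps+E_\eps)$ and $X=\cG(W)$ for the Gr\"onwall-Lipschitz solution map $\cG$, then apply the Lipschitz mapping theorem of Whitt and the triangle inequality. This is precisely the skeleton of the paper's Proposition~\ref{prop:prelim} and the proof of Theorem~\ref{thm:b=1}.

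However, there is a genuine gap in the crux of your argument, namely in how you control the frozen block sums. You propose to ``replace $x_\eps(k)$ by its value $u$ at the left endpoint, and estimate $\eps^2|\sum_{k\in\text{block}}\tilde a_u(y(k))|$ using hypothesis~\eqref{eq:au}.'' But~\eqref{eq:au} is a moment bound for the fixed-observable Birkhoff sum $\sum_j\tilde a_u\circ T^j$ with \emph{deterministic} $u$; here $u=x_\eps(nM)$ is a random variable, measurable with respect to the orbit up to time $nM$ and hence correlated with the subsequent $y(j)$. One cannot slide a random, dynamics-dependent $u$ inside the $L^q$ bound of~\eqref{eq:au}. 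The paper's resolution is a localization-plus-net argument that your sketch does not contain: first (Lemma~\ref{lem:Q}) use the assumed rate $\pi_1(W_\eps,W)=O(\eps^r)$ together with the reflection principle for Brownian motion to show that $\supI|\hat x_\eps|\le Q$ with probability $1-O(\eps^r)$, where $Q\sim(-\log\eps)^{1/2}$; then (Lemma~\ref{lem:Deps}) on this event replace $x_\eps(nM)$ by the nearest point of a deterministic finite net $S\subset[-Q,Q]$, bound $|1_{B_\eps}J_\eps(n)|\le\sum_{u\in S}|\tilde J_\eps(n,u)|+4\,\Lip\,a_0\,Q/|S|$, and apply~\eqref{eq:au} to each fixed $u\in S$. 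Optimizing $|S|=[\eps^{-1/3}(-\log\eps)^{1/4}]$ balances the union-bound cost $|S|\eps^{2/3}$ against the net-width error $Q/|S|$ and yields $\big|1_{B_\eps}\supI|D_\eps|\big|_q\ll\eps^{1/3}(-\log\eps)^{1/4}$.

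Relatedly, your attribution of the logarithmic factors is off. You suggest they arise from a ``maximum over $\sim\eps^{-2/3}$ blocks'' via a union bound. In fact the paper never takes a max over blocks: $\supI|D_\eps|$ is controlled by the cumulative sum $\eps^{2/3}\sum_n|J_\eps(n)|$, and the $L^q$ norm of that sum is just $\eps^{2/3}\cdot\eps^{-2/3}$ times the per-block bound, with no extra combinatorial loss. The $(-\log\eps)$ factors come entirely from the localization radius $Q\sim(-\log\eps)^{1/2}$ and the corresponding net cardinality, which then propagate through Markov's inequality (Proposition~\ref{prop:pi}(b)) to give the stated $\eps^{\frac13\frac{q}{q+1}}(-\log\eps)^{q/4}$. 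Without the localization step you have no finite set over which to run a union bound at all, since $x_\eps$ is a priori unbounded, and without the net you are illegitimately applying a deterministic-$u$ moment bound to a random $u$.
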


We have the following preliminary calculation:
Set $M=[\eps^{-\frac43}]$.

\begin{prop} \label{prop:prelim}
Suppose that $b(x)\equiv1$.  Then
\[
\hat x_\eps(t)=\xi+ 
\int_0^t\bar a(\hat x_\eps(s))\,ds +
W_\eps(t) +D_\eps(t)+ E_\eps(t), 
\]
where 
\[
D_\eps(t)=\eps^\frac23
\!
\sum_{n=0}^{[t\eps^{-\frac23}]-1}
\!
J_\eps(n), \qquad
J_\eps(n)=\eps^{\frac43}
\!\!
\sum_{j=nM}^{(n+1)M-1}
\!\!
\tilde a(x_\eps(nM),y(j)),
\]
and
$\big|\supI|E_\eps|\big|_q \le C\eps^{\frac13}$.
\end{prop}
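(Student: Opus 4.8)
The plan is to obtain the formula by telescoping the recursion~\eqref{eq:fastslow} with $b\equiv1$, recognising the ``noise'' part as $W_\eps$ and the reorganised drift as $\int_0^t\bar a(\hat x_\eps(s))\,ds+D_\eps(t)$, and then simply \emph{defining} $E_\eps(t):=\hat x_\eps(t)-\xi-\int_0^t\bar a(\hat x_\eps(s))\,ds-W_\eps(t)-D_\eps(t)$ and estimating it. Fix $t\in[0,1]$, put $N=[t\eps^{-2}]$, $\theta=t\eps^{-2}-N\in[0,1)$ and $n_t=[t\eps^{-2/3}]$, so that $D_\eps(t)=\eps^2\sum_{n=0}^{n_t-1}\sum_{j=nM}^{(n+1)M-1}\tilde a(x_\eps(nM),y(j))$.

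First I would telescope: $x_\eps(N)=\xi+\eps^2\sum_{k=0}^{N-1}a_\eps(x_\eps(k),y(k))+\eps v_N$, and since $\hat x_\eps(t)=(1-\theta)x_\eps(N)+\theta x_\eps(N+1)$,
\[
\hat x_\eps(t)=\xi+\eps^2\!\sum_{k=0}^{N-1}a_\eps(x_\eps(k),y(k))+\bigl(\eps v_N+\theta\eps\,v(y(N))\bigr)+\theta\eps^2 a_\eps(x_\eps(N),y(N)).
\]
The key observation is that $\eps v_N+\theta\eps\,v(y(N))$ is exactly the interpolant $W_\eps(t)$, while the last term is $\le\eps^2|a_\eps|_\infty$ uniformly in $t$ and hence absorbed into $E_\eps$. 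It then remains to process the drift sum in three moves. (a) Replace $a_\eps$ by $a_0$: by regularity condition~(ii) and $N\le\eps^{-2}$ the error is $\le C\eps^2N\eps^{1/3}\le C\eps^{1/3}$, deterministic, uniform in $t$. (b) Split $a_0=\bar a+\tilde a$; for the mean part compare the Riemann sum $\eps^2\sum_{k=0}^{N-1}\bar a(x_\eps(k))$ with $\int_0^t\bar a(\hat x_\eps(s))\,ds$: on $[k\eps^2,(k+1)\eps^2]$ one has $|\hat x_\eps(s)-x_\eps(k)|\le|x_\eps(k+1)-x_\eps(k)|\le\eps^2|a_\eps|_\infty+\eps|v|_\infty=O(\eps)$, so by Lipschitzness of $\bar a$ (inherited from~(iii)) each interval contributes $O(\eps^3)$; summing over $N\le\eps^{-2}$ intervals and adding the tail $\int_{N\eps^2}^t\bar a(\hat x_\eps)\,ds=O(\eps^2)$ gives a total $O(\eps)$, again deterministic and uniform in $t$.

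(c) The remaining fluctuation sum $\eps^2\sum_{k=0}^{N-1}\tilde a(x_\eps(k),y(k))$ must be matched with $D_\eps(t)$; this is the crux. I would split the discrepancy into a ``boundary'' part $\eps^2\sum_{j=n_tM}^{N-1}\tilde a(x_\eps(j),y(j))$ (a partial last block) and a ``freezing'' part $\eps^2\sum_{n=0}^{n_t-1}\sum_{j=nM}^{(n+1)M-1}\bigl[\tilde a(x_\eps(j),y(j))-\tilde a(x_\eps(nM),y(j))\bigr]$. For the boundary part, $0\le N-n_tM\le t\eps^{-2/3}+\eps^{-4/3}=O(\eps^{-4/3})$, so since $|\tilde a|_\infty\le 2|a_0|_\infty<\infty$ this part is $O(\eps^2\cdot\eps^{-4/3})=O(\eps^{2/3})$, deterministic and uniform in $t$. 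For the freezing part, $\tilde a$ is Lipschitz in $x$ with constant $\le 2\Lip\,a_0$, so the block-$n$ term is $\le 2\Lip\,a_0\sum_{j=nM}^{(n+1)M-1}|x_\eps(j)-x_\eps(nM)|$, and from the recursion $|x_\eps(j)-x_\eps(nM)|\le\eps^2M|a_\eps|_\infty+\eps|v_{j-nM}\circ T^{nM}|$. Summing over the block and then over the (at most $[\eps^{-2/3}]$) blocks and taking $L^q$-norms via $|v_i\circ T^{nM}|_q\le Ci^{1/2}$ (from~\eqref{eq:vn} and $T$-invariance of $\mu$) and $\sum_{i<M}i^{1/2}\ll M^{3/2}$, I get two contributions of orders $\eps^4\,\eps^{-2/3}M^2=O(\eps^{2/3})$ and $\eps^3\,\eps^{-2/3}M^{3/2}=O(\eps^{1/3})$ --- which is exactly where the choice $M=[\eps^{-4/3}]$ is calibrated. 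Finally, the freezing part is constant in $t$ on each $[n\eps^{2/3},(n+1)\eps^{2/3})$, so its supremum over $[0,1]$ is dominated by $\eps^2$ times the sum over $n<[\eps^{-2/3}]$ of the absolute block terms; hence taking $\supI$ before the $L^q$-norm costs nothing.

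Collecting, $E_\eps$ is the sum of the interpolation remainder, the two drift remainders from~(a),(b), and the boundary and freezing parts from~(c). All but the freezing part are deterministic and $\le C\eps^{1/3}$ uniformly in $t$, and the freezing part is $O(\eps^{1/3})$ in $L^q$ after $\supI$, so $\bigl|\supI|E_\eps|\bigr|_q\le C\eps^{1/3}$. The one genuinely delicate step is~(c): estimating the cost of freezing the slow variable over blocks of length $M$ while keeping $\supI$ inside the norm, and checking that $M=[\eps^{-4/3}]$ balances the within-block drift error ($\sim\eps^4M^2$ per $\eps^{-2/3}$ blocks) against the within-block noise error ($\sim\eps^3M^{3/2}$ per $\eps^{-2/3}$ blocks) so that both sit at or below $\eps^{1/3}$; everything else is triangle-inequality bookkeeping on top of~\eqref{eq:vn}.
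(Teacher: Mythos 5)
Your proof is correct and follows essentially the same route as the paper: telescope the recursion, replace $a_\eps$ by $a_0$ at cost $O(\eps^{1/3})$, split $a_0=\bar a+\tilde a$, compare the Riemann sum for $\bar a$ with the integral, and freeze the slow variable over blocks of length $M=[\eps^{-4/3}]$, estimating the within-block error in $L^q$ via~\eqref{eq:vn} and the boundary error deterministically. Your block-freezing estimate is slightly more refined than the paper's (you use $\sum_{i<M}i^{1/2}\ll M^{3/2}$ where the paper bounds each within-block increment by the worst case $O(\eps M^{1/2})$), and your treatment of the Riemann-sum error for $\bar a$ is actually more careful than the paper's: the paper asserts exact equality when $t\eps^{-2}$ is an integer, which isn't literally true for the piecewise-\emph{linear} interpolant $\hat x_\eps$, whereas your argument via Lipschitzness of $\bar a$ gives the correct $O(\eps)$ bound. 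Both refinements are harmless (all terms land at or below $\eps^{1/3}$) and the overall strategy is identical.
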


\begin{proof}
Introduce the step function $\tilde x_\eps(t)=x_\eps([t\eps^{-2}])$.
Then 
\[
\tilde x_\eps(t)=\xi+ 
\eps^2\sum_{j=0}^{[t\eps^{-2}]-1}a_\eps(x_\eps(j),y(j))
+\eps\sum_{j=0}^{[t\eps^{-2}]-1}v(y(j)).
\]
Using the estimates
\begin{align*}
& |\hat x_\eps(t)-\tilde x_\eps(t)|_\infty\le \eps^2|a_\eps|_\infty+\eps|v|_\infty, \qquad
\SMALL  |W_\eps(t)-\eps\sum_{j=0}^{[t\eps^{-2}]-1}v(y(j))|_\infty\le \eps|v|_\infty, \\
& \SMALL |\eps^2\sum_{j=0}^{[t\eps^{-2}]-1}\{a_\eps(x_\eps(j),y(j))-
a_0(x_\eps(j),y(j))\}|_\infty=O(\eps^\frac13),
\end{align*}
we obtain
\begin{align} \label{eq:prelim}
\hat x_\eps(t) 
& =\xi+  \eps^2\sum_{j=0}^{[t\eps^{-2}]-1}a_0(x_\eps(j),y(j))+W_\eps(t)+O(\eps^\frac13)
\\
& =\xi+  
\eps^2\sum_{j=0}^{[t\eps^{-2}]-1}\bar a(\hat x_\eps(\eps^2 j))+W_\eps(t)+
F_\eps(t)+O(\eps^\frac13),
\nonumber
\end{align}
where 
$F_\eps(t)=\eps^2\sum_{j=0}^{[t\eps^{-2}]-1}\tilde a(x_\eps(j),y(j))$.

If $t\eps^{-2}$ is an integer, then 
$\eps^2\sum_{j=0}^{[t\eps^{-2}]-1}\bar a(\hat x_\eps(\eps^2 j))=\int_0^t \bar a(\hat x_\eps(s))\,ds$, while
in general $\big|\eps^2\sum_{j=0}^{[t\eps^{-2}]-1}\bar a(\hat x_\eps(\eps^2 j))-\int_0^t \bar a(\hat x_\eps(s))\,ds\big|_\infty\le \eps^2|a_0|_\infty$.
Hence
\[
\hat x_\eps(t)=\xi+ 
\int_0^t\bar a(\hat x_\eps(s))\,ds +
W_\eps(t) +F_\eps(t)+ O(\eps^\frac13).
\]

Next,
for $nM\le j<(n+1)M$, $y\in\Lambda$,
\begin{align*}
|\tilde a(x_\eps(j),y)-\tilde a(x_\eps(nM,y)|
& \le 2\,\Lip\, a_0\,|x_\eps(j)-x_\eps(nM)|
\\ & \le 2\,\Lip\, a_0\,(\eps^2 M|a_\eps|_\infty+\eps|\SMALL\sum_{i=nM}^{j-1} v(T^iy_0)|).
\end{align*}
By~\eqref{eq:vn},
$|\sum_{i=nM}^{j-1} v\circ T^i|_q
=|\sum_{i=0}^{j-nM-1}v\circ T^i|_q\ll(j-nM)^\frac12\le M^\frac12$.
Hence 
$|\tilde a(x_\eps(j),y(j))-\tilde a(x_\eps(nM,y(j))|_q=O(\eps M^\frac12)=O(\eps^{\frac13})$ uniformly in
$nM\le j<(n+1)M$.
It follows that
\[
\eps^2\Big|\sup_{t\in[0,1]}\Big|\sum_{j=0}^{[t\eps^{-\frac23}]M-1}\tilde a(x_\eps(j),y(j))
-\sum_{n=0}^{[t\eps^{-\frac23}]-1}\sum_{j=nM}^{(n+1)M-1}
\tilde a(x_\eps(nM),y(j))\Big|\,\Big|_q=O(\eps^{\frac13}).
\]
Also,
\[
\Big|F_\eps(t)-\eps^2\sum_{j=0}^{[t\eps^{-\frac23}][\eps^{-\frac43}]-1}\tilde a(x_\eps(j),y(j))\Big|_\infty\le \eps^2(\eps^{-\frac43}+\eps^{-\frac23})|\tilde a|_\infty
\le 4\eps^{\frac23}|a_0|_\infty.
\]
Combining these last two estimates, we obtain
$\big|\supI|F_\eps-D_\eps|\big|_q=O(\eps^\frac13)$ and the result follows.
\end{proof}

\begin{lemma} \label{lem:Q}
Let $Q=(-32\sigma^2\log\eps)^{\frac12}$.  Then
$\mu\big(\supI|\hat x_\eps|\ge Q\big) \le C \eps^r$.
\end{lemma}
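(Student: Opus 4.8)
The plan is to convert the decomposition of Proposition~\ref{prop:prelim} into a deterministic pointwise bound for $\supI|\hat x_\eps|$ in terms of $\supI|W_\eps|$ and $\supI|E_\eps|$, then to control the tail of $\supI|W_\eps|$ by importing the Gaussian tail of Brownian motion through the hypothesis $\pi_1(W_\eps,W)=O(\eps^r)$, and the tail of $\supI|E_\eps|$ by Markov's inequality together with the bound $\big|\supI|E_\eps|\big|_q\le C\eps^{1/3}$ already supplied by Proposition~\ref{prop:prelim}.

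First I would exploit the fact that the regularity conditions force the drift to be bounded: $|\bar a|_\infty\le|a_0|_\infty$ and $|\tilde a|_\infty\le 2|a_0|_\infty<\infty$, so $\big|\int_0^t\bar a(\hat x_\eps(s))\,ds\big|\le|a_0|_\infty$ for $t\in[0,1]$. Moreover each $J_\eps(n)$ is a sum of $M\le\eps^{-4/3}$ terms of modulus $\le\eps^{4/3}\cdot2|a_0|_\infty$, so $|J_\eps(n)|\le2|a_0|_\infty$, and $D_\eps$ is $\eps^{2/3}$ times a sum of at most $\eps^{-2/3}$ such terms, whence $\supI|D_\eps|\le2|a_0|_\infty$ (linear interpolation does not increase the supremum). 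Thus Proposition~\ref{prop:prelim} gives, pointwise on the probability space,
\[
\supI|\hat x_\eps|\le C_0+\supI|W_\eps|+\supI|E_\eps|,\qquad C_0:=|\xi|+3|a_0|_\infty .
\]
Since $Q=(-32\sigma^2\log\eps)^{1/2}\to\infty$ as $\eps\to0$, for $\eps$ small enough we have $C_0\le Q/4$ and $Q/4\ge1$; I restrict to such $\eps$, the claimed bound being trivial on the remaining bounded range of $\eps$ after enlarging $C$. Under $C_0\le Q/4$ we have $\{\supI|\hat x_\eps|\ge Q\}\subseteq\{\supI|W_\eps|\ge Q/2\}\cup\{\supI|E_\eps|\ge Q/4\}$, so it suffices to bound each of these two events by $C\eps^r$.

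The $E_\eps$ term is immediate: by Markov's inequality and Proposition~\ref{prop:prelim},
$\mu\big(\supI|E_\eps|\ge Q/4\big)\le\mu\big(\supI|E_\eps|\ge1\big)\le(C\eps^{1/3})^q=C^q\eps^{q/3}\le C\eps^r$,
using $q/3\ge r$ (which holds throughout, since $q\ge1$). For the $W_\eps$ term I would apply the Prokhorov estimate $\pi_1(W_\eps,W)\le c\eps^r$ to the closed set $A=\{u\in C[0,1]:\supI|u|\ge Q/2\}$: since every $u$ within sup-distance $c\eps^r$ of $A$ satisfies $\supI|u|\ge Q/2-c\eps^r\ge Q/4$ for $\eps$ small, this yields $\mu\big(\supI|W_\eps|\ge Q/2\big)\le\P\big(\supI|W|\ge Q/4\big)+c\eps^r$. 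Finally, the reflection principle and the Gaussian tail estimate give $\P\big(\supI|W|\ge\lambda\big)\le2\exp(-\lambda^2/(2\sigma^2))$ for Brownian motion $W$ of variance $\sigma^2$; with $\lambda=Q/4$ and $Q^2=-32\sigma^2\log\eps$ one has $\lambda^2/(2\sigma^2)=Q^2/(32\sigma^2)=-\log\eps$, so $\P\big(\supI|W|\ge Q/4\big)\le2\eps\le2\eps^r$. Combining the two tail bounds completes the proof.

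There is no serious obstacle here; the one substantive feature is that $\supI|W_\eps|$ carries no a priori exponential moments (we only know $W_\eps\to_w W$), which is precisely why the Prokhorov estimate is needed to transfer the Gaussian tail of $W$ to $W_\eps$ — and the numerical constant $32$ in the definition of $Q$ is calibrated so that this tail at level $\sim Q/4$ is $O(\eps)$, comfortably below $\eps^r$. Everything else is bookkeeping that relies crucially on the drift $\bar a$ being bounded rather than merely Lipschitz, which rules out any Gronwall-type growth of $\supI|\hat x_\eps|$ beyond bounded-plus-$W_\eps$-plus-small.
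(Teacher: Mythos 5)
Your argument follows the same core strategy as the paper: bound the deviation $\hat x_\eps - W_\eps$ pointwise using the boundedness of the drift, transfer the Gaussian tail of $\supI|W|$ to $\supI|W_\eps|$ via the Prokhorov hypothesis, and note that the factor $32$ in $Q$ is calibrated so this tail is $O(\eps)$. The one place you diverge is in which form of the decomposition you start from. You use the final statement of Proposition~\ref{prop:prelim}, which packages the remainder as $D_\eps + E_\eps$ with $E_\eps$ controlled only in $L^q$; this forces you to run Markov's inequality on $E_\eps$ and produces the extra error $\eps^{q/3}$, which you then need to absorb into $\eps^r$. Your justification that $q/3\ge r$ ``since $q\ge1$'' does not actually follow: $q\ge1$ gives $q/3\ge 1/3$, but the hypothesis only says $\pi_1(W_\eps,W)=O(\eps^r)$ for \emph{some} $r>0$, with no a priori upper bound on $r$. (In context it is harmless: the WIP rates used in the paper never exceed $1/4<1/3$, and $\eps^{q/3}\le\eps^{\frac13\frac{q}{q+1}}$ is dominated by the other error term in Theorem~\ref{thm:fastslow} anyway.) The paper avoids this bookkeeping entirely by appealing instead to the intermediate equation~\eqref{eq:prelim}, displayed inside the proof of Proposition~\ref{prop:prelim}, which states
\[
\hat x_\eps(t) = \xi + \eps^2\sum_{j=0}^{[t\eps^{-2}]-1}a_0(x_\eps(j),y(j)) + W_\eps(t) + O(\eps^{1/3}),
\]
where the $O(\eps^{1/3})$ is a deterministic $L^\infty$ bound. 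This immediately yields $|\hat x_\eps(t)-W_\eps(t)|\le|\xi|+|a_0|_\infty+O(\eps^{1/3})$ pointwise, so for $\eps$ small $\{\supI|\hat x_\eps|\ge Q\}\subseteq\{\supI|W_\eps|\ge Q/2\}$ and no Markov step, and hence no constraint relating $q$ and $r$, is needed.
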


\begin{proof}  
By the reflection principle,
\begin{align*}
\P\big(\supI|W|\ge \SMALL\frac14 Q\big) 
& \SMALL  \le 2\P\big(\supI W\ge  \frac14 Q\big) = 4\P(W(1)\ge \frac14 Q)
\\ & \SMALL  \ll \int_{Q/4}^\infty e^{-x^2/(2\sigma^2)}\,dx\ll Q^{-1}e^{-Q^2/(32\sigma^2)}\le\eps.
\end{align*}

By assumption, $\pi_1(W_\eps,W)=O(\eps^r)$.  In particular,
for $\eps$ sufficiently small,
$\mu\big(\supI|W_\eps|\ge \frac12 Q\big)\le \P(\supI|W|\ge \frac14 Q)+O(\eps^r)$.

By~\eqref{eq:prelim},
$|\hat x_\eps(t)-W_\eps(t)|\le |\xi|+|a_0|_\infty+O(\eps^\frac13)$.
Hence for~$\eps$ sufficiently small,
\begin{align*}
\mu\big(\supI|\hat x_\eps|\ge Q\big)
& \le \mu\big(\supI|W_\eps|\ge \SMALL \frac12 Q\big)
\le  \P\big(\supI|W|\ge \frac14 Q\big)+O(\eps^r)=O(\eps^r),
\end{align*}
as required.
\end{proof}

Let $B_\eps=\{\supI|\hat x_\eps|\le Q\}$
where $Q=Q_\eps$ is as in Lemma~\ref{lem:Q}.

\begin{lemma} \label{lem:Deps}
$\big|1_{B_\eps}\supI|D_\eps|\big|_q\le C\eps^\frac13(-\log\eps)^{\frac14}$.
\end{lemma}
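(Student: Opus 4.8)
The goal is to bound $\big|1_{B_\eps}\supI|D_\eps|\big|_q$. Recall $D_\eps(t)=\eps^{2/3}\sum_{n=0}^{[t\eps^{-2/3}]-1}J_\eps(n)$ with $J_\eps(n)=\eps^{4/3}\sum_{j=nM}^{(n+1)M-1}\tilde a(x_\eps(nM),y(j))$ and $M=[\eps^{-4/3}]$. The natural strategy is to view $D_\eps$ as a sum of roughly $\eps^{-2/3}$ increments $J_\eps(n)$ and estimate the maximum of the partial sums. Since the summands $\tilde a(x_\eps(nM),\cdot)$ have an $x$-argument that is measurable with respect to the past (it depends only on $y(0),\dots,y(nM-1)$), one would like to treat each $J_\eps(n)$ as a ``conditionally centered'' block. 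The key input is~\eqref{eq:au}: for each fixed $u\in\R$, $|\sum_{j=0}^{M-1}\tilde a_u\circ T^j|_q\le CM^{1/2}$, so a single block with a frozen $x$-value has $L^q$-size $O(\eps^{4/3}M^{1/2})=O(\eps^{2/3})$.

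First I would reduce to controlling $\max_{k\le[\eps^{-2/3}]}\big|\sum_{n=0}^{k-1}J_\eps(n)\big|$ on the event $B_\eps$. On $B_\eps$ we have $|x_\eps(nM)|\le Q$ for all relevant $n$, so the frozen base points all lie in the compact interval $[-Q,Q]$; this is exactly what lets us use~\eqref{eq:au} with a uniform constant, and $Q=O((-\log\eps)^{1/2})$ is the source of the logarithmic factor. The plan is to discretize $[-Q,Q]$ into $O(Q/\eps^{2/3})$ (or some polynomially many) points, use the Lipschitz property of $a_0$ in $x$ (regularity condition (iii)) together with the a priori bound $|x_\eps(j)-x_\eps(nM)|\le \eps^2 M|a_\eps|_\infty+\eps|\sum v\circ T^i|$ (as already derived in the proof of Proposition~\ref{prop:prelim}) to replace $x_\eps(nM)$ by the nearest grid point at acceptable cost, and then apply~\eqref{eq:au} at each grid point. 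Alternatively, and more cleanly, one can bound each $J_\eps(n)$ in $L^q$ directly: write $J_\eps(n)=\eps^{4/3}\sum_{j=nM}^{(n+1)M-1}\tilde a(x_\eps(nM),y(j))$; since $x_\eps(nM)$ is $\sigma(y(0),\dots,y(nM-1))$-measurable, one cannot literally pull it out, but on $B_\eps$ one can dominate $|J_\eps(n)|\le \sup_{|u|\le Q}\eps^{4/3}|\sum_{j=0}^{M-1}\tilde a_u\circ T^j|\circ T^{nM}$ and then the grid argument gives $\big|\sup_{|u|\le Q}|\sum_{j=0}^{M-1}\tilde a_u\circ T^j|\big|_q=O(M^{1/2}Q)$ after controlling the modulus of continuity in $u$ via the Lipschitz bound.

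The main step is then a maximal inequality for $\sum_{n} J_\eps(n)$. Here the simplest route is brute force: $\big|1_{B_\eps}\max_{k}|\sum_{n=0}^{k-1}J_\eps(n)|\big|_q \le \sum_{n=0}^{[\eps^{-2/3}]-1}\big|1_{B_\eps}|J_\eps(n)|\big|_q \le [\eps^{-2/3}]\cdot O(\eps^{2/3}Q)=O(Q)=O((-\log\eps)^{1/2})$. This is too weak by a factor $\eps^{-1/3}$; the triangle inequality over $\eps^{-2/3}$ blocks loses the square-root cancellation. To recover it, I would instead group the blocks into a second layer: note $\eps^{2/3}\sum_{n=0}^{[t\eps^{-2/3}]-1}J_\eps(n)$ is again a sum over $\eps^{-2/3}$ terms each of conditional mean zero up to the dependence of the base point on the past, so the correct bound is $\big|1_{B_\eps}\sup_t|D_\eps(t)|\big|_q = \eps^{2/3}\cdot O((\eps^{-2/3})^{1/2}\cdot \eps^{2/3}Q)=O(\eps^{1/3}Q)=O(\eps^{1/3}(-\log\eps)^{1/4})$, matching the claim. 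Making this rigorous requires either a Doob/Burkholder-type maximal inequality for the (almost-)martingale $\sum J_\eps(n)$, or a further blocking argument mirroring Proposition~\ref{prop:Z}: split the $n$-sum into $\eps^{-1/3}$ super-blocks of length $\eps^{-1/3}$, estimate each super-block in $L^q$ by $O(\eps^{2/3}\cdot(\eps^{-1/3})^{1/2}\cdot Q)$ using~\eqref{eq:au} at the frozen base point of that super-block (the base point varies within the super-block but only by $O(\eps^2\cdot\eps^{-1/3}M\cdot\ldots)$, which is negligible), and then sum $\eps^{-1/3}$ super-block contributions — but crucially bounding $\sup_t$ by the maximum over super-block endpoints plus one interior super-block, so the outer sum is again a maximum-of-partial-sums that we control by a further application of the same $L^q$ block estimate rather than a naive triangle inequality.

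\textbf{Main obstacle.} The delicate point is the coupling between the base point $x_\eps(nM)$ and the fast orbit $\{y(j)\}_{j\ge nM}$: condition~\eqref{eq:au} only gives cancellation for a \emph{fixed} $u$, so one must either (a) discretize $u\in[-Q,Q]$ and control the resulting modulus-of-continuity error — which is fine because $a_0$ is Lipschitz in $x$ and $x_\eps$ moves slowly, but forces the factor $Q=O((-\log\eps)^{1/2})$ and hence the $(-\log\eps)^{1/4}$ in the conclusion — or (b) exploit that $x_\eps(nM)$ is past-measurable to run a genuine martingale maximal inequality. I expect route (a), combined with the two-layer blocking to avoid losing the square-root, to be the cleanest, and the bookkeeping of how the grid spacing, the number of grid points, the block length $\eps^{-1/3}$, and the exponent $q$ interact is where the real work lies; the indicator $1_{B_\eps}$ is essential precisely to make the grid finite and the Lipschitz substitution uniform.
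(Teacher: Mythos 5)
Your key ideas --- the grid discretization of $[-Q,Q]$, the use of the Lipschitz property of $a_0$ together with \eqref{eq:au}, the role of $B_\eps$ in making the grid finite, and the logarithmic factor coming from $Q=O((-\log\eps)^{1/2})$ --- are all correct and are exactly the paper's ingredients. However, you dismiss the brute--force triangle inequality over the blocks $J_\eps(n)$ as ``too weak by a factor $\eps^{-1/3}$,'' and then propose a two--layer super--block scheme or a martingale maximal inequality to recover square--root cancellation across $n$. This is based on a miscounting. Since $D_\eps(t)=\eps^{2/3}\sum_{n<[t\eps^{-2/3}]}J_\eps(n)$, we have $\supI|D_\eps|\le\eps^{2/3}\sum_{n<[\eps^{-2/3}]}|J_\eps(n)|$, and the prefactor $\eps^{2/3}$ cancels the block count $[\eps^{-2/3}]$ exactly; the final $L^q$ bound therefore equals the per--block $L^q$ bound, with no further cancellation needed. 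The paper does precisely this triangle inequality, so the two--layer blocking and any Doob/Burkholder argument are unnecessary, and the ``main obstacle'' you identify is not an obstacle at all.

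The only nontrivial step is the per--block estimate itself, and here your sketch also has an arithmetic slip. Your ``cleaner'' route (b) claims $\bigl|\sup_{|u|\le Q}|\sum_{j=0}^{M-1}\tilde a_u\circ T^j|\bigr|_q=O(M^{1/2}Q)$, but a covering with grid spacing $\delta$ gives $|S|M^{1/2}+M\delta$ with $|S|\approx 2Q/\delta$, and balancing yields $O(Q^{1/2}M^{3/4})$, not $O(QM^{1/2})$. The paper's version is slightly different and simpler: bound $1_{B_\eps}|J_\eps(n)|\le\sum_{u\in S}|\tilde J_\eps(n,u)|+4\Lip a_0\,Q/|S|$, where the crucial point is that $\eps^{4/3}M\le 1$ kills the $M$ factor in the Lipschitz error so the error term is $O(Q/|S|)$ and not $O(MQ/|S|)$; then $\bigl|1_{B_\eps}|J_\eps(n)|\bigr|_q\ll|S|\eps^{2/3}+Q/|S|$, and choosing $|S|=[\eps^{-1/3}(-\log\eps)^{1/4}]$ gives $O(\eps^{1/3}(-\log\eps)^{1/4})$, which by the cancellation noted above is exactly the claimed bound for $\bigl|1_{B_\eps}\supI|D_\eps|\bigr|_q$. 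In short, the proof is considerably simpler than what you were aiming for: no martingale structure in $n$ is used, and no second blocking layer is required.
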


\begin{proof}
For $u\in\R$ fixed, define
\[
\tilde J_\eps(n,u)(y_0)
=\eps^{\frac43}
\!\!
\sum_{j=nM}^{(n+1)M-1}
\!\!
\tilde a(u,y(j))
=\eps^{\frac43}
\!\!
\sum_{j=nM}^{(n+1)M-1}
\!\!
\tilde a_u(T^jy_0),
\]
where $\tilde a_u(y)=\tilde a(u,y)$.
Note that $\tilde J_\eps(n,u)=\tilde J_\eps(0,u)\circ T^{nM}$.
By assumption~\eqref{eq:au},
\[
|\tilde J_\eps(n,u)|_q =|\tilde J_\eps(0,u)|_q \ll 
\eps^{\frac43}M^{\frac12}\le \eps^{\frac23},
\]
uniformly in $n$ and $u$.

We can choose a partition $S=S_\eps\subset[-Q,Q]$ of finite cardinality $|S|$ such that
$\dist(x,S)\le 2Q/|S|$ for all $x\in[-Q,Q]$.
For $x\in [-Q,Q]$, there exists $u_x\in S$ such that
for all $y\in\Lambda$,
\[
|\tilde a(x,y)-\tilde a(u_x,y)|\le 2\,\Lip\, a_0\, 2Q/|S|= 4\,\Lip\, a_0\, Q/|S|.
\]
It follows that
\[
1_{B_\eps}|J_\eps(n)-\tilde J_\eps(n,u_{x_\eps(nM)})|\le 4\,\Lip\, a_0\, Q/|S|,
\]
and hence
\[
1_{B_\eps}|J_\eps(n)|
 \le {\SMALL\sum_{u\in S}}|\tilde J_\eps(n,u)|+4\,\Lip\, a_0\, Q/|S|.
\]
Choosing $|S|=[\eps^{-\frac13}(-\log\eps)^\frac14]$, 
\begin{align*}
\big|1_{B_\eps}|J_\eps(n)|\big|_q
& 
 \le {\SMALL\sum_{u\in S}}|\tilde J_\eps(n,u)|_q+4\,\Lip\, a_0\, Q/|S|
 \\ & \ll |S|\eps^{\frac23}+(-\log\eps)^{\frac12}/|S|
\ll \eps^{\frac13}(-\log\eps)^{\frac14}.
\end{align*}
It follows that
\begin{align*}
\big|1_{B_\eps}\supI|D_\eps|\big|_q
\le \eps^\frac23\sum_{n=0}^{[\eps^{-\frac23}]-1}|1_{B_\eps}|J_\eps(n)|\big|_q
\ll \eps^{\frac13}(-\log\eps)^{\frac14},
\end{align*}
as required.
\end{proof}

\begin{cor} \label{cor:DEeps}
$\pi_1(W_\eps+D_\eps+E_\eps,W)\le 
C( \eps^r+\eps^{\frac13\frac{q}{q+1}} (-\log\eps)^{\frac{q}{4}})$.
\end{cor}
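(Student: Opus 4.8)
The plan is to derive the estimate from three facts already in hand: the hypothesis $\pi_1(W_\eps,W)=O(\eps^r)$, the $L^q$ bound $\big|\supI|E_\eps|\big|_q\le C\eps^{\frac13}$ from Proposition~\ref{prop:prelim}, and the truncated $L^q$ bound $\big|1_{B_\eps}\supI|D_\eps|\big|_q\le C\eps^{\frac13}(-\log\eps)^{\frac14}$ from Lemma~\ref{lem:Deps}, together with the metric property (triangle inequality) of $\pi_1$, which is used freely elsewhere in the paper. Since $W_\eps$, $D_\eps$ and $E_\eps$ are random elements of $C[0,1]$ on the common probability space $(\Lambda,\mu)$, I first write
\[
\pi_1(W_\eps+D_\eps+E_\eps,W)\le \pi_1(W_\eps+D_\eps+E_\eps,W_\eps)+\pi_1(W_\eps,W),
\]
so that the second term is $O(\eps^r)$ and it remains to bound the first term by $C\eps^{\frac13\frac{q}{q+1}}(-\log\eps)^{\frac q4}$ up to an additive $O(\eps^r)$.

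For the first term I would control $\supI|D_\eps+E_\eps|$ separately on $B_\eps$ and on its complement. On $B_\eps^c$ I use Lemma~\ref{lem:Q}, which gives $\mu(B_\eps^c)\le C\eps^r$. On $B_\eps$, combining the two $L^q$ estimates gives $\big|1_{B_\eps}\supI|D_\eps+E_\eps|\big|_q\le C\eps^{\frac13}(-\log\eps)^{\frac14}=:a_\eps$, so by Markov's inequality $\mu\big(1_{B_\eps}\supI|D_\eps+E_\eps|\ge\eps_0\big)\le \eps_0^{-q}a_\eps^{q}$ for every $\eps_0>0$. Choosing $\eps_0=a_\eps^{q/(q+1)}$ to balance the two sides yields
\[
\mu\big(\supI|D_\eps+E_\eps|\ge\eps_0\big)\le C\eps^r+\eps_0,
\]
and then Proposition~\ref{prop:pi}(a), applied with $Y=W_\eps+D_\eps+E_\eps$ and $Y'=W_\eps$, gives $\pi_1(W_\eps+D_\eps+E_\eps,W_\eps)\le C\eps^r+\eps_0$. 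Since $\eps_0=C\eps^{\frac13\frac{q}{q+1}}(-\log\eps)^{\frac{q}{4(q+1)}}$ and $-\log\eps\ge1$ for small $\eps$, we have $\eps_0\le C\eps^{\frac13\frac{q}{q+1}}(-\log\eps)^{\frac q4}$, which gives the claimed bound; for $\eps$ bounded away from $0$ the inequality is trivial since $\pi_1\le1$.

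There is no genuine difficulty here — all the work has already been done in the preceding lemmas — and the only point needing attention is the bookkeeping in the truncation step. One cannot replace the bad-set bound of Lemma~\ref{lem:Q} by an $L^q$ bound for $D_\eps$ on all of $\Lambda$: the reduction of $J_\eps(n)$ to finitely many shifts $\tilde J_\eps(n,u)$ in Lemma~\ref{lem:Deps} rests on $x_\eps$ taking values in the bounded interval $[-Q,Q]$. After committing to that truncation one simply optimizes the Markov threshold $\eps_0$; note that this actually produces the exponent $\frac{q}{4(q+1)}$ on $-\log\eps$, which is better than the $\frac q4$ stated, so the corollary as phrased is mildly wasteful but correct.
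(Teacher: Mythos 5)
Your proof is correct and follows essentially the same route as the paper's: combine the $L^q$ bounds from Proposition~\ref{prop:prelim} and Lemma~\ref{lem:Deps}, apply Markov's inequality, account for the bad set $B_\eps^c$ via Lemma~\ref{lem:Q}, and then pass to $\pi_1$ using Proposition~\ref{prop:pi}(a) and the triangle inequality with the hypothesis $\pi_1(W_\eps,W)=O(\eps^r)$. Your observation that balancing the Markov threshold at $\eps_0=a_\eps^{q/(q+1)}$ yields the sharper log exponent $\frac{q}{4(q+1)}$ is also correct; the paper instead takes the simpler threshold $\eps_0=\eps^{\frac13\frac{q}{q+1}}$, which produces the weaker but sufficient log power $\frac{q}{4}$ stated in the corollary.
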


\begin{proof}
By Proposition~\ref{prop:prelim} and
Lemma~\ref{lem:Deps},
$\big|\supI|1_{B_\eps}D_\eps+E_\eps|\big|_q\ll 
\eps^{\frac13}(-\log\eps)^{\frac14}$.  Hence by Markov's inequality,
\[
\mu(\supI|1_{B_\eps}D_\eps+E_\eps|>\eps^{\frac13\frac{q}{q+1}})\ll 
\eps^{\frac13\frac{q}{q+1}} (-\log\eps)^{\frac{q}{4}}.
\]
Combining this with Lemma~\ref{lem:Q}, we obtain
\[
\mu(\supI|D_\eps+E_\eps|>\eps^{\frac13\frac{q}{q+1}})\ll 
\eps^r+ \eps^{\frac13\frac{q}{q+1}} 
(-\log\eps)^{\frac{q}{4}}.
\]
By Proposition~\ref{prop:pi}(a),
\[
\pi_1(W_\eps+D_\eps+E_\eps,W_\eps)\ll 
\eps^r+ \eps^{\frac13\frac{q}{q+1}}
(-\log\eps)^{\frac{q}{4}}.
\]
Now combine this estimate with the assumption $\pi_1(W_\eps,W)=O(\eps^r)$.
\end{proof}

\begin{pfof}{Theorem~\ref{thm:b=1}}
Consider the functional 
$\cG:C[0,1]\to[0,1]$ given by \mbox{$\cG(u)=v$} where $v$ is the unique solution to
$v(t)=\xi+\int_0^t \bar a(v(s))\,ds+u(t)$.
Since $\bar a$ is globally Lipschitz, it follows from existence and uniqueness of solutions for ordinary differential equations that $\cG$ is well-defined.  By Gronwall's inequality, $\cG$ is Lipschitz with $\Lip\,\cG\le e^{\Lip\, \bar a}$.

By definition, $X(t)=\xi+\int_0^t\bar a(X(s))\,ds+W(t)$, so $X=\cG(W)$.
By Proposition~\ref{prop:prelim}, $\hat x_\eps=\cG(W_\eps+D_\eps+E_\eps)$.
Hence by
the Lipschitz mapping theorem~\cite[Theorem~3.2]{Whitt74}, 
\[
\pi_1(\hat x_\eps,X)=\pi_1(\cG(W_\eps+D_\eps+E_\eps),\cG(W))\le e^{\Lip\, \bar a}
\pi_1(W_\eps+D_\eps+E_\eps,W).
\]
 Now apply Corollary~\ref{cor:DEeps}.
\end{pfof}

\subsection{The general case}
\label{sec:gen}

Let $b=1/\psi'$ and write
$z_\eps(n)=\psi(x_\eps(n))$,
$\hat z_\eps(t)=\psi(\hat x_\eps(t))$.
Define
\[
\bar A(z)=
\psi'(\psi^{-1}(z))\bar a(\psi^{-1}(z))
+\frac12 \psi''(\psi^{-1}(z))b(\psi^{-1}(z))^2\int_\Lambda v^2\,d\mu.
\]

\begin{lemma} \label{lem:Z}
$\pi_1(\hat z_\eps,Z)\le C(
\eps^r+\eps^{\frac13\frac{q}{q+1}}(-\log\eps)^{\frac{q}{4}})$
where $dZ=\bar A(Z)\,dt+dW$ and $Z(0)=\psi(\xi)$.
\end{lemma}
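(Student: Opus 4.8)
The plan is to reduce to the case $b\equiv1$, treated in Theorem~\ref{thm:b=1}, via the Lamperti-type change of variables $z=\psi(x)$ with $\psi'=1/b$. Since $b$ is $C^2$, nonvanishing (hence of constant sign), with $b,b',b'',1/b\in L^\infty$, the map $\psi$ is a $C^3$ diffeomorphism of $\R$ with $\psi',\psi'',\psi'''\in L^\infty$; moreover $(\psi^{-1})'=b\circ\psi^{-1}$ is bounded, so $\psi$ and $\psi^{-1}$ are globally Lipschitz.

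First I would write down the recursion for $z_\eps(n)=\psi(x_\eps(n))$. With $\Delta_n=x_\eps(n+1)-x_\eps(n)=\eps^2a_\eps(x_\eps(n),y(n))+\eps\,b(x_\eps(n))v(y(n))=O(\eps)$, a second-order Taylor expansion of $\psi$ together with the identity $\psi'b\equiv1$ gives
\[
z_\eps(n+1)=z_\eps(n)+\eps^2A_\eps(z_\eps(n),y(n))+\eps\,v(y(n)),
\]
\[
A_\eps(z,y)=\psi'(w)a_\eps(w,y)+\tfrac12\psi''(w)b(w)^2v(y)^2+\eps\,R_\eps(z,y),\qquad w=\psi^{-1}(z),
\]
where $R_\eps$ collects the cubic Taylor remainder and the lower-order terms of $\tfrac12\psi''\Delta_n^2$ and is bounded uniformly in $\eps$ (all relevant derivatives of $\psi$ and the data $a_\eps,v$ being bounded). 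Thus the $z$-system has exactly the form~\eqref{eq:fastslow} with $b\equiv1$, the same noise observable $v$, slow drift $A_\eps$, and initial condition $z_\eps(0)=\psi(\xi)$. Since the last term of $A_\eps$ vanishes at $\eps=0$, one has $\int_\Lambda A_0(z,\cdot)\,d\mu=\bar A(z)$ with $\bar A$ as in the statement, and $A_0(z,\cdot)-\bar A(z)=\psi'(w)\tilde a_{w}+\tfrac12\psi''(w)b(w)^2(v^2-\int_\Lambda v^2\,d\mu)$ with $w=\psi^{-1}(z)$.

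Next I would verify the hypotheses of Theorem~\ref{thm:b=1} for this $z$-system. Conditions~(i) and~(ii) of Section~\ref{subsec:rates} are immediate, using $\sup_\eps|a_\eps|_\infty<\infty$, $v\in L^\infty$, $|R_\eps|_\infty=O(1)$, and $|a_\eps-a_0|_\infty=O(\eps^{1/3})$; condition~(iii), hence global Lipschitzness of $\bar A$, holds because $\psi^{-1}$, $\psi'\circ\psi^{-1}$ and $(\psi''b^2)\circ\psi^{-1}$ are bounded Lipschitz, $a_0$ is Lipschitz in $x$ uniformly in $y$, and $|v|^2\le|v|_\infty^2$. The moment bound~\eqref{eq:au} for the $z$-system follows from the triangle inequality, from~\eqref{eq:au} applied with $u=\psi^{-1}(z)$, and from~\eqref{eq:v2}, uniformly in $z$ since the constants there do not depend on the base point; this is the one place where~\eqref{eq:v2} enters. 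Finally $v\in L^\infty$ and $\pi_1(W_\eps,W)=O(\eps^r)$ are in force. Theorem~\ref{thm:b=1} then gives $\pi_1(\hat z_\eps^{\mathrm{lin}},Z)\le C(\eps^r+\eps^{\frac13\frac{q}{q+1}}(-\log\eps)^{q/4})$, where $\hat z_\eps^{\mathrm{lin}}$ is the piecewise-linear interpolant of $n\mapsto z_\eps(n)$ and $Z$ solves $dZ=\bar A(Z)\,dt+dW$, $Z(0)=\psi(\xi)$.

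It remains to replace $\hat z_\eps^{\mathrm{lin}}$ by $\hat z_\eps=\psi\circ\hat x_\eps$. These coincide at the grid points $t\in\eps^2\Z$, and on each grid interval $\hat x_\eps$ moves by $|\Delta_n|=O(\eps)$, so, $\psi$ being Lipschitz, $\supI|\hat z_\eps-\hat z_\eps^{\mathrm{lin}}|=O(\eps)$ almost surely; Proposition~\ref{prop:pi}(a) then gives $\pi_1(\hat z_\eps,\hat z_\eps^{\mathrm{lin}})=O(\eps)$, which is absorbed into the stated bound. I expect the main obstacle to be the bookkeeping of the change of variables: one must confirm that the discrete-scheme second-order correction $\tfrac12\psi''\Delta_n^2$ produces precisely the It\^o-type drift $\tfrac12\psi''b^2\int_\Lambda v^2\,d\mu$ appearing in $\bar A$ (its fluctuating part $v^2-\int_\Lambda v^2\,d\mu$ being controlled by~\eqref{eq:v2}), and that all the error contributions are genuinely $O(\eps^3)$ per step, hence $O(\eps)$ after summing $O(\eps^{-2})$ of them; the remaining verifications are routine.
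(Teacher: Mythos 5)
Your proof is correct and is essentially the paper's proof: same Lamperti change of variables $z=\psi(x)$, same Taylor-expansion derivation of the recursion with drift $A_\eps$, same verification that $A_\eps$ inherits the regularity conditions (i)--(iii) and that~\eqref{eq:v2} supplies condition~\eqref{eq:au} for $A_0$, followed by an application of Theorem~\ref{thm:b=1}. The only place you go beyond the paper's exposition is in explicitly bounding the $O(\eps)$ supnorm gap between $\hat z_\eps=\psi\circ\hat x_\eps$ and the piecewise-linear interpolant of $n\mapsto z_\eps(n)$ (to which Theorem~\ref{thm:b=1} directly applies), a small step the paper leaves implicit.
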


\begin{proof}
The assumptions on $b$ ensure that $\psi$ is $C^3$ uniformly on $\R$.  
A calculation as in~\cite{GM13b,KKM18}, using the Taylor expansion of $\psi$, yields
\begin{align*}
& z_\eps(n+1)=z_\eps(n)+\eps^2 A_\eps(z_\eps(n),y(n))+\eps v(y(n)), \quad z_\eps(0)=\psi(\xi), 
\end{align*}
where
\[
A_\eps(z,y)=\psi'(\psi^{-1}(z))a_\eps(\psi^{-1}(z),y)
+{\SMALL\frac12} \psi''(\psi^{-1}(z))b(\psi^{-1}(z))^2v(y)^2+O(\eps),
\]
uniformly in $z$, $y$.
By the inverse function theorem, $(\psi^{-1})'=b\circ\psi^{-1}\in L^\infty$ and hence $\psi^{-1}$ is uniformly Lipschitz on $\R$.
It follows easily that $A_\eps$ inherits the regularity conditions (i)--(iii) from $a$.  By~\eqref{eq:v2},~$A_0$ inherits condition~\eqref{eq:au} from $a_0$.
Hence we can apply Theorem~\ref{thm:b=1}.~
\end{proof}

\begin{pfof}{Theorem~\ref{thm:fastslow}}
As in~\cite{GM13b}, a calculation using the definition of $X$ in~\eqref{eq:XSDE}
shows that $Z=\psi(X)$ satisfies the SDE in Lemma~\ref{lem:Z}.
The functional $\chi:C[0,1]\to C[0,1]$, $u\mapsto \psi^{-1}\circ u$, is Lipschitz with $\Lip\,\chi=\Lip\,\psi^{-1}$, so
by the Lipschitz mapping theorem, 
\[
\pi_1(\hat x_\eps,X)
=\pi_1(\chi(\hat z_\eps),\chi(Z))
\ll \pi_1(\hat z_\eps,Z).
\]
Now apply Lemma~\ref{lem:Z}.
\end{pfof}

\section{Nonuniformly hyperbolic transformations}
\label{sec:NUH}

In this section, we show how the main results in Section~\ref{sec:main} extend from nonuniformly expanding maps to transformations that are
nonuniformly hyperbolic 
in the sense of Young~\cite{Young98,Young99}.  We focus on the parts necessary for this paper, referring to~\cite{Young98,Young99} for further details.
(In particular, we do not restrict to systems with physical measures even though this is the case for most of the examples.)

Let $(\Lambda,d)$ be a bounded metric space with Borel probability measure $\mu$ and let $T:\Lambda\to\Lambda$ be an ergodic measure-preserving transformation.

\begin{defn} \label{def:NUH}
Let $p\ge1$, $\eta\in(0,1]$.
The transformation $T:\Lambda\to\Lambda$ is a {\em nonuniformly hyperbolic transformation of order $p$} if there exists a nonuniformly expanding map $\bar f:\bar\Delta\to\bar\Delta$ of order $p$ (with ergodic invariant probability measure $\bar\mu$) such that
\begin{itemize}
\item[(a)] $T:\Lambda\to\Lambda$ and $\bar f:\bar\Delta\to\bar\Delta$ have a common extension $f:\Delta\to\Delta$ with semiconjugacies
$\pi_\Delta:\Delta\to\Lambda$, $\bar\pi:\Delta\to\bar\Delta$,
\item[(b)]
There exists $\eta'\in(0,1]$ such that
for any $v\in C^\eta(\Lambda)$, there exists $\bar v\in C^{\eta'}(\bar\Delta)$ and $\chi\in L^\infty(\Delta)$ such that 
\[
v\circ\pi_\Delta=\bar v\circ\bar\pi+\psi\circ f-\psi.
\]
\item[(c)]  There is a constant $C>0$ such that for all $v\in C^\eta(\Lambda)$,
\[
\|\bar v\|_{\eta'} \le C\|v\|_\eta, \quad\text{and}\quad
|\psi|_\infty \le C\|v\|_\eta.
\]
\end{itemize}
\end{defn}

\begin{rmk}  (a) By~\cite[Propositions~5.3 and~5.4]{KKM18},
nonuniformly hyperbolic transformations modelled by Young towers with exponential tails~\cite{Young98} 
are nonuniformly hyperbolic of order $p$ (in the sense of Definition~\ref{def:NUH}) for all $p$.
Also, transformations that are modelled by Young tails with polynomial tails~\cite{Young99} are
nonuniformly hyperbolic of order $p$ provided (i) the inducing time in~\cite{Young99} lies in $L^p$, and (ii) the transformation $T$ contracts exponentially along stable leaves (see~\cite[Remark~5.1]{KKM18}).
\\[.75ex]
(b)
By~\cite{MV16}, the conclusions in Lemma~\ref{lem:CLT} hold for transformations that are nonuniformly hyperbolic of order $p\ge2$ and observables $v\in C^\eta_0(\Lambda)$.  This result does not require condition~(ii) from part~(a) of this remark.
\end{rmk}

Now define $W_n,W\in C[0,1]$ as in Section~\ref{sec:main}.  

\begin{thm} \label{thm:WIP_NUH}
Let $T:\Lambda\to\Lambda$ be nonuniformly hyperbolic of order \mbox{$p>2$} and
suppose that $v\in C^\eta_0(\Lambda)$.  
Then there is a constant $C>0$ such that
 $\pi_1(W_n,W)\le C n^{-r(p)}$ for all $n\ge1$,
where $r(p)=\frac{p-2}{4p}$.
\end{thm}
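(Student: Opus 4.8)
The plan is to bootstrap Theorem~\ref{thm:WIP} along the semiconjugacy furnished by Definition~\ref{def:NUH}. By that definition there is a nonuniformly expanding map $\bar f:\bar\Delta\to\bar\Delta$ of order $p$, a common extension $f:\Delta\to\Delta$ with measure-preserving semiconjugacies $\pi_\Delta:\Delta\to\Lambda$ and $\bar\pi:\Delta\to\bar\Delta$, and, for the given $v\in C^\eta_0(\Lambda)$, a decomposition $v\circ\pi_\Delta=\bar v\circ\bar\pi+\psi\circ f-\psi$ with $\bar v\in C^{\eta'}(\bar\Delta)$, $\psi\in L^\infty(\Delta)$, $\|\bar v\|_{\eta'}\le C\|v\|_\eta$ and $|\psi|_\infty\le C\|v\|_\eta$. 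First I would integrate this identity over $(\Delta,\mu_\Delta)$: since $f$, $\pi_\Delta$, $\bar\pi$ are measure preserving (so $\pi_{\Delta*}\mu_\Delta=\mu$ and $\bar\pi_*\mu_\Delta=\bar\mu$), the coboundary contributes nothing and $\int_{\bar\Delta}\bar v\,d\bar\mu=\int_\Lambda v\,d\mu=0$; hence $\bar v\in C^{\eta'}_0(\bar\Delta)$ and Theorem~\ref{thm:WIP} applies to $(\bar f,\bar v)$, with $\|\bar v\|_{\eta'}$ absorbed into the constant via $\|\bar v\|_{\eta'}\le C\|v\|_\eta$.

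Next I would telescope the cocycle relation: iterating along $f$ and using $\bar\pi\circ f^{\,i}=\bar f^{\,i}\circ\bar\pi$ gives, for every $0\le j\le n$,
\[
v_j\circ\pi_\Delta=\bar v_j\circ\bar\pi+\psi\circ f^{\,j}-\psi,
\qquad \bar v_j=\textstyle\sum_{i=0}^{j-1}\bar v\circ\bar f^{\,i}.
\]
Dividing by $n^{1/2}$, evaluating at the grid points $t=j/n$, and interpolating linearly (which commutes with the sum), one obtains on $(\Delta,\mu_\Delta)$ the almost sure bound
\[
\supI\bigl|W_n\circ\pi_\Delta-\bar W_n\circ\bar\pi\bigr|\le 2n^{-1/2}|\psi|_\infty\le 2Cn^{-1/2}\|v\|_\eta,
\]
where $\bar W_n\in C[0,1]$ is the process built from $\bar v$ as in Section~\ref{sec:main}. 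By Proposition~\ref{prop:pi}(a) (or directly from the definition of $\pi_1$) this gives $\pi_1(W_n\circ\pi_\Delta,\bar W_n\circ\bar\pi)\ll n^{-1/2}$, and since $\pi_1$ depends only on the laws of its arguments while $W_n\circ\pi_\Delta=_d W_n$ on $(\Lambda,\mu)$ and $\bar W_n\circ\bar\pi=_d\bar W_n$ on $(\bar\Delta,\bar\mu)$, I obtain $\pi_1(W_n,\bar W_n)\ll n^{-1/2}$. A parallel computation — square the identity $v_n\circ\pi_\Delta=\bar v_n\circ\bar\pi+\psi\circ f^{\,n}-\psi$, integrate over $(\Delta,\mu_\Delta)$, and use $|\psi\circ f^{\,n}-\psi|_\infty\le 2|\psi|_\infty$ together with $|\bar v_n|_{L^2(\bar\mu)}\ll n^{1/2}$ (from Lemma~\ref{lem:CLT} applied to $\bar f$) — shows $\sigma^2=\bar\sigma^2$, the variance attached to $(\bar f,\bar v)$, so the limiting Brownian motion $\bar W$ of Theorem~\ref{thm:WIP} for $\bar f$ equals $W$ in law.

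Finally I would invoke Theorem~\ref{thm:WIP} for $\bar f$, which gives $\pi_1(\bar W_n,\bar W)\le Cn^{-r(p)}$, and combine with the triangle inequality for $\pi_1$:
\[
\pi_1(W_n,W)\le\pi_1(W_n,\bar W_n)+\pi_1(\bar W_n,\bar W)\ll n^{-1/2}+n^{-r(p)}\ll n^{-r(p)},
\]
since $r(p)=\frac{p-2}{4p}<\frac12$. The argument is a soft reduction and I do not expect a genuine obstacle; the two points needing care are that $n^{-1/2}(\psi\circ f^{\,n}-\psi)$ is uniformly (in $t$ and on $\Delta$) of size $O(n^{-1/2})$ — which is precisely where the $L^\infty$ bound on $\psi$ in Definition~\ref{def:NUH}, rather than merely an $L^p$ bound, is used — and the bookkeeping identifying the laws of $W_n,\bar W_n$ and of $W,\bar W$ through the semiconjugacies $\pi_\Delta$ and $\bar\pi$.
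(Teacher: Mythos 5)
Your proposal is correct and follows essentially the same route as the paper: pull $W_n$ back to $\Delta$, compare in supremum norm with $\overline W_n\circ\bar\pi$ using the $L^\infty$ coboundary $\psi$, pass to $\pi_1$ via Proposition~\ref{prop:pi}(a), transfer through the semiconjugacies (which preserve the laws), and then invoke Theorem~\ref{thm:WIP} for $\bar f$. The only cosmetic difference is that you spell out two facts the paper leaves implicit --- that $\int_{\bar\Delta}\bar v\,d\bar\mu=0$ and that $\sigma^2=\bar\sigma^2$ --- and you record the slightly cleaner bound $\pi_1(W_n,\overline W_n)\ll n^{-1/2}$ in place of the paper's $n^{-(1/2-\delta)}$; neither changes the argument.
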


\begin{proof}
Define $\bW_n$ using the observable $\bar v$ from Definition~\ref{def:NUH}(b).
By Definition~\ref{def:NUH}(b),
 $|W_n\circ\pi_\Delta-\bW_n\circ\bar\pi|_\infty\le 2n^{-1/2}|\psi|_\infty$.
Hence by Proposition~\ref{prop:pi}(a), 
$\pi_1(W_n\circ\pi_\Delta,\bW_n\circ\bar\pi)\ll n^{-(\frac12-\delta)}$ for all $\delta>0$.
In particular, $\pi_1(W_n\circ\pi_\Delta,\bW_n\circ\bar\pi)\ll n^{-r(p)}$.
Since $\pi_\Delta:\Delta\to\Lambda$ and $\bar\pi:\Delta\to\bar\Delta$ are semiconjugacies, 
$\pi_1(W_n,\bW_n)\ll n^{-r(p)}$.

Since $\bar f$ is nonuniformly expanding of order $p$ and
$\bar v\in C^{\eta'}_0(\bar\Delta)$, it follows from
Theorem~\ref{thm:WIP} that
 $\pi_1(\bW_n,W)\ll n^{-r(p)}$.
This completes the proof.
\end{proof}

Next, consider a fast-slow system~\eqref{eq:fastslow} satisfying the regularity conditions in Section~\ref{subsec:rates}.
Define $\hat x_\eps,\,X\in C[0,1]$ as in Section~\ref{subsec:rates}.

\begin{thm} \label{thm:fs_NUH}
Let $T:\Lambda\to\Lambda$ be nonuniformly hyperbolic of order $p>2$. 
Suppose further that $v\in C^\eta_0(\Lambda)$ and that
$\sup_{x\in\R}|a_0(x,\cdot)|_\eta<\infty$.  
Then there is a constant $C>0$ such that
\[
\pi_1(\hat x_\eps,X)\le \begin{cases} 
C\eps^{\frac{p-2}{2p}} & p\le p_* \\
C\eps^{\frac13\frac{2p-2}{2p-1}}(-\log\eps)^{\frac12(p-1)} & p>p_*
\end{cases},
\]
where $p_*=\frac14(11+\sqrt{73})\approx 4.89$.
\end{thm}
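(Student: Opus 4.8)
The plan is to reduce Theorem~\ref{thm:fs_NUH} to the nonuniformly expanding case (Theorem~\ref{thm:NUEfs}) via the martingale-coboundary splitting provided by Definition~\ref{def:NUH}, exactly in the spirit of the proof of Theorem~\ref{thm:WIP_NUH}. First I would use Definition~\ref{def:NUH}(b) to write $v\circ\pi_\Delta=\bar v\circ\bar\pi+\psi\circ f-\psi$ with $\bar v\in C^{\eta'}_0(\bar\Delta)$ and $\psi\in L^\infty(\Delta)$, so that on the extension $\Delta$ the fast-slow system driven by $T$ is, up to a controllable coboundary error, the fast-slow system driven by the nonuniformly expanding factor $\bar f:\bar\Delta\to\bar\Delta$ with observable $\bar v$. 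The coboundary telescopes: summing $\psi\circ f^j-\psi$ from $j=0$ to $[t\eps^{-2}]-1$ gives $\psi\circ f^{[t\eps^{-2}]}-\psi$, which has sup-norm at most $2|\psi|_\infty\ll\|v\|_\eta$, so after multiplying by $\eps$ it contributes $O(\eps)$ uniformly on $[0,1]$. Hence $|\bar W_\eps\circ\bar\pi-W_\eps\circ\pi_\Delta|_\infty=O(\eps)$, where $\bar W_\eps$ is built from $\bar v$.

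Second, I would handle the slow variable. The fast-slow recursion~\eqref{eq:fastslow} on $(\Lambda,\mu)$ lifts to $\Delta$ via $\pi_\Delta$; one replaces the $\eps v(y(n))$ term by $\eps\bar v(\bar\pi f^n\cdot)+\eps(\psi\circ f^{n+1}-\psi\circ f^n)$. The coboundary pieces, when fed into the Euler scheme, again telescope and produce an $O(\eps)$ perturbation of the whole trajectory $\hat x_\eps$; more care is needed because the coboundary enters multiplied by $b(x_\eps(n))$ rather than as a free additive term, but since $b$ and $b'$ are bounded a summation-by-parts argument (as in~\cite{GM13b,KKM18}) converts $\eps\sum_j b(x_\eps(j))(\psi\circ f^{j+1}-\psi\circ f^j)$ into $\eps\cdot b(x_\eps(\cdot))\psi\circ f^{\cdot}$ plus $\eps\sum_j (b(x_\eps(j+1))-b(x_\eps(j)))\psi\circ f^{j+1}$, and the increment $b(x_\eps(j+1))-b(x_\eps(j))=O(\eps)$ pointwise, so the whole sum is $O(\eps)$ uniformly. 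One also needs the moment conditions~\eqref{eq:au}--\eqref{eq:v2} for the factor system: these hold for $\bar f$ with $q=2(p-1)$ by Proposition~\ref{prop:moment} applied on $\bar\Delta$, since $\bar v$, $\bar v^2-\int\bar v^2\,d\bar\mu$ and the centred lifts of $a_0$ all lie in $C^{\eta'}_0(\bar\Delta)$ with norms controlled via Definition~\ref{def:NUH}(c). Thus the system on $\bar\Delta$ satisfies all hypotheses of Theorem~\ref{thm:NUEfs}, giving the stated bound for the $\bar\Delta$-trajectory $\hat{\bar x}_\eps$ and its limit $X$.

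Third, I would assemble the estimate. Writing $\hat x_\eps\circ\pi_\Delta$ and $\hat{\bar x}_\eps\circ\bar\pi$ for the two lifted trajectories, the two preceding paragraphs give $|\sup_{[0,1]}|\hat x_\eps\circ\pi_\Delta-\hat{\bar x}_\eps\circ\bar\pi||_\infty=O(\eps)$ (all the discrepancies between the two Euler schemes are $O(\eps)$ in sup-norm, uniformly). By Proposition~\ref{prop:pi}(a) this yields $\pi_1(\hat x_\eps\circ\pi_\Delta,\hat{\bar x}_\eps\circ\bar\pi)=O(\eps)$, and since $\eps\ll\eps^{(p-2)/(2p)}\le\eps^{r(p)}$ (indeed $\tfrac{p-2}{2p}<1$), this is absorbed into the target rate; because $\pi_\Delta$ and $\bar\pi$ are semiconjugacies, $\pi_1(\hat x_\eps,\hat{\bar x}_\eps)$ satisfies the same bound. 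Combining with Theorem~\ref{thm:NUEfs} applied to $\bar f$ (whose output rate, with $p_*=\tfrac14(11+\sqrt{73})$, is precisely the claimed one) via the triangle inequality for $\pi_1$ completes the proof.

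The main obstacle I expect is the bookkeeping for the slow variable: verifying that feeding the coboundary $\psi\circ f^{j+1}-\psi\circ f^j$ through the nonlinear Euler recursion (where it is weighted by $b(x_\eps(j))$ and also enters the argument of $a_\eps$ at subsequent steps) genuinely produces only an $O(\eps)$ sup-norm perturbation and does not interact badly with the change-of-variables reduction $z_\eps=\psi(x_\eps)$ used in Section~\ref{sec:gen}. The cleanest route is probably to perform the coboundary removal \emph{before} the $z_\eps$-substitution, on the original recursion, using only boundedness of $\psi$, $b$, $b'$ and Lipschitzness of $a_\eps$ in $x$; then the reduction to $\bar f$ is literally an instance of Theorem~\ref{thm:NUEfs} with no further analytic input, and one only has to check that the hypotheses transfer, which is routine given Definition~\ref{def:NUH} and Proposition~\ref{prop:moment}.
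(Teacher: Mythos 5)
Your approach diverges from the paper's and introduces difficulties the paper deliberately avoids.  The paper's proof is a one-paragraph verification: it observes that the abstract result, Theorem~\ref{thm:fastslow}, is stated for an \emph{arbitrary} ergodic map $T:\Lambda\to\Lambda$ (no nonuniform expansion is assumed there), so it applies directly to the nonuniformly hyperbolic $T$ on $\Lambda$.  The rate hypothesis $\pi_1(W_\eps,W)=O(\eps^{(p-2)/(2p)})$ comes from Theorem~\ref{thm:WIP_NUH}, and the moment conditions~\eqref{eq:au}--\eqref{eq:v2} (which are just moment bounds on Birkhoff sums under $T$) transfer from $\bar f$ to $T$ by writing each observable as $\bar w\circ\bar\pi+\psi\circ f-\psi$ via Definition~\ref{def:NUH}(b,c), applying Proposition~\ref{prop:moment} to $\bar w$ on $\bar\Delta$, and absorbing the bounded coboundary.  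Crucially, the fast-slow \emph{system} itself never leaves $\Lambda$: only scalar Birkhoff-sum bounds are transferred.

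You instead propose to transfer the entire fast-slow recursion to $\bar\Delta$ and apply Theorem~\ref{thm:NUEfs} there, then compare trajectories.  This route has a genuine gap that you do not resolve.  To run a fast-slow system driven by $\bar f$ you must replace $a_\eps(x,y)$ (defined for $y\in\Lambda$) by some $\bar a_\eps(x,\bar y)$ on $\bar\Delta$.  Definition~\ref{def:NUH}(b) gives a decomposition of $a_0(x,\cdot)-\bar a(x)$ for each fixed $x$, but the operator $v\mapsto\bar v$ controls $\|\bar v\|_{\eta'}$ only through $\|v\|_\eta$, which includes the H\"older seminorm.  The regularity condition~(iii) is a sup-norm Lipschitz-in-$x$ bound, so $\|a_0(x,\cdot)-a_0(x',\cdot)\|_\eta$ need not be $O(|x-x'|)$, and hence there is no reason for the lifted $\bar a_0$ to be Lipschitz in $x$.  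Thus the hypotheses of Theorem~\ref{thm:NUEfs} are not verified on $\bar\Delta$.  A second (smaller, but real) issue is the feedback you mention: replacing $v(y(n))$ by $\bar v(\bar\pi f^n)+(\psi\circ f^{n+1}-\psi\circ f^n)$ changes $x_\eps(n)$, which then changes $a_\eps(x_\eps(\cdot),\cdot)$ and $b(x_\eps(\cdot))$ at later steps; the summation-by-parts you sketch handles only the direct coboundary contribution, not this feedback, and you would need a discrete Gronwall argument for the trajectory comparison.  Your first paragraph (comparing $W_\eps\circ\pi_\Delta$ with $\bar W_\eps\circ\bar\pi$) is correct and is essentially the proof of Theorem~\ref{thm:WIP_NUH}; the rest should be replaced by the direct verification of the hypotheses of Theorem~\ref{thm:fastslow}.
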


\begin{proof}
As in the proof of Theorem~\ref{thm:NUEfs}, it suffices to verify the hypotheses of Theorem~\ref{thm:fastslow}.
By Theorem~\ref{thm:WIP_NUH}, $\pi_1(W_\eps,W)=O(\eps^{\frac{p-2}{2p}})$.

Next, $v$ and $v^2-\int_\Lambda v^2\,d\mu$ lie in $C^\eta_0(\Lambda)$ so
conditions~\eqref{eq:vn} and~\eqref{eq:v2} follow from Proposition~\ref{prop:moment} and Definition~\ref{def:NUH}(b,c).  Also, $\tilde a_u\in C^\eta_0(\Lambda)$ uniformly in $u\in\R$, so~\eqref{eq:au} follows from Proposition~\ref{prop:moment} and Definition~\ref{def:NUH}(b,c). 
\end{proof}

\section{Examples}
\label{sec:ex}

In this section, we list some examples to which our results apply.
The simplest class of examples are uniformly expanding maps, which are nonuniformly expanding with $\tau=1$.  A specific example is the Gauss map $T:[0,1]\to[0,1]$ given by $Tx=\frac{1}{x}-[\frac{1}{x}]$.
Our results hold for all $p$, so we obtain rates
$n^{-(\frac14-\delta)}$ in the WIP and $\eps^{-(\frac13-\delta)}$ for homogenization.

Similarly for uniformly hyperbolic maps (including nontrivial basic sets for Axiom~A diffeomorphism) we can take $\tau=1$ and $p$ arbitrarily large.

More generally, the rates $n^{-(\frac14-\delta)}$ in the WIP and $\eps^{-(\frac13-\delta)}$ for homogenization hold provided $T$ is nonuniformly expanding/hyperbolic and $\tau\in L^p$ for all $p$.  In particular, this covers all systems that are modelled by Young towers with exponential tails~\cite{Young98},
including:
\begin{itemize}
\item Planar periodic dispersing billiards with finite horizon~\cite{Young98}
and infinite horizon~\cite{Chernov99}, as well as billiards with external forcing and corners~\cite{Chernov99,SimoiToth14}.
\item Unimodal maps $T:[-1,1]\to[-1,1]$ given by $Tx=1-a x^2$, $a\in[0,2]$ satisfying the Collet-Eckmann condition~\cite{ColletEckmann83}, namely there are constants $b,c>0$ such that $|(T^n)'(1)|\ge ce^{bn}$ for all $n\ge1$.
By~\cite{Jakobson81,BenedicksCarleson85}, this condition holds for a positive Lebesgue measure set of parameters $a$.
\item
H\'enon like attractors.  The H\'enon map $T:\R^2\to\R^2$ introduced in~\cite{Henon76}
 is given by
 $(x,y)=(1-ax^2+y,bx)$ where $a,b\in\R$.
 By~\cite{BenedicksCarleson91,BenedicksYoung00}, there is a positive measure set
 of parameters $a<2$, $b$ small, such that $T$ is modelled by a Young tower with exponential tails.   
\end{itemize}

Other class of examples for which these rates hold are Viana maps.
These maps, introduced in~\cite{Viana97}, comprise a $C^3$ open class of multidimensional nonuniformly expanding maps.  For definiteness, we restrict attention to maps on $M=S^1\times\R$.
Let $T_0:M\to M$ be the map
$T_0(\theta,y)=(\lambda\theta\bmod1,a_0+a\sin 2\pi\theta-y^2)$,
where 
$\lambda\in\N$ with $\lambda\ge16$, 
$a_0$ is chosen so that $0$ is a preperiodic point for the quadratic map $y\mapsto a_0-y^2$, and $a$ is sufficiently small.
It follows from~\cite{Alves00,AlvesViana02} that 
$C^3$ maps sufficiently close to $T_0$ are nonuniformly expanding of order $p$ for all $p$.  (In fact, they are modelled by Young towers with stretched exponential tails~\cite{Gouezel06a}.)

Finally, we mention that the intermittent maps~\eqref{eq:LSV}, with parameter $\gamma\in(0,\frac12)$, are nonuniformly expanding of order $p$ for any $p<\frac{1}{\gamma}$, yielding the rates 
\[
\pi_1(W_n,W)=O(n^{-(\frac14(1-2\gamma)-\delta)}),
\qquad
\pi_1(\hat x_\eps,X)\le \begin{cases} 
C\eps^{\frac12(1-2\gamma)-\delta} & \gamma \ge \gamma_* \\
C\eps^{\frac13\frac{2-2\gamma}{2-\gamma}-\delta} & \gamma \le \gamma_*
\end{cases},
\]
where $\gamma_*=\frac{1}{12}(11-\sqrt{73})$.
The classical solenoid construction of Smale and Williams~\cite{Smale67,Williams67} can be used as in~\cite{AlvesPinheiro08} to construct nonuniformly hyperbolic intermittent solenoids with these rates.

\paragraph{Acknowledgements}
This research was supported in part by a
European Advanced Grant {\em StochExtHomog} (ERC AdG 320977).

\end{document}